\newtheorem{thm}{Theorem}[section]
\newtheorem{lemma}[thm]{Lemma}
\newtheorem{proposition}[thm]{Proposition}
\newtheorem{definition}[thm]{Definition}
\newtheorem{notation}[thm]{Notation}
\newtheorem{recursionhyp}[thm]{Recursion Hypothesis}
\newcommand{\p}{\mathbb{P}}
\newcommand{\q}{\mathbb{Q}}
\newcommand{\R}{\mathbb{R}}
\newcommand{\s}{\mathbb{S}}
\newcommand{\ot}{\mathrm{ot}}
\newcommand{\cf}{\mathrm{cf}}
\newcommand{\cof}{\mathrm{cof}}
\newcommand{\dom}{\mathrm{dom}}
\newcommand{\add}{\textrm{Add}}
\newcommand{\even}{\mathrm{even}}
\newcommand{\odd}{\mathrm{odd}}
\newcommand{\col}{\mathrm{Col}}
\newcommand{\restrict}{\upharpoonright}
\begin{document}

\title{The Harrington-Shelah model with large continuum}

\author{Thomas Gilton and John Krueger}

\address{Thomas Gilton \\ Department of Mathematics \\
	University of California, Los Angeles\\
	Box 951555\\
	Los Angeles, CA 90095-1555}
\email{tdgilton@math.ucla.edu}

\address{John Krueger \\ Department of Mathematics \\ 
	University of North Texas \\
	1155 Union Circle \#311430 \\
	Denton, TX 76203}
\email{jkrueger@unt.edu}

\date{December 2017; revised December 2018}

\thanks{2010 \emph{Mathematics Subject Classification:} 
	Primary 03E35; Secondary 03E40.}

\thanks{\emph{Key words and phrases.} Forcing, stationary set reflection, mixed support forcing iteration.}

\thanks{The second author was partially supported by 
	the National Science Foundation Grant
	No. DMS-1464859.}

\begin{abstract}
We prove from the existence of a Mahlo cardinal the 
consistency of the 
statement that $2^\omega = \omega_3$ holds and 
every stationary subset of 
$\omega_2 \cap \cof(\omega)$ reflects to an ordinal 
less than $\omega_2$ with cofinality $\omega_1$. 
\end{abstract}

\maketitle

Let us say that \emph{stationary set reflection holds at $\omega_2$} if for any stationary 
set $S \subseteq \omega_2 \cap \cof(\omega)$ 
there is an ordinal $\alpha \in \omega_2 \cap \cof(\omega_1)$ 
such that $S \cap \alpha$ is stationary in $\alpha$ (that is, $S$ \emph{reflects} to $\alpha$). 
In a classic forcing construction, Harrington and Shelah \cite{HS} proved the 
equiconsistency of stationary set reflection at $\omega_2$ with the existence 
of a Mahlo cardinal. 
Specifically, if stationary set reflection holds at $\omega_2$, then $\Box_{\omega_1}$ fails, 
and hence $\omega_2$ is a Mahlo cardinal in $L$. 
Conversely, if $\kappa$ is a Mahlo cardinal, then the generic extension obtained by 
L\'evy collapsing $\kappa$ to become $\omega_2$ and then iterating to kill the stationarity 
of nonreflecting sets satisfies stationary set reflection at $\omega_2$. 
The Harrington-Shelah argument is notable because the majority of 
stationary set reflection principles are derived by extending large cardinal 
elementary embeddings, and thus use large cardinal principles much stronger 
than the existence of a Mahlo cardinal.

The original Harrington-Shelah model satisfies the generalized continuum hypothesis, and in particular, 
that $2^\omega = \omega_1$. 
Suppose we would like to obtain a model of stationary set reflection at $\omega_2$ 
together with $2^\omega = \omega_2$. 
A natural construction would be to iterate forcing 
with countable support of length a 
weakly compact cardinal $\kappa$, alternating between adding reals and 
collapsing $\omega_2$ to have size $\omega_1$. 
Such an iteration $\p$ would be proper, $\kappa$-c.c., collapse $\kappa$ to become 
$\omega_2$, and satisfy that $2^\omega = \omega_2$. 
The fact that stationary set reflection holds in any generic extension $V[G]$ by $\p$ 
follows from the ability to 
extend an elementary embedding $j$ with critical point $\kappa$ 
after forcing with the proper forcing $j(\p) / G$ over $V[G]$.

Consider the problem of obtaining a model satisfying stationary set reflection 
at $\omega_2$ together with $2^\omega > \omega_2$. 
Since in that case not all reals would be added by the iteration collapsing $\kappa$ to 
become $\omega_2$, extending the elementary embedding becomes more difficult. 
Indeed, in the model referred to in the previous paragraph, a stronger stationary set reflection 
principle holds, namely $\textsf{WRP}(\omega_2)$, which asserts that 
any stationary subset of $[ \omega_2 ]^\omega$ reflects to $[\beta]^\omega$ 
for some uncountable $\beta < \omega_2$, and 
by a result of Todor\u{c}evi\'c, $\textsf{WRP}(\omega_2)$ implies $2^\omega \le \omega_2$ 
(see \cite[Lemma 2.9]{boban}).

In this paper we demonstrate that the cardinality of the continuum provides a natural 
separation between ordinary stationary set reflection and higher order 
reflection principles such as $\textsf{WRP}(\omega_2)$. 
We prove that, in contrast to $\textsf{WRP}(\omega_2)$, 
stationary set reflection at $\omega_2$ is consistent with $2^\omega = \omega_3$. 
This result provides a natural variation of the Harrington-Shelah model with a large value of the continuum. 
Our argument adapts the method of mixed support forcing iterations into the context 
of iterating distributive forcings. 
We expect that the technicalities worked out in this paper 
will be applicable to a broad range of similar problems.

\bigskip

We assume that the reader is familiar with the basics of forcing and has had some 
exposure to iterated forcing and proper forcing. 
Other than assuming some general knowledge of these areas, the paper is self-contained.

In Section 1 we provide an abstract definition and development of the kind of mixed 
support forcing iteration we will use in the consistency result. 
This iteration combines adding Cohen reals together with adding club subsets of $\omega_2$, 
with finite support on the Cohen forcing and supports of size 
$\omega_1$ on the club adding forcing. 
This kind of mixed support forcing iteration is reminiscent of Mitchell's classic forcing for constructing a model 
in which there is no Aronszajn tree on $\omega_2$ \cite{mitchell}, 
as well as the term forcing 
analysis provided in Abraham's extension of Mitchell's result to two successive cardinals \cite{abraham}. 

The main challenge in proving our consistency result will be to verify that the forcing 
iteration preserves $\omega_1$ and $\omega_2$. 
In Section 2 we analyze the features of this kind of forcing iteration relevant 
to the issue of cardinal preservation. 
In Section 3 we put the pieces worked out in Sections 1 and 2 together to prove the consistency of stationary set 
reflection at $\omega_2$ together with $2^\omega = \omega_3$.

\section{Suitable Mixed Support Forcing Iterations}

In this section we introduce and develop the basic properties of 
the type of mixed support forcing iteration which we will use 
in the consistency result. 
This kind of iteration will alternate between adding Cohen subsets of 
$\omega$ and adding clubs disjoint from certain subsets of $\omega_2$. 
The support of a condition in such an iteration will be finite on the 
Cohen part and of size less than $\omega_2$ on the club adding part.

We let \emph{even} denote the class of even ordinals, 
and \emph{odd} the class of odd ordinals.

\begin{definition}
Let $\alpha \le \omega_3$. 
Let $\langle \p_\beta : \beta \le \alpha \rangle$ be a sequence of forcing posets and 
$\langle \dot S_\gamma : \gamma \in \alpha \cap \odd \rangle$ a sequence such that 
for all odd $\gamma < \alpha$, $\dot S_\gamma$ is a nice $\p_\gamma$-name for a subset 
of $\omega_2 \cap \cof(\omega)$. 
Assume that for all $\beta \le \alpha$, every member of $\p_\beta$ is a function 
whose domain is a subset of $\beta$, and define 
$$
\p_\beta^c := \{ p \in \p_\beta : \dom(p) \subseteq \even \}.
$$
We say that the sequence of forcing posets is a 
\emph{suitable mixed support forcing iteration of length $\alpha$ based on the sequence of names} 
if the following statements are satisfied:
\begin{enumerate}
	\item $\p_0 = \{ \emptyset \}$ is the trivial forcing;
	\item if $\gamma < \alpha$ is even, then $p \in \p_{\gamma+1}$ iff $p$ is a function 
	whose domain is a subset of $\gamma+1$ such that $p \restrict \gamma \in \p_\gamma$ and, 
	if $\gamma \in \dom(p)$, then $p(\gamma) \in \add(\omega)$;
	\item if $\gamma < \alpha$ is odd, then $p \in \p_{\gamma+1}$ iff 
	$p$ is a function 
	whose domain is a subset of $\gamma+1$ such that
	$p \restrict \gamma \in \p_\gamma$ and, 
	if $\gamma \in \dom(p)$, then $p(\gamma)$ is a nice $\p_\gamma^c$-name for a 
	nonempty closed and bounded subset of $\omega_2$ such that 
	$$
	p \restrict \gamma \Vdash_{\p_\gamma} 
	p(\gamma) \cap \dot S_\gamma = \emptyset;
	$$
	\item if $\delta \le \alpha$ is a limit ordinal, then $p \in \p_\delta$ iff 
	$p$ is a function 
	whose domain is a subset of $\delta$ such that
	$|\dom(p) \cap \even| < \omega$, $|\dom(p) \cap \odd| < \omega_2$, and for all $\beta < \delta$, 
	$p \restrict \beta \in \p_\beta$;
	\item for all $\beta \le \alpha$, $q \le p$ in $\p_\beta$ iff 
	$\dom(p) \subseteq \dom(q)$, 
	and for all $\gamma \in \dom(p)$, if $\gamma$ is even 
	then $p(\gamma) \subseteq q(\gamma)$, 
	and if $\gamma$ is odd then 
	$$
	q \restrict (\gamma \cap \even) \Vdash_{\p_\gamma^c} q(\gamma) \ \textrm{is an end-extension of} \ p(\gamma).
	$$
	\end{enumerate}
\end{definition}

The definition makes sense without assuming that the forcing iterations preserve cardinals, 
if we interpret $\omega_2$ in the definition as meaning  $\omega_2$ of the ground model. 
In any case, the only such forcing iterations we will consider in this paper will preserve $\omega_1$ and 
$\omega_2$, although cardinal preservation 
will not be verified until the end of the paper.

The requirement in (3) that $p(\gamma)$ is a nice $\p_\gamma^c$-name, rather than  
a $\p_\gamma$-name, is made in order 
to prove the following absoluteness result.

\begin{lemma}
	Let $M$ be a transitive model of $\textsf{ZFC} - \textsf{Powerset}$ with $\omega_2 \in M$ and 
	$M^{\omega_1} \subseteq M$. 
	Suppose that 
	$\langle \p_\beta : \beta \le \alpha \rangle$ 
	is a sequence of forcing posets in $M$ and 
	$\langle \dot S_\gamma : \gamma \in \alpha \cap \odd 
	\rangle$ is a sequence in $M$ so that for each odd 
	$\gamma \in \alpha$, $\dot S_\gamma$ is a nice 
	$\p_\gamma$-name for a subset of 
	$\omega_2 \cap \cof(\omega)$. 
	Then $\langle \p_\beta : \beta \le \alpha \rangle$ 
	is a suitable mixed support forcing iteration 
	based on the sequence of names 
	$\langle \dot S_\gamma : \gamma \in \alpha \cap \odd 
	\rangle$ iff 
	$M$ models that it is.
\end{lemma}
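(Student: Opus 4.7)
The plan is to verify that each of the six defining clauses is absolute between $M$ and $V$ under the stated hypotheses, carried out by simultaneous induction on $\beta \le \alpha$ so that at stage $\beta$ one may assume $\p_\gamma$ and $\p_\gamma^c$ denote the same sets in $M$ and in $V$ for all $\gamma < \beta$.

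Before beginning the induction I would establish the preliminary fact that $\omega_1^M = \omega_1$ and $\omega_2^M = \omega_2$. The former is immediate from $M^{\omega_1} \subseteq M$. For the latter, if $\omega_2^M < \omega_2$ then in $V$ there exists a bijection $f \colon \omega_1 \to \omega_2^M$; since $f$ is an $\omega_1$-sequence of ordinals, closure gives $f \in M$, contradicting the uncountability of $\omega_2^M$ in $M$. Consequently the symbol $\omega_2$ in the definition denotes the same ordinal in $M$ and in $V$.

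Clauses (1), (2), (4), and the ``$p(\gamma) \subseteq q(\gamma)$'' portion of clause (5) are then immediate: the trivial poset is absolute, $\add(\omega)$ is the absolute set of finite partial functions from $\omega$ to $2$, finiteness is absolute, and ``$|\dom(p) \cap \odd| < \omega_2$'' is absolute because any witnessing injection has length $\le \omega_1$ and so lies in $M$ iff it lies in $V$ by the closure hypothesis. This leaves the forcing-theoretic content of clauses (3) and (5).

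The substantive work is in verifying absoluteness of (i) the predicate ``$\dot\tau$ is a nice $\p_\gamma^c$-name for a subset of $\omega_2$'' and (ii) forcing statements of the form $p \restrict \gamma \Vdash_{\p_\gamma} \dot\tau \cap \dot S_\gamma = \emptyset$ and ``$q(\gamma)$ end-extends $p(\gamma)$''. Item (i) is a $\Delta_0$ property relative to the set $\p_\gamma^c \in M$: a nice name is a collection of pairs $(x, A)$ with $x \in \omega_2$ and $A$ an antichain in $\p_\gamma^c$, and ``antichain in a fixed set'' is absolute. The main obstacle is item (ii), which I would handle by appealing to the standard fact that for a fixed formula $\varphi$, $\Vdash_\p \varphi$ is $\Delta_1$-definable via a set recursion on the names together with a finite recursion on the structure of $\varphi$, with all quantifiers ranging over $\p$ itself; since $\p_\gamma$, $\dot S_\gamma$, and the relevant names all lie in $M$, and $M$ satisfies enough of $\textsf{ZFC}$ to execute the recursion, the forcing relations of $M$ and $V$ agree on the statements at issue. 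Combining (i) and (ii) closes the induction.
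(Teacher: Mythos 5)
Your overall strategy is exactly the one the paper intends: the authors omit the proof, describing it as a straightforward verification that each clause of Definition 1.1 is absolute between $M$ and $V$, and your treatment of the forcing relation (absoluteness of $\Vdash$ for $\Delta_0$ formulas via the definability recursion, all of whose quantifiers range over sets in $M$) and of the syntactic predicate ``nice $\p_\gamma^c$-name'' is the right way to discharge the matrix of each clause. The computation $\omega_1^M = \omega_1$, $\omega_2^M = \omega_2$ from $M^{\omega_1} \subseteq M$ is also needed and correctly done.

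However, there is one gap, and it is precisely the point the paper singles out as the substantive use of the closure hypothesis. Each clause of Definition 1.1 is a biconditional ``$p \in \p_{\gamma+1}$ iff $\Phi(p)$'' with an \emph{unbounded} quantifier over functions $p$. Your argument establishes that $\Phi$ is absolute for $p \in M$, which gives the direction ``$V$ models it $\Rightarrow$ $M$ models it'' and half of the converse. But for ``$M$ models it $\Rightarrow$ $V$ models it'' you must also show that any $p$ in $V$ satisfying $\Phi$ already lies in $M$; otherwise $p \notin \p_{\gamma+1}$ (as $\p_{\gamma+1} \subseteq M$ by transitivity) while $\Phi(p)$ holds, and $V$ would fail the clause. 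For clause (3) this requires an argument: one uses the inductive hypothesis together with Lemma 1.3 to see that $\p_\gamma^c$ is isomorphic to a Cohen poset and hence c.c.c., so a nice $\p_\gamma^c$-name for a (nonempty, closed and) \emph{bounded} subset of $\omega_2$ is a union of at most $\omega_1$ many sets $\{\check\xi\} \times A_\xi$ with each $A_\xi$ a countable antichain, i.e., a set of size at most $\omega_1$ consisting of elements of $M$; the hypothesis $M^{\omega_1} \subseteq M$ then places the name, and hence $p$, inside $M$. (A parallel but easier remark is needed for the limit clause (4), where $p$ has domain of size at most $\omega_1$ and all proper restrictions in $M$.) Your proposal verifies absoluteness of the predicates for objects of $M$ but never argues that the relevant objects of $V$ belong to $M$, so as written the right-to-left direction of the lemma is not established.
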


The proof, which we omit, is a straightforward verification 
that each property of Definition 1.1 is absolute between $M$ and $V$. 
The closure of $M$ is 
used to see that $M$ contains all names described in Definition 1.1(3) (see Lemma 1.3 below).

\bigskip

For the remainder of the section we fix a particular suitable mixed support forcing iteration 
$\langle \p_\beta : \beta \le \alpha \rangle$ based on a sequence of names 
$\langle \dot S_\gamma : \gamma \in \alpha \cap \odd \rangle$. 
For $\beta \le \alpha$, 
we will write $q \le_\beta p$ to mean that $q \le p$ in $\p_\beta$, and we will 
abbreviate $\Vdash_{\p_\beta}$ as $\Vdash_\beta$.

When $p$ is a condition in $\p_\beta$ and $\gamma < \beta$, for simplicity 
we will sometimes write 
$p(\gamma)$ without knowing whether or not $\gamma \in \dom(p)$; 
in the case that it is not, then $p(\gamma)$ means the empty set.

The proof of the next lemma is straightforward.

\begin{lemma}
	Let $\beta \le \alpha$. 
	The forcing poset $\p_\beta^c$ is a regular suborder of $\p_\beta$, and $\p_\beta^c$ is 
	isomorphic to $\add(\omega,\ot(\beta \cap \even))$.
\end{lemma}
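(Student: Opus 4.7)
The plan is to argue by induction on $\beta \le \alpha$, handling both claims simultaneously. For the isomorphism with $\add(\omega,\ot(\beta\cap\even))$, the base case $\beta=0$ is trivial. At a successor $\beta=\gamma+1$ with $\gamma$ odd, no condition in $\p_\beta^c$ has $\gamma$ in its domain, so $\p_\beta^c=\p_\gamma^c$ and $\ot(\beta\cap\even)=\ot(\gamma\cap\even)$, and the inductive hypothesis transfers. At a successor $\beta=\gamma+1$ with $\gamma$ even, a condition in $\p_\beta^c$ is determined by its restriction to $\gamma$, which lies in $\p_\gamma^c$, together with an optional Cohen condition at $\gamma$; this yields $\p_\beta^c \cong \p_\gamma^c \times \add(\omega)\cong \add(\omega,\ot(\gamma\cap\even)+1)=\add(\omega,\ot(\beta\cap\even))$. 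At a limit $\delta$, clause (4) of Definition 1.1 forces every $p\in\p_\delta^c$ to have finite domain contained in $\delta\cap\even$, so transporting the values of $p$ along the order-isomorphism $\delta\cap\even\to\ot(\delta\cap\even)$ yields an element of the finite-support Cohen forcing $\add(\omega,\ot(\delta\cap\even))$; the resulting bijection preserves the ordering by clause (5) applied at even coordinates.

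For the regular suborder claim I verify that (i) the orderings agree on $\p_\beta^c$, (ii) compatibility in $\p_\beta$ for elements of $\p_\beta^c$ implies compatibility in $\p_\beta^c$, and (iii) every $p\in\p_\beta$ has a reduct in $\p_\beta^c$. Part (i) is immediate from clause (5), since each $\gamma\in\dom(p)$ is even when $p\in\p_\beta^c$. For (ii), if $p,q\in\p_\beta^c$ and $r\in\p_\beta$ with $r\le_\beta p,q$, then $r\restrict\even$ lies in $\p_\beta^c$ and still extends both $p$ and $q$, since the extension requirements of clause (5) at even coordinates concern only the values on $\dom(p)\cup\dom(q)\subseteq\even$. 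For (iii), I set $p^c:=p\restrict\even$ and, given any $r\in\p_\beta^c$ with $r\le_\beta p^c$, define $s:=r\cup(p\restrict\odd)$. Then $s\le_\beta p$ and $s\le_\beta r$ follow from clause (5): at even $\gamma\in\dom(p)$ we have $s(\gamma)=r(\gamma)\supseteq p(\gamma)$, while at odd $\gamma\in\dom(p)$, $s(\gamma)=p(\gamma)$ is its own end-extension.

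The principal obstacle is verifying that $s$ actually lies in $\p_\beta$, which requires an inner induction on $\beta'\le\beta$ showing $s\restrict\beta'\in\p_{\beta'}$. At odd successor stages, clause (3) for $s\restrict\beta'$ follows from the analogous clause for $p\restrict\beta'$ via the fact that $s\restrict\gamma\le_\gamma p\restrict\gamma$ and forcing is preserved under extension; it is conceptually important here that $p(\gamma)$ was prescribed to be a $\p_\gamma^c$-name, so that swapping out the Cohen part of $p$ for $r$ does not invalidate the name or the clause (5) forcing statement. At the limit stage, the support bounds of clause (4) are met because $|\dom(s)\cap\even|=|\dom(r)|<\omega$ and $|\dom(s)\cap\odd|=|\dom(p)\cap\odd|<\omega_2$.
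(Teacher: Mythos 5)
Your proof is correct; the paper omits this proof entirely (``the proof of the next lemma is straightforward''), and your argument is the intended routine verification, with the key point being exactly the reduction $p \mapsto p\restrict\even$. Note that your witness $s = r \cup (p\restrict\odd)$ for the reduction property is precisely the operation $p + r$ that the paper later isolates in Notation 1.7 and Lemma 1.8, so your inner induction (including the observation that clause (3) survives because $p(\gamma)$ is a $\p_\gamma^c$-name and the forcing statement is preserved under extension of $p\restrict\gamma$) doubles as a proof of that lemma.
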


It follows that if $G$ is a generic filter on $\p_\beta$, then $G^c := G \cap \p_\beta^c$ 
is a generic filter on $\p_\beta^c$. 
Also, for any condition $q \in G$, $q \le_\beta (q \restrict \even)$ implies 
that $q \restrict \even \in G^c$. 
If $\dot x$ is a $\p_\beta^c$-name, then it is also a $\p_\beta$-name and $\dot x^G = \dot x^{G^c}$.

The next two lemmas state some basic facts about the forcing iteration. 
The proofs, which we omit, are straightforward.

\begin{lemma}
Let $\gamma < \beta \le \alpha$.
\begin{enumerate}
	\item $\p_\gamma \subseteq \p_\beta$, and for all $p \in \p_\beta$, 
	$p \restrict \gamma \in \p_\gamma$;
	\item if $p$ and $q$ are in $\p_\gamma$, then 
	$q \le_\gamma p$ iff $q \le_\beta p$;
	\item if $p \in \p_\gamma$, $r \in \p_\beta$, and $r \le_\beta p$, then 
	$r \restrict \gamma \le_\gamma p$;
	\item if $q \in \p_\beta$ and $r \le_\gamma q \restrict \gamma$, then 
	$r \cup q \restrict [\gamma,\beta)$ is in $\p_\beta$ and is $\le_\beta$-below 
	$r$ and $q$;
	\item $\p_\gamma$ is a regular suborder of $\p_\beta$.
	\end{enumerate}	
	\end{lemma}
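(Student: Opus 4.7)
The plan is to prove (1)--(4) by simultaneous induction on $\beta \le \alpha$ (with $\gamma < \beta$ arbitrary), and then to derive (5) from these. The cases divide naturally into $\beta = 0$ (vacuous), $\beta = \delta + 1$ a successor, and $\beta$ a limit. In the successor case, clauses (2) and (3) of Definition 1.1 describe a member of $\p_\beta$ as some $p \restrict \delta \in \p_\delta$ together with at most one additional coordinate $p(\delta)$ whose admissibility is governed solely by $\p_\delta$ and $\p_\delta^c$; in the limit case, clause (4) makes membership in $\p_\beta$ recursive in the $\p_\delta$ for $\delta < \beta$. The key underlying observation is that all data referenced by clause (5) at a coordinate $\epsilon$ --- namely the order of $\add(\omega)$ when $\epsilon$ is even, and the forcing relation of $\p_\epsilon^c$ when $\epsilon$ is odd --- is intrinsic to the single poset $\p_\epsilon^c$ and does not depend on whether we are computing inside $\p_\gamma$ or $\p_\beta$.

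Items (1) and (2) are then bookkeeping: for $p \in \p_\gamma$, the membership conditions for $p \in \p_\beta$ are either trivial (the cardinality bounds on $\dom(p) \subseteq \gamma$) or supplied by the inductive hypothesis ($p \restrict \delta \in \p_\delta$ for each $\delta < \beta$), and the restriction statement is analogous. Item (3) then follows because every coordinate in $\dom(p)$ witnessing $r \le_\beta p$ lies in $\gamma$, so discarding the part of $r$ above $\gamma$ preserves all the required inequalities. For item (4), the candidate $s := r \cup q \restrict [\gamma, \beta)$ has finite even section and odd section of size less than $\omega_2$, being a union of two such sets; its restriction to each $\delta < \beta$ is handled either by (1) (when $\delta \le \gamma$) or by the inductive hypothesis for (4) (when $\delta > \gamma$); and the inequalities $s \le_\beta r$ and $s \le_\beta q$ reduce to coordinate-wise checks, using that $s$ agrees with $r$ below $\gamma$ so the odd-coordinate forcing clauses witnessing $r \le_\gamma q \restrict \gamma$ transfer directly.

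Finally, (5) follows by the usual argument. Containment and order-inheritance come from (1) and (2). To establish that a maximal antichain $A \subseteq \p_\gamma$ remains maximal in $\p_\beta$, take any $r \in \p_\beta$, form $r \restrict \gamma \in \p_\gamma$ via (3), pick $p \in A$ compatible with $r \restrict \gamma$ via some common $\le_\gamma$-extension $s$, and apply (4) to obtain $s \cup r \restrict [\gamma, \beta)$ as a common $\le_\beta$-extension of $p$ and $r$. The only place that demands genuine care is the limit-case verification in (1) and (4) that the even and odd domain sections remain of the prescribed sizes, which is immediate from the finiteness of finite unions and the regularity of $\omega_2$ in the ground model; the remainder is a routine unfolding of Definition 1.1 clause by clause.
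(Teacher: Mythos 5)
The paper omits the proof of this lemma as a straightforward verification, and your simultaneous induction on $\beta$ through clauses (1)--(4), keyed on the observation that the ordering data at each coordinate $\epsilon$ depends only on $\p_\epsilon^c$ and hence is the same whether computed in $\p_\gamma$ or $\p_\beta$, followed by the standard reduction argument for (5), is exactly the intended verification. The argument is correct; the only point worth making explicit is that in (4), at an odd successor coordinate $\delta \in \dom(q)$, membership of $s \restrict (\delta+1)$ also requires $s \restrict \delta \Vdash_\delta q(\delta) \cap \dot S_\delta = \emptyset$, which follows from $s \restrict \delta \le_\delta q \restrict \delta$ (the ``below $q$'' half of your inductive hypothesis), so proving membership and the inequalities together, as you do, is essential.
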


\begin{lemma}
	Let $\beta \le \alpha$ and 
	$p$ and $q$ be in $\p_\beta$.
	\begin{enumerate}
		\item If $\beta$ is a limit ordinal, then $q \le_\beta p$ iff for all 
		$\gamma < \beta$, $q \restrict \gamma \le_\gamma p \restrict \gamma$;
		\item if $\beta = \gamma+1$, where $\gamma$ is even, then 
		$q \le_\beta p$ iff $q \restrict \gamma \le_\gamma p \restrict \gamma$ and 
		$p(\gamma) \subseteq q(\gamma)$;
		\item if $\beta = \gamma+1$, where $\gamma$ is odd, then 
		$q \le_\beta p$ iff $q \restrict \gamma \le_\gamma p \restrict \gamma$ 
		and $q \restrict (\gamma \cap \even)$ forces in $\p_\gamma^c$ that 
		$q(\gamma)$ is an end-extension of $p(\gamma)$.
		\end{enumerate}
	\end{lemma}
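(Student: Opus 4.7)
The plan is to verify each clause by directly unfolding Definition 1.1(5). The central observation is that Definition 1.1(5) describes $\le_\beta$ entirely coordinate-wise: $q \le_\beta p$ holds iff $\dom(p) \subseteq \dom(q)$ together with a condition at each $\delta \in \dom(p)$, where the condition at $\delta$ mentions only $p(\delta)$, $q(\delta)$, and (in the odd case) $q \restrict (\delta \cap \even)$ together with the poset $\p_\delta^c$. None of this data depends on the ambient $\beta$, so coordinate-wise comparisons transfer freely between the various $\p_\beta$.

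For clause (1), with $\beta$ a limit, the forward direction fixes $\gamma < \beta$; the containment $\dom(p \restrict \gamma) \subseteq \dom(q \restrict \gamma)$ is immediate from $\dom(p) \subseteq \dom(q)$, and the coordinate-wise conditions at each $\delta \in \dom(p) \cap \gamma$ witnessing $q \le_\beta p$ also witness $q \restrict \gamma \le_\gamma p \restrict \gamma$ by the observation above. For the reverse direction, given $\delta \in \dom(p)$ choose $\gamma := \delta + 1 < \beta$ (using that $\beta$ is a limit); the comparison $q \restrict \gamma \le_\gamma p \restrict \gamma$ then supplies both $\delta \in \dom(q)$ and the coordinate-wise condition at $\delta$.

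For clauses (2) and (3), with $\beta = \gamma + 1$, unfolding Definition 1.1(5) splits the requirement $q \le_\beta p$ into conditions at indices $\delta < \gamma$ and, when $\gamma \in \dom(p)$, a condition at $\delta = \gamma$. The collection of conditions at $\delta < \gamma$ says exactly $q \restrict \gamma \le_\gamma p \restrict \gamma$, and the condition at $\delta = \gamma$ reads $p(\gamma) \subseteq q(\gamma)$ when $\gamma$ is even and reads $q \restrict (\gamma \cap \even) \Vdash_{\p_\gamma^c} q(\gamma)$ is an end-extension of $p(\gamma)$ when $\gamma$ is odd, matching the extra clause in the respective statement. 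If instead $\gamma \notin \dom(p)$, the paper's convention that $p(\gamma) = \emptyset$ makes that extra clause trivially hold in either parity, and $\dom(p) \subseteq \gamma$ collapses the bi-implication to $q \restrict \gamma \le_\gamma p \restrict \gamma$; again the stated equivalence holds.

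The main obstacle, if one may call it that, is purely notational bookkeeping: correctly handling the convention $p(\gamma) = \emptyset$ when $\gamma \notin \dom(p)$ in the successor cases, and confirming in the limit case that the coordinate-wise domain inclusions ranging over all $\gamma < \beta$ indeed assemble into $\dom(p) \subseteq \dom(q)$, which uses only that every $\delta \in \dom(p) \subseteq \beta$ satisfies $\delta + 1 < \beta$. No induction or nontrivial structural fact is needed, consistent with the paper's claim that the proof is straightforward and can be omitted.
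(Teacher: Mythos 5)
Your proof is correct and is exactly the straightforward coordinate-wise unfolding of Definition~1.1(5) that the paper has in mind when it omits the argument; the key observation that the condition at each coordinate $\delta$ depends only on $p(\delta)$, $q(\delta)$, $q\restrict(\delta\cap\even)$, and $\p_\delta^c$ (none of which involve the ambient $\beta$) is the right one, and your use of $\gamma=\delta+1$ in the limit case and of the $p(\gamma)=\emptyset$ convention in the successor cases is handled properly. Nothing further is needed.
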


\begin{notation}
	Let $\beta \le \alpha$. 
	For $p$ and $q$ in $\p_\beta$, let $q \le_\beta^* p$ mean that $q \le_\beta p$ 
	and $q \restrict \even = p \restrict \even$. 
	For $p$ and $q$ in $\p_\beta^c$, let $q \le_\beta^c p$ mean that 
	$q \le_\beta p$. 
	We will abbreviate the forcing poset $(\p_\beta,\le_\beta^*)$ as $\p_\beta^*$ and 
	$(\p_\beta^c,\le_\beta^c)$ as $\p_\beta^c$.
\end{notation}

Consider $p \in \p_\beta$ and $a \in \p_\beta^c$. 
Then $a$ and $p$ are compatible in $\p_\beta$ iff 
$a$ and $p \restrict \even$ are compatible in $\p_\beta^c$ iff 
for all even $\gamma \in \dom(p) \cap \dom(a)$, $p(\gamma)$ and $a(\gamma)$ are 
compatible in $\add(\omega)$, that is, $p(\gamma) \cup a(\gamma)$ 
is a function.

\begin{notation}
	Let $\beta \le \alpha$. 
	If $a \in \p_\beta^c$ and $p \in \p_\beta$, and $a$ and $p$ are compatible in $\p_\beta$, 
	let $p + a$ denote the function $s$ such that $\dom(s) := \dom(a) \cup \dom(p)$, 
	for all even $\gamma \in \dom(s)$, $s(\gamma) := a(\gamma) \cup p(\gamma)$, and for all 
	odd $\gamma \in \dom(s)$, $s(\gamma) := p(\gamma)$.
\end{notation}

The proofs of the next four lemmas are straightforward.

\begin{lemma}
	Let $\beta \le \alpha$. 
	If $a \in \p_\beta^c$ and $p \in \p_\beta$, and $a$ and $p$ are compatible in $\p_\beta$, then 
	$p + a$ is in $\p_\beta$ and $p + a \le_\beta p, a$. 
	Moreover, $p + a$ is the greatest lower bound of $p$ and $a$.
	\end{lemma}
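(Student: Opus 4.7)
The plan is to verify three things: that $p+a \in \p_\beta$, that $p+a$ is a common lower bound of $p$ and $a$, and that $p+a$ is the $\le_\beta$-greatest such lower bound. For membership in $\p_\beta$, I would proceed by induction on $\beta \le \alpha$, checking each clause of Definition 1.1. At a limit $\beta$ the only real check is the support condition: $\dom(p+a) \cap \even = (\dom(p) \cap \even) \cup \dom(a)$ is finite since $\dom(a)$ is itself finite (by Lemma 1.3, $\p_\beta^c$ is isomorphic to the finite-support Cohen forcing $\add(\omega, \ot(\beta \cap \even))$), and $\dom(p+a) \cap \odd = \dom(p) \cap \odd$ has size less than $\omega_2$ because $a$ contributes no odd coordinates. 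At an even successor $\gamma+1$ the new value $a(\gamma) \cup p(\gamma)$ is a function by the compatibility hypothesis and hence a condition in $\add(\omega)$. At an odd successor $\gamma+1$ the value is simply $p(\gamma)$, which is already a nice $\p_\gamma^c$-name; using $(p+a) \restrict \gamma \le_\gamma p \restrict \gamma$ from the inductive step verified next, it is still forced to be disjoint from $\dot S_\gamma$.

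For the lower-bound claim, I would invoke Lemma 1.5 to reduce $p+a \le_\beta p$ and $p+a \le_\beta a$ to coordinate-wise checks. The domain inclusions are immediate from the definition. At each even $\gamma$ in $\dom(p) \cup \dom(a)$, both $p(\gamma)$ and $a(\gamma)$ are contained in $(p+a)(\gamma) = p(\gamma) \cup a(\gamma)$. At each odd $\gamma \in \dom(p)$ we have $(p+a)(\gamma) = p(\gamma)$, so the end-extension requirement is trivial; and $\dom(a)$ contains no odd ordinals, so no further condition arises for $p+a \le_\beta a$.

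For the greatest-lower-bound claim, suppose $r \le_\beta p$ and $r \le_\beta a$. Then $\dom(r) \supseteq \dom(p) \cup \dom(a) = \dom(p+a)$. At even $\gamma \in \dom(p+a)$, the containments $p(\gamma) \subseteq r(\gamma)$ and $a(\gamma) \subseteq r(\gamma)$ give $(p+a)(\gamma) \subseteq r(\gamma)$; at odd $\gamma \in \dom(p+a)$ the restriction $r \restrict (\gamma \cap \even)$ already forces $r(\gamma)$ to end-extend $p(\gamma) = (p+a)(\gamma)$ over $\p_\gamma^c$. Hence $r \le_\beta p+a$. I do not anticipate any serious obstacle; the argument is a direct unwinding of Definition 1.1 and Lemma 1.5, and the only point worth watching is that odd-coordinate names pass through the $+$ operation unchanged, so the nice-name clause and disjointness from $\dot S_\gamma$ transfer automatically from $p$ to $p+a$.
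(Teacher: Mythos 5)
Your proof is correct, and it is exactly the straightforward unwinding of Definition 1.1 and Lemma 1.5 that the paper has in mind when it omits the proof of this lemma as routine; the simultaneous induction on membership and on the inequality $(p+a)\restrict\gamma \le_\gamma p\restrict\gamma$ (needed to transfer the disjointness from $\dot S_\gamma$ at odd coordinates) is handled properly, as is the point that odd coordinates pass through $+$ unchanged.
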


\begin{lemma}
	Let $\beta \le \alpha$. 
	Let $p \in \p_\beta$ and $a \in \p_\beta^c$. 
	Let $G$ be a generic filter on $\p_\beta$. 
	If $p$ and $a$ are both in $G$, then so is $p + a$.
\end{lemma}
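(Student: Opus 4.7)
The plan is to use the filter property of $G$ together with the previous lemma (which identifies $p+a$ as the greatest lower bound of $p$ and $a$) to conclude that $p+a$ lies in $G$.

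First I would observe that since $G$ is a filter and $p, a \in G$, there exists $r \in G$ with $r \le_\beta p$ and $r \le_\beta a$. The existence of such a common lower bound in $\p_\beta$ forces compatibility in the concrete sense required by Notation 1.6: namely, for every even $\gamma \in \dom(p) \cap \dom(a)$, the function $r(\gamma)$ extends both $p(\gamma)$ and $a(\gamma)$, so $p(\gamma) \cup a(\gamma)$ is itself a function. This shows $p$ and $a$ are compatible in $\p_\beta$, so the expression $p + a$ is defined.

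Next I would invoke Lemma 1.7, which guarantees that $p + a \in \p_\beta$ and is in fact the greatest lower bound of $p$ and $a$ in $\p_\beta$. Since $r \le_\beta p$ and $r \le_\beta a$, the greatest lower bound property yields $r \le_\beta p + a$. Finally, since $r \in G$ and $G$ is upward closed, we conclude $p + a \in G$.

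The proof is essentially a one-line application of the filter axioms once Lemma 1.7 is in hand, and no step poses a real obstacle; the only thing one needs to be careful about is confirming that the semantic compatibility of $p$ and $a$ witnessed by the common lower bound $r$ implies the syntactic compatibility required to form $p + a$, and this is immediate from clause (5) of Definition 1.1 applied at each even coordinate in $\dom(p) \cap \dom(a)$.
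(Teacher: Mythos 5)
Your proof is correct and is exactly the straightforward argument the paper has in mind (the paper omits the proof, declaring it straightforward): a common lower bound $r \in G$ of $p$ and $a$ witnesses their compatibility, and the greatest-lower-bound property of $p+a$ together with upward closure of $G$ finishes it. The only quibble is a numbering slip: the greatest-lower-bound statement is Lemma 1.8 (and the definition of $p+a$ is Notation 1.7), not Lemma 1.7.
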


\begin{lemma}
	Let $\beta \le \alpha$. 
	\begin{enumerate}
		\item For all $p \in \p_\beta$, $p \le_\beta p \restrict \even$;
		\item if $q \le_\beta p$ then $q \restrict \even \le_\beta^c p \restrict \even$;
		\item if $q \le_\beta^* p$, $a \in \p_\beta^c$, and $a$ and $p$ are compatible in $\p_\beta$, 
		then $a$ and $q$ are compatible in $\p_\beta$ and $q + a \le_\beta p + a$.
		\end{enumerate}
	\end{lemma}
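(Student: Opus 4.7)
The plan is to verify each of the three clauses directly from Definition 1.1(5) together with Notations 1.5 and 1.6, with Lemma 1.7 handling the one bookkeeping step in clause (3). All three statements are transparent once one keeps track of which coordinates are even and which are odd.

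For clause (1), note that $p \restrict \even$ lies in $\p_\beta^c$ and hence in $\p_\beta$ by Lemma 1.3. Its domain is $\dom(p) \cap \even \subseteq \dom(p)$, consisting only of even ordinals, so the odd-coordinate clause of Definition 1.1(5) is vacuous. On each even $\gamma \in \dom(p \restrict \even)$ we have $(p\restrict\even)(\gamma) = p(\gamma) \subseteq p(\gamma)$, so the even-coordinate clause holds trivially, giving $p \le_\beta p \restrict \even$.

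For clause (2), assume $q \le_\beta p$. Both $q \restrict \even$ and $p \restrict \even$ are in $\p_\beta^c$, and by definition $\le_\beta^c$ is just the restriction of $\le_\beta$, so it suffices to check $q \restrict \even \le_\beta p \restrict \even$. The inclusion $\dom(p \restrict \even) = \dom(p) \cap \even \subseteq \dom(q) \cap \even = \dom(q \restrict \even)$ is immediate from $\dom(p) \subseteq \dom(q)$, for each even $\gamma$ in the former we have $(p\restrict\even)(\gamma)=p(\gamma) \subseteq q(\gamma)=(q\restrict\even)(\gamma)$, and again there are no odd coordinates to consider.

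For clause (3), assume $q \le_\beta^* p$, so $q \restrict \even = p \restrict \even$, and suppose $a \in \p_\beta^c$ is compatible with $p$. Using the characterization just before Notation 1.6, this compatibility means that $a(\gamma) \cup p(\gamma)$ is a function for every even $\gamma \in \dom(a) \cap \dom(p)$; replacing $p$ by $q$ changes nothing on even coordinates, so $a$ is compatible with $q$ as well. To show $q+a \le_\beta p+a$, compare domains: $\dom(p+a) = \dom(p) \cup \dom(a) \subseteq \dom(q) \cup \dom(a) = \dom(q+a)$. On each even coordinate $\gamma$ in $\dom(p+a)$ the values of $p+a$ and $q+a$ coincide because $q \restrict \even = p \restrict \even$, so the even clause is trivial. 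The only nontrivial point is the odd clause: for odd $\gamma \in \dom(p+a)$ we have $\gamma \in \dom(p)$ (since $\dom(a) \subseteq \even$), and $(p+a)(\gamma)=p(\gamma)$, $(q+a)(\gamma)=q(\gamma)$, so we must show
$$
(q+a)\restrict(\gamma \cap \even) \Vdash_{\p_\gamma^c} q(\gamma) \text{ end-extends } p(\gamma).
$$
By hypothesis $q \le_\beta p$ this forcing statement already holds below $q \restrict (\gamma \cap \even)$, so it suffices to observe that $(q+a)\restrict(\gamma \cap \even)$ equals $(q \restrict (\gamma \cap \even)) + (a \restrict \gamma)$ as functions, and Lemma 1.7 (applied inside $\p_\gamma^c$, using that $a \restrict \gamma$ and $q \restrict (\gamma \cap \even) = p\restrict(\gamma\cap\even)$ are compatible there) gives $(q+a)\restrict(\gamma\cap\even) \le_\gamma^c q \restrict (\gamma \cap \even)$. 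The main potential obstacle is this last step — confirming that restricting the $+$ operation commutes with the restriction to an initial segment and remains a $\p_\gamma^c$-extension — but once that identity is written out it is immediate.
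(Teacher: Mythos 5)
Your proof is correct; the paper omits the proof of this lemma as one of four ``straightforward'' verifications, and your direct unwinding of Definition 1.1(5) clause by clause --- including the one genuinely nontrivial check, that $(q+a)\restrict(\gamma \cap \even) = (q\restrict(\gamma\cap\even)) + (a\restrict\gamma)$ is a $\p_\gamma^c$-extension of $q\restrict(\gamma\cap\even)$, so the end-extension statement forced below the latter persists --- is exactly the intended argument. (One bookkeeping slip: the fact $p+a \le_\beta p, a$ that you invoke is Lemma 1.8 in the paper's numbering, not 1.7, and the compatibility characterization sits just before Notation 1.7.)
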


\begin{lemma}
	Let $\beta \le \alpha$. 
	Suppose that $b \le_\beta^c a$ and $q \le_\beta p$, 
	where $a$ and $p$ are compatible in $\p_\beta$ 
	and $b$ and $q$ are compatible in $\p_\beta$. 
	Then $q + b \le_\beta p + a$.
\end{lemma}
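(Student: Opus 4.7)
My plan is to reduce this to the greatest-lower-bound characterization already established in Lemma 1.7, rather than verifying the three clauses of the ordering definition by hand. The key observation is that both $p+a$ and $q+b$ are defined as meets in $\p_\beta$, so the only thing to check is that $q+b$ lies below $p$ and below $a$.

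First I would apply Lemma 1.7 twice: since $a$ is compatible with $p$ we get $p+a \le_\beta p, a$, and the same lemma says that $p+a$ is in fact the greatest lower bound of $p$ and $a$ in $\p_\beta$; similarly, since $b$ is compatible with $q$, $q+b \le_\beta q, b$. Now I want to show $q+b$ lies below $p$ and below $a$ in $\p_\beta$. For $p$: chain $q + b \le_\beta q \le_\beta p$ by transitivity, using the hypothesis $q \le_\beta p$. For $a$: chain $q + b \le_\beta b \le_\beta a$, where the second inequality is the hypothesis $b \le_\beta^c a$, which by Lemma 1.4(2) (together with the Notation defining $\le_\beta^c$) is the same as $b \le_\beta a$.

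Having shown that $q+b$ is a common lower bound of $p$ and $a$ in $\p_\beta$, the greatest-lower-bound clause of Lemma 1.7 immediately yields $q+b \le_\beta p+a$, which is the conclusion.

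I do not expect any real obstacle: the argument is purely formal, and all the work has been packed into Lemma 1.7 (which identifies $p+a$ as a meet) and Lemma 1.4(2) (which lets us freely pass between $\le_\beta^c$ and $\le_\beta$). The only minor care needed is to make sure the compatibility hypotheses in the statement are exactly what is required to invoke Lemma 1.7 for both pairs $(p,a)$ and $(q,b)$, which they are.
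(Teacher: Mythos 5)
Your proof is correct; the paper omits this proof entirely (it is one of the four lemmas declared ``straightforward''), and your reduction to the greatest-lower-bound property --- $q+b \le_\beta q \le_\beta p$ and $q+b \le_\beta b \le_\beta a$, hence $q+b$ is below the meet $p+a$ --- is exactly the kind of argument intended. Two small bookkeeping points: the greatest-lower-bound statement is Lemma 1.8, not 1.7 (1.7 is the Notation defining $p+a$), and the appeal to Lemma 1.4(2) is superfluous, since Notation 1.6 defines $\le_\beta^c$ outright as the restriction of $\le_\beta$ to $\p_\beta^c$, so $b \le_\beta^c a$ already gives $b \le_\beta a$ with no further argument.
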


\begin{lemma}
	Let $\beta \le \alpha$, $q \in \p_\beta$, $\dot x$ a $\p_\beta^c$-name, and $\varphi(x)$ a 
	$\Delta_0$-formula. Then 
	$$
	q \Vdash_\beta \varphi(\dot x) \ \ \textrm{iff} \ \ (q \restrict \even) \Vdash_{\p_\beta^c} 
	\varphi(\dot x).
	$$
\end{lemma}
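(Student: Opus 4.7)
The plan is to prove both directions semantically, using that $\p_\beta^c$ is a regular suborder of $\p_\beta$ (Lemma 1.3) combined with $\Delta_0$-absoluteness between the intermediate generic extension and the full generic extension. Throughout, for a $\p_\beta$-generic $G$ I write $G^c := G \cap \p_\beta^c$, which is $\p_\beta^c$-generic over $V$, and I use that because $\dot x$ is a $\p_\beta^c$-name, $\dot x^G = \dot x^{G^c}$ lies in $V[G^c]$. Since $V[G^c] \subseteq V[G]$ are transitive and $\varphi$ is $\Delta_0$, one has $V[G^c] \models \varphi(\dot x^{G^c})$ iff $V[G] \models \varphi(\dot x^G)$.

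For the $(\Leftarrow)$ direction, I would first show that $q \restrict \even \Vdash_\beta \varphi(\dot x)$. Given $G$ a $\p_\beta$-generic with $q \restrict \even \in G$, one has $q \restrict \even \in G^c$, so by the assumption $V[G^c] \models \varphi(\dot x^{G^c})$, and the absoluteness observation above yields $V[G] \models \varphi(\dot x^G)$. Since $q \le_\beta q \restrict \even$ by Lemma 1.9(1), this gives $q \Vdash_\beta \varphi(\dot x)$.

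For $(\Rightarrow)$, I would argue by contradiction. Suppose $q \Vdash_\beta \varphi(\dot x)$ but some $a \le_\beta^c q \restrict \even$ forces $\neg \varphi(\dot x)$ in $\p_\beta^c$. The conditions $a$ and $q$ are then compatible in $\p_\beta$: their domains can only overlap at even coordinates $\gamma$, where $a(\gamma) \supseteq q(\gamma)$ as elements of $\add(\omega)$, so the values unite into a function. By Lemma 1.7, $q + a \in \p_\beta$ extends both $q$ and $a$. Choose $G$ a $\p_\beta$-generic containing $q + a$; then $q \in G$ gives $V[G] \models \varphi(\dot x^G)$, while Lemma 1.8 applied to $a$ and $q + a$ yields $a \in G$ and hence $a \in G^c$. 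Absoluteness then delivers $V[G^c] \models \varphi(\dot x^{G^c})$, contradicting the choice of $a$.

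The only technical point is the compatibility check in the forward direction, which is immediate from $a \le_\beta^c q \restrict \even$ and the fact that $a$ has no odd coordinates; the rest is routine bookkeeping around the equation $\dot x^G = \dot x^{G^c}$ and the role of $\Delta_0$ in transferring $\varphi$ across the inclusion $V[G^c] \subseteq V[G]$.
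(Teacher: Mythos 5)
Your proof is correct and follows essentially the same route as the paper's: both directions reduce to the identity $\dot x^G = \dot x^{G^c}$ together with $\Delta_0$-absoluteness between the transitive models $V[G^c] \subseteq V[G]$, and your forward direction, though framed as a contradiction, is just the contrapositive of the paper's argument that no $b \le_\beta^c q \restrict \even$ can force $\neg\varphi(\dot x)$ because a generic containing $q+b$ puts $q$ in $G$ and $b$ in $G^c$. The only blemishes are citation offsets: $q \le_\beta q \restrict \even$ is Lemma 1.10(1) rather than 1.9(1), and the fact that $q+a$ is a condition below both $q$ and $a$ is Lemma 1.8 rather than 1.7.
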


\begin{proof}
	For the backwards implication, 
	assume that 
	$q \restrict \even$ forces in $\p_\beta^c$ 
	that $\varphi(\dot x)$ holds. 
	If $G$ is a generic filter on $\p_\beta$ which contains 
	$q$, then 
	$q \restrict \even \in G^c$ implies that $\dot x^{G^c} = \dot x^G$ satisfies $\varphi$ in 
	$V[G^c]$ and hence in $V[G]$. 
	For the forward implication, suppose that $q$ forces in $\p_\beta$ that $\varphi(\dot x)$ holds. 
	Consider any $b \le_\beta^c q \restrict \even$. 
	Fix a generic filter $G$ on 
	$\p_\beta$ which contains $q + b$, and let 
	$x := \dot x^G = \dot x^{G^c}$. 
	Since $q + b \le_\beta q$, $q \in G$, and therefore 
	$\varphi(x)$ holds in $V[G]$ and hence in $V[G^c]$. 
	But $q + b \le_\beta b$ implies that $b \in G \cap \p_\beta^c = G^c$. 
	Thus, $b$ does not force the negation of 
	$\varphi(\dot x)$. 
	Since $b$ was arbitrary, $q \restrict \even$ forces in $\p_\beta^c$ that 
	$\varphi(\dot x)$ holds.
\end{proof}

In particular, in Definition 1.1(5) the property 
$$
q \restrict (\gamma \cap \even) \Vdash_{\p_\gamma^c} q(\gamma) \ \textrm{is an end-extension of} \ p(\gamma)
$$
is equivalent to 
$$
q \restrict \gamma \Vdash_\gamma q(\gamma) \ \textrm{is an end-extension of} \ p(\gamma).
$$

The next technical proposition will be crucial to the arguments in Section 2.

\begin{proposition}
	Let $\beta \le \alpha$. 
	Suppose that $q \le_\beta p$. 
	Let $b := q \restrict \even$. 
	Then there exists $q' \in \p_\beta$ such that  
	$$
	q \le_\beta q' \le_\beta^* p
	$$
	and 
	$$
	q \le_\beta q' + b \le_\beta q.
	$$
\end{proposition}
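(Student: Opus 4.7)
The plan is to construct $q'$ explicitly, coordinate-by-coordinate, and then verify the claim by induction on $\gamma \le \beta$. Let
$$
\dom(q') := (\dom(p) \cap \even) \cup (\dom(q) \cap \odd),
$$
and set $q'(\delta) := p(\delta)$ at each even $\delta \in \dom(q')$. The only delicate case is at odd $\delta \in \dom(q')$. One cannot simply take $q'(\delta) := q(\delta)$: the end-extension clause in the definition of $\p_{\delta+1}$ would then require the even part $q' \restrict (\delta \cap \even) = p \restrict (\delta \cap \even)$ to force that $q(\delta)$ end-extends $p(\delta)$, yet this is only known below the strictly stronger condition $b \restrict \delta = q \restrict (\delta \cap \even)$, and an analogous problem arises when verifying $q' \restrict \delta \Vdash_\delta q'(\delta) \cap \dot S_\delta = \emptyset$. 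The fix is to take $q'(\delta)$ to be a \emph{mixed} nice $\p_\delta^c$-name, chosen so that $b \restrict \delta$ forces $q'(\delta) = q(\delta)$ while every condition in $\p_\delta^c$ incompatible with $b \restrict \delta$ forces $q'(\delta) = p(\delta)$. Such a name exists by a standard mixing construction in the Boolean completion of $\p_\delta^c$: for each $\alpha < \omega_2$, the Boolean value of $\alpha \in q'(\delta)$ is declared equal to $(\bar a \wedge \|\alpha \in q(\delta)\|) \vee (\bar a^\perp \wedge \|\alpha \in p(\delta)\|)$, where $\bar a$ denotes the Boolean value of $b \restrict \delta$, and one then passes to the associated nice name via maximal antichains.

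With $q'$ so defined, I would verify simultaneously by induction on $\gamma \le \beta$ the four statements: (i) $q' \restrict \gamma \in \p_\gamma$, (ii) $q' \restrict \gamma \le_\gamma^* p \restrict \gamma$, (iii) $q \restrict \gamma \le_\gamma q' \restrict \gamma$, and (iv) $q \restrict \gamma$ and $(q' \restrict \gamma) + (b \restrict \gamma)$ lie below each other in $\le_\gamma$. Limit steps follow from the inductive hypothesis together with the support bounds on $p$ and $q$, and successor steps at even $\gamma$ are immediate from $q'(\gamma) = p(\gamma)$. The real work is at an odd successor $\gamma = \delta + 1$: one checks in turn that $q'(\delta)$ is a nice $\p_\delta^c$-name for a nonempty closed and bounded subset of $\omega_2$ (immediate, since every condition of $\p_\delta^c$ forces $q'(\delta)$ to equal either $q(\delta)$ or $p(\delta)$, and both are of this form), that $q' \restrict \delta \Vdash_\delta q'(\delta) \cap \dot S_\delta = \emptyset$, and that both end-extension clauses (for $q' \le_\gamma^* p$ and for $q \le_\gamma q'$) hold. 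Each of these is verified by a case split: if the generic on $\p_\delta^c$ contains $b \restrict \delta$, then $q'(\delta) = q(\delta)$, and one uses inductive hypothesis (iv) to see that $q \restrict \delta$ is effectively in the generic, so the required facts for $q$ transfer; if the generic contains a condition incompatible with $b \restrict \delta$, then $q'(\delta) = p(\delta)$, and the required facts for $p$ suffice in view of (ii). Lemma 1.8 assists in moving between $\Vdash_{\p_\delta^c}$ and $\Vdash_\delta$ where relevant.

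Once all four properties hold at $\gamma = \beta$, the conclusion is immediate: (ii) gives $q' \le_\beta^* p$, (iii) gives $q \le_\beta q'$, and (iv) combined with (iii) yields $q \le_\beta q' + b \le_\beta q$. The principal obstacle is the construction and coordinate-wise verification of the mixed name $q'(\delta)$ at odd $\delta$; once that is set up, the induction comes down to unpacking Definition~1.1 and performing routine case analyses using the preceding lemmas.
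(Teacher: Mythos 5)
Your construction is, in outline, exactly the paper's: take $q' \restrict \even := p \restrict \even$, put $\dom(q') \cap \odd := \dom(q) \cap \odd$, and at each odd $\delta$ mix a name that equals $q(\delta)$ below $b \restrict \delta$ and reverts to something $p$-compatible off $b \restrict \delta$; the paper invokes the maximality principle where you spell out the Boolean mixing, and the inductive verification is organized the same way. However, there is one concrete defect in your choice of the ``otherwise'' branch. You set $q'(\delta) := p(\delta)$ on the part of $\p_\delta^c$ incompatible with $b \restrict \delta$. Since $q \le_\beta p$ only gives $\dom(p) \subseteq \dom(q)$, there can be odd $\delta \in \dom(q) \setminus \dom(p)$, and for such $\delta$ the paper's convention makes $p(\delta)$ the empty set. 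Definition~1.1(3) requires $q'(\delta)$ to be a nice $\p_\delta^c$-name for a \emph{nonempty} closed and bounded subset of $\omega_2$, i.e.\ forced nonempty by all of $\p_\delta^c$, not just below $b \restrict \delta$. So your $q'$ need not be a condition, and your parenthetical claim that ``both are of this form'' is false in this case.

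The repair is exactly what the paper does: in the ``otherwise'' branch take $q'(\delta)$ to be $p(\delta)$ together with the least ordinal of cofinality $\omega_1$ strictly above all members of $p(\delta)$ (so just $\omega_1$ when $p(\delta) = \emptyset$). This restores nonemptiness, still end-extends $p(\delta)$, and the added point is automatically disjoint from $\dot S_\delta$ because $\dot S_\delta$ is a name for a set of ordinals of cofinality $\omega$ --- which is the extra observation your case analysis would then need at the step $q' \restrict \delta \Vdash_\delta q'(\delta) \cap \dot S_\delta = \emptyset$. With that change your argument goes through and coincides with the paper's proof.
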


\begin{proof}
	Let $q' \restrict \even := p \restrict \even$. 
	Let $\dom(q') \cap \odd := \dom(q) \cap \odd$. 
	Consider $\gamma \in \dom(q') \cap \odd$. 
	By the maximality principle for names, 
	we can find a nice $\p_\gamma^c$-name $q'(\gamma)$ for a nonempty closed and bounded subset of 
	$\omega_2$ which end-extends $p(\gamma)$ such that, 
	if $b \restrict \gamma$ 
	is in the generic filter on $\p_\gamma^c$, then $q'(\gamma) = q(\gamma)$, and otherwise 
	$q'(\gamma)$ is $p(\gamma)$ together with the least ordinal of cofinality $\omega_1$ 
	strictly above all members of $p(\gamma)$.
	
	Assume for a moment that $q'$ is a condition. 
	Note that for all odd $\gamma \in \dom(q')$, 
	$q \restrict (\gamma \cap \even) = b \restrict \gamma$ 
	forces that $q'(\gamma) = q(\gamma)$. 
	Based on this fact, it is easy to check that 
	$q \le_\beta q'$. 
	Also, $q' \restrict \even = p \restrict \even$, 
	and for all odd $\gamma \in \dom(q')$, 
	$\p_\gamma^c$ forces that 
	$q'(\gamma)$ is an end-extension of $p(\gamma)$. 
	It easily follows that $q' \le_\beta^* p$, which 
	verifies the first pair of inequalities.
		
	For the second pair, 
	since $q \le_\beta p$, 
	$b = q \restrict \even \le_\beta^c p \restrict \even = q' \restrict \even$. 
	So $b$ and $q'$ are compatible in $\p_\beta$. 
	Also, $q \le_\beta q'$ from the previous paragraph. 
	By Lemma 1.11, $q = q + b \le_\beta q' + b$. 
	Now if $\gamma \in \dom(q')$ is odd, and assuming 
	$(q' + b) \restrict \gamma \le_\gamma 
	q \restrict \gamma$, it follows that 
	$(q' + b) \restrict (\gamma \cap \even) = b \restrict 
	\gamma$ forces that $q'(\gamma) = q(\gamma)$, and 
	hence $(q' + b) \restrict (\gamma+1) \le_{\gamma+1} 
	q \restrict (\gamma+1)$. 
	It easily follows by an inductive argument that $q' + b \le_\beta q$.
	
	Thus, we have shown that if $q' \in \p_\beta$, then 
	all of the inequalities stated in the proposition hold. 
	Moreover, the above argument also shows that if, 
	for a fixed 
	$\xi \le \beta$, $q' \restrict \xi \in \p_\xi$, 
	then all of the inequalities 
	stated in the proposition hold for the 
	conditions restricted to $\xi$.
	
	It remains to show that $q'$ is a condition. 
	By Definition 1.1, it suffices to show that 
	whenever $\gamma \in \dom(q')$ is odd, if we assume 
	that $q' \restrict \gamma$ is in $\p_\gamma$ and is $\le_\gamma^*$-below 
	$p \restrict \gamma$, then 
	$$
	q' \restrict \gamma \Vdash_\gamma 
	q'(\gamma) \cap \dot S_\gamma =\emptyset.
	$$
	Let $G$ be a generic filter on $\p_\gamma$ which contains 
	$q' \restrict \gamma$. 
	Let $S_\gamma := \dot S_\gamma^G$, 
	$G^c := G \cap \p_\gamma^c$, 
	and $x := q'(\gamma)^{G^c}$. 
	We will show that $x \cap S_\gamma = \emptyset$.
	
	By the choice of $q'(\gamma)$, $x$ is equal to 
	$q(\gamma)^{G^c}$ provided that 
	$b \restrict \gamma \in G^c$, and otherwise 
	is equal to $p(\gamma)^{G^c}$ together with an ordinal of cofinality $\omega_1$. 
	In the latter case, since 
	$q' \restrict \gamma \le_\gamma 
	p \restrict \gamma$ and 
	$p \restrict \gamma \Vdash_\gamma 
	p(\gamma) \cap \dot S_\gamma = \emptyset$, we have 
	that $p \restrict \gamma \in G$ and 
	$p(\gamma)^{G^c}$ is disjoint from 
	$S_\gamma$. 
	Since $x$ is equal to $p(\gamma)^{G^c}$ together with an ordinal 
	of cofinality $\omega_1$, whereas $S_\gamma$ consists of ordinals of cofinality $\omega$, 
	$x$ is disjoint from $S_\gamma$. 
	So assume that $b \restrict \gamma \in G^{c}$. 
	Then by Lemma 1.9, 
	$(q' \restrict \gamma) + (b \restrict \gamma) \in G$. 
	But this condition is $\le_\gamma$-below 
	$q \restrict \gamma$. 
	So $q \restrict \gamma \in G$. 
	As $q \restrict \gamma$ forces in $\p_\gamma$ that 
	$q(\gamma) \cap \dot S_\gamma = \emptyset$, it follows 
	that $q(\gamma)^{G^c} = q'(\gamma)^{G^c} = x$ is disjoint 
	from $S_\gamma$.
	\end{proof}

\begin{definition}
	Let $\beta \le \alpha$. 
	Define $\p_\beta^c \otimes \p_\beta^*$ as the forcing poset consisting of pairs 
	$(a,p)$, where $a \in \p_\beta^c$ and $p \in \p_\beta$, such that 
	$a$ and $p$ are compatible in $\p_\beta$, 
	with the ordering $(a_1,p_1) \le (a_0,p_0)$ if
	$a_1 \le_\beta^c a_0$ and $p_1 \le_\beta^* p_0$.
	\end{definition}

Observe that if $p \in \p_\beta$, then 
$(p \restrict \even,p) \in \p_\beta^c \otimes \p_\beta^*$.

For any forcing poset $\q$ and $q \in \q$, 
we will use the notation $\q / q$ for the suborder 
$\{ r \in \q : r \le_\q q \}$.

The next lemma reveals that $\p_\beta^c \otimes \p_\beta^*$ is 
essentially a product forcing.

\begin{lemma}
	Let $\beta \le \alpha$. 
	Let $(a,p) \in \p_\beta^c \otimes \p_\beta^*$, 
	and assume that $a \le_\beta^c p \restrict \even$. 
	Then $(\p_\beta^c \otimes \p_\beta^*) / (a,p)$ 
	is equal to the product forcing 
	$$
	(\p_\beta^c / a) \times (\p_\beta^* / p).
	$$
\end{lemma}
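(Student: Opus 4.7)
The plan is to verify the equality at two levels: the underlying sets and the orderings. The ordering comparison is essentially immediate, since in both posets the relation $(a_1,p_1) \le (a_0,p_0)$ is declared to hold precisely when $a_1 \le_\beta^c a_0$ and $p_1 \le_\beta^* p_0$; the definitions of $\p_\beta^c \otimes \p_\beta^*$ and of the product forcing literally coincide on this point. So the real content lies in the agreement of the underlying sets.

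A pair $(a',p')$ belongs to the product $(\p_\beta^c / a) \times (\p_\beta^* / p)$ precisely when $a' \le_\beta^c a$ and $p' \le_\beta^* p$, with no further constraint. The same pair belongs to $(\p_\beta^c \otimes \p_\beta^*)/(a,p)$ precisely when, in addition, $a'$ and $p'$ are compatible in $\p_\beta$. So the whole proof reduces to showing that this compatibility is automatic under the hypothesis $a \le_\beta^c p \restrict \even$.

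I would prove this by a short transitivity argument followed by an appeal to the compatibility criterion recorded just before Notation 1.7. Since $p' \le_\beta^* p$, the definition of $\le_\beta^*$ gives $p' \restrict \even = p \restrict \even$. Combining this with the hypothesis and the transitivity of $\le_\beta^c$ yields
\[
a' \le_\beta^c a \le_\beta^c p \restrict \even = p' \restrict \even,
\]
so in particular $a'$ and $p' \restrict \even$ are compatible in $\p_\beta^c$. The characterization preceding Notation 1.7 states that compatibility of an element of $\p_\beta^c$ with an element of $\p_\beta$ in $\p_\beta$ is equivalent to compatibility of the first with the $\even$-restriction of the second in $\p_\beta^c$; this delivers compatibility of $a'$ and $p'$ in $\p_\beta$.

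I do not expect any genuine obstacle here: the lemma is a definitional unwinding, and the only substantive observation is that the hypothesis $a \le_\beta^c p \restrict \even$, together with the fact that $\le_\beta^*$ fixes the even part of a condition, forces every product-theoretic pair below $(a,p)$ to automatically satisfy the compatibility clause built into the definition of $\p_\beta^c \otimes \p_\beta^*$. The one place to be careful is to make sure the orderings on both sides are stated on the same underlying set before comparing them, rather than comparing orderings on a priori different sets.
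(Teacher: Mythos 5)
Your proof is correct and follows essentially the same route as the paper: both reduce the lemma to checking that the compatibility clause in the definition of $\p_\beta^c \otimes \p_\beta^*$ is automatic, via the chain $a' \le_\beta^c a \le_\beta^c p \restrict \even = p' \restrict \even$ and the compatibility criterion for conditions in $\p_\beta^c$ against conditions in $\p_\beta$. Nothing to add.
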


\begin{proof}
	Let $(b,q) \le (a,p)$ in $\p_\beta^c \otimes \p_\beta^*$. 
	Then $b \le_\beta^c a$ and $q \le_\beta^* p$. 
	Thus, 
	$(b,q) \in (\p_\beta^c / a) \times (\p_\beta^* / p)$.
	
	Now consider $(b,q) \in 
	(\p_\beta^c / a) \times (\p_\beta^* / p)$. 
	Then $b \le_\beta^c a$ and $q \le_\beta^* p$. 
	By the choice of $(a,p)$, 
	$b \le_\beta^c a \le_\beta^c p \restrict \even = 
	q \restrict \even$, and in particular, 
	$b$ and $q$ are compatible in $\p_\beta$. 
	Therefore, $(b,q)$ is in $\p_\beta^c \otimes \p_\beta^*$. 
	And $b \le_\beta^c a$ and $q \le_\beta^* p$ means that 
	$(b,q) \le (a,p)$ in $\p_\beta^c \otimes \p_\beta^*$.
	Finally, it is immediate by definition that these 
	two forcings have the same ordering.
	\end{proof}

Note that there are densely many conditions $(a,p)$ 
in $\p_\beta^c \otimes \p_\beta^*$ such that 
$a \le_\beta^c p \restrict \even$. 
This observation together with Lemma 1.15 easily 
implies the next result.

\begin{lemma}
	Let $\beta \le \alpha$. 
	Suppose that $H$ is a generic filter on 
	$\p_\beta^c \otimes \p_\beta^*$. 
	Then there is a condition $(a,p) \in H$ such that 
	$a \le_\beta^c p \restrict \even$. 
	Moreover, if $(a,p)$ is any such condition in $H$, 
	then letting $K := H \cap 
	((\p_\beta^c \otimes \p_\beta^*) / (a,p))$, 
	we have that $K$ is a generic filter on 
	$(\p_\beta^c / a) \times (\p_\beta^* / p)$ 
	and $V[H] = V[K]$.
\end{lemma}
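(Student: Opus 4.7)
The plan is straightforward: first prove density of the set of conditions $(a,p)$ with $a \le_\beta^c p \restrict \even$ to get the existence claim, and then combine Lemma 1.15 with the standard fact about restricting a generic filter below a condition to conclude.

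First I would verify the density assertion. Given any $(a,p) \in \p_\beta^c \otimes \p_\beta^*$, the compatibility of $a$ and $p$ in $\p_\beta$ is equivalent, by the observation preceding Notation 1.7, to the compatibility of $a$ and $p \restrict \even$ in $\p_\beta^c$. Letting $a'$ be a common lower bound of $a$ and $p \restrict \even$ in $\p_\beta^c$ (e.g., their union taken coordinate-wise on the even part), the pair $(a',p)$ is a condition in $\p_\beta^c \otimes \p_\beta^*$ since $a' \le_\beta^c p \restrict \even$ forces compatibility of $a'$ and $p$ in $\p_\beta$. Moreover $(a',p) \le (a,p)$ and $a' \le_\beta^c p \restrict \even$. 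Genericity of $H$ then supplies a condition $(a,p) \in H$ with the desired property.

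For the moreover clause, fix such an $(a,p) \in H$ and set $K := H \cap ((\p_\beta^c \otimes \p_\beta^*)/(a,p))$. By Lemma 1.15 the cone $(\p_\beta^c \otimes \p_\beta^*)/(a,p)$ coincides with the product $(\p_\beta^c/a) \times (\p_\beta^*/p)$, so $K$ is a subset of this product. To see $K$ is generic on it, I would invoke the standard argument: given any dense $D$ in the cone, the set $D \cup \{ r \in \p_\beta^c \otimes \p_\beta^* : r \perp (a,p) \}$ is dense in $\p_\beta^c \otimes \p_\beta^*$, and $H$ meets it; since $(a,p) \in H$ precludes hitting the second piece, $H \cap D$ is nonempty, so $K \cap D$ is nonempty.

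It remains to show $V[H] = V[K]$. The inclusion $V[K] \subseteq V[H]$ is immediate from $K \subseteq H$. For the reverse, I would observe that $H$ is the upward closure of $K$ in $\p_\beta^c \otimes \p_\beta^*$: any $(b,q) \in H$ is compatible in $H$ with $(a,p)$, hence a common extension lies in $K$ and is below $(b,q)$. Therefore $H \in V[K]$. There is no essential obstacle here; the argument is routine once the density observation is in hand, with the main content of the lemma effectively packaged into Lemma 1.15.
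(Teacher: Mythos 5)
Your proposal is correct and follows exactly the route the paper intends: the paper omits the proof, remarking only that the density of conditions $(a,p)$ with $a \le_\beta^c p \restrict \even$ together with Lemma 1.15 "easily implies" the result, and your argument is precisely the standard filling-in of that remark (density via taking a common lower bound on the even part, genericity of the restricted filter, and recovery of $H$ as the upward closure of $K$).
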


To provide some additional clarification, let us describe 
the forcing poset $\p_\beta^c \otimes \p_\beta^*$ as a 
disjoint sum of product forcings. 
Namely, for each $b \in \p_\beta^c$, observe that 
$\p_\beta^* / b 
= \{ p \in \p_\beta : p \restrict \even = b \}$. 
In particular, if $b \ne c$ then 
$\p_\beta^* / b$ and $\p_\beta^* / c$ are disjoint, and 
moreover, any condition in $\p_\beta^* / b$ and any 
condition in $\p_\beta^* / c$ are $\le_\beta^*$-incomparable. 

Let $D$ be the dense set of conditions $(a,p)$ in 
$\p_\beta^c \otimes \p_\beta^*$ such that 
$a \le_\beta^c p \restrict \even$. 
It is easy to check that 
$$
D = 
\bigcup 
\{ (\p_\beta^c / b) \times (\p_\beta^* / b) : b \in \p_\beta^c \}.
$$
Thus, $\p_\beta^c \otimes \p_\beta^*$ contains a dense subset which is a disjoint sum of product forcings.

\begin{definition}
	Let $\beta \le \alpha$. 
	Define $\tau_\beta : \p_\beta^c \otimes \p_\beta^* \to \p_\beta$ by 
	$\tau_\beta(a,p) := p + a$.
	\end{definition}

Note that this definition makes sense by Lemma 1.8.

\begin{lemma}
	Let $\beta \le \alpha$. 
	The function $\tau_\beta : \p_\beta^c \otimes \p_\beta^* \to \p_\beta$ 
	is a surjective projection mapping.
	\end{lemma}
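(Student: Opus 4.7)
The plan is to verify the three requirements for $\tau_\beta$ to be a surjective projection: surjectivity onto $\p_\beta$, order-preservation, and the projection property asserting that every $r \le_\beta \tau_\beta(a,p)$ can be matched by an extension $(b,q) \le (a,p)$ in $\p_\beta^c \otimes \p_\beta^*$ with $\tau_\beta(b,q) \le_\beta r$. That $\tau_\beta$ indeed maps into $\p_\beta$ is immediate from Lemma 1.8.

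Surjectivity is easy: for any $r \in \p_\beta$, the empty condition $\emptyset$ in $\p_\beta^c$ is trivially compatible with $r$, and unwinding Notation 1.7 gives $\tau_\beta(\emptyset,r) = r + \emptyset = r$. Order-preservation is a direct appeal to Lemma 1.11: if $(b,q) \le (a,p)$ in $\p_\beta^c \otimes \p_\beta^*$, then $b \le_\beta^c a$ and $q \le_\beta^* p$ (so in particular $q \le_\beta p$), with the required compatibilities built into the definition of the product; the lemma then delivers $q + b \le_\beta p + a$, i.e.\ $\tau_\beta(b,q) \le_\beta \tau_\beta(a,p)$.

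For the projection property, given $(a,p)$ and $r \le_\beta p + a$, the plan is to set $b := r \restrict \even$ and produce the second coordinate via Proposition 1.13. Since $p + a \le_\beta p$ by Lemma 1.8, also $r \le_\beta p$, so Proposition 1.13 applied to the pair $r \le_\beta p$ yields some $q \in \p_\beta$ with $q \le_\beta^* p$ and $q + b \le_\beta r$. Then $(b,q)$ is the desired witness: $\tau_\beta(b,q) = q + b \le_\beta r$ by construction. The verification that $(b,q) \le (a,p)$ in $\p_\beta^c \otimes \p_\beta^*$ is routine. The inequality $b \le_\beta^c a$ follows from two applications of Lemma 1.10(2), first to $r \le_\beta p + a$ to get $b \le_\beta^c (p+a) \restrict \even$, and then to $p + a \le_\beta a$ (from Lemma 1.8) to get $(p+a) \restrict \even \le_\beta^c a$. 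The compatibility of $b$ and $q$ in $\p_\beta$ reduces to $b \le_\beta^c p \restrict \even = q \restrict \even$, which is a third application of Lemma 1.10(2) to $r \le_\beta p$.

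The main conceptual point, and where some care is needed, is the choice of how to invoke Proposition 1.13. Applying it naively to the pair $r \le_\beta p + a$ would yield a $q$ with $q \le_\beta^* p + a$, whose even part equals $(p+a) \restrict \even$ rather than $p \restrict \even$; such a $q$ need not satisfy $q \le_\beta^* p$, and $(b,q)$ would then fail to sit below $(a,p)$ in the product. The trick is to apply Proposition 1.13 instead to $r \le_\beta p$, letting the extra even refinement beyond $p \restrict \even$ be absorbed into the separate $\p_\beta^c$-coordinate $b$ of the product. This use of the proposition --- splitting an ordinary extension in $\p_\beta$ into a $\le_\beta^*$-refinement paired with an even-side refinement --- is precisely the content it was designed to supply.
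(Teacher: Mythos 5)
Your proof is correct and follows essentially the same route as the paper's: order-preservation via Lemma 1.11, and the projection property by setting $b := r \restrict \even$ and applying Proposition 1.13 to $r \le_\beta p$ (not to $r \le_\beta p + a$), exactly as the paper does. The only cosmetic difference is your surjectivity witness $(\emptyset, r)$ in place of the paper's $(r \restrict \even, r)$; both work.
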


\begin{proof}
	Suppose that $(b,q) \le (a,p)$ in $\p_\beta^c \otimes \p_\beta^*$. 
	Then by definition, $b \le_\beta^c a$ and $q \le_\beta^* p$. 
	Hence, $q \le_\beta p$. 
	By Lemma 1.11, 
	$\tau_\beta(b,q) = q + b \le_\beta p + a = 
	\tau_\beta(a,p)$.
	
	Consider a condition $p \in \p_\beta$. 
	Then $(p \restrict \even,p) \in 
	\p_\beta^c \otimes \p_\beta^*$, and 
	$\tau_\beta(p \restrict \even,p) = p$. 
	So $\tau_\beta$ is surjective.
		
	Now assume that $q \le_\beta \tau_\beta(a,p) = p + a$. 
	We will find $(b,q') \le (a,p)$ in 
	$\p_\beta^c \otimes \p_\beta^*$ such that 
	$\tau_\beta(b,q') \le_\beta q$. 
	Now $q \le_\beta p + a \le_\beta p$, so $q \le_\beta p$. 
	Let $b := q \restrict \even$. 
	Then by Lemma 1.10(2), 
	$$
	b = q \restrict \even \le_\beta^c 
	(p + a) \restrict \even 
	\le_\beta^c a, p \restrict \even.
	$$
	So $b \le_\beta^c a$ and 
	$b \le_\beta^c p \restrict \even$. 
	Apply Proposition 1.13 to find $q' \in \p_\beta$ 
	such that $q \le_\beta q' \le_\beta^* p$ 
	and $q \le_\beta q' + b \le_\beta q$.
	
	Since $b \le_\beta^c p \restrict \even = 
	q' \restrict \even$, 
	$b$ and $q' \restrict \even$ 
	are compatible in $\p_\beta^c$. 
	Hence, $b$ and $q'$ are compatible in $\p_\beta$. 
	Therefore, $(b,q') \in \p_\beta^c \otimes \p_\beta^*$. 
	Also, as noted above, $b \le_\beta^c a$ and 
	$q' \le_\beta^* p$, and therefore 
	$(b,q') \le (a,p)$ in $\p_\beta^c \otimes \p_\beta^*$. 
	Finally, $\tau_\beta(b,q') = q' + b \le_\beta q$.
\end{proof}

The final result from this section will be used 
in the cardinal preservation arguments needed for the 
consistency result.

\begin{lemma}
	Assume that $2^{\omega_1} = \omega_2$. Then:
	\begin{enumerate}
		\item for all $\beta \le \alpha$ with 
		$|\beta| \le \omega_2$, $|\p_\beta| \le \omega_2$;
		\item if $\alpha = \omega_3$, then 
		$\p_\alpha = \bigcup \{ \p_\beta : 
		\beta < \omega_3 \}$ has size $\omega_3$ 
		and $\p_\alpha$ is $\omega_3$-c.c.;
		\item if $\alpha = \omega_3$, then 
		for all $a \in \p_\alpha^c$, 
		$\p_\alpha^* / a = 
		\bigcup \{ \p_\beta^* / a : \beta < \omega_3 \}$ 
		has size $\omega_3$ and is $\omega_3$-c.c.
		\end{enumerate}
	\end{lemma}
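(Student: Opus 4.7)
The plan is to establish (1) by induction on $\beta$, and then derive (2) and (3) by combining (1) with the bounded-support structure of the iteration and a standard $\Delta$-system argument. The underlying cardinal arithmetic, used repeatedly, is $\omega_2^{\omega_1} = (2^{\omega_1})^{\omega_1} = 2^{\omega_1} = \omega_2$.

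For (1), I induct on $\beta \le \alpha$ with $|\beta| \le \omega_2$. By Lemma 1.3, $|\p_\gamma^c| \le \omega_2$ throughout. At even successors the bound is preserved by multiplying by $|\add(\omega)| = \omega$. At odd successors $\gamma+1$, I count the nice $\p_\gamma^c$-names for bounded subsets of $\omega_2$: since $\p_\gamma^c$ is ccc, each such name may be taken in a canonical form whose mentioned ordinals lie below some $\eta^* < \omega_2$ (extracted from a maximal antichain witnessing a forced upper bound), giving a total count of at most $\omega_2 \cdot \omega_2^{\omega_1} = \omega_2$. At limit $\beta$, each condition has support of size $\le \omega_1$; the number of such supports is $|\beta|^{\omega_1} \le \omega_2$, and the number of value-assignments per support is $\le \omega_2^{\omega_1} = \omega_2$.

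For (2), any $p \in \p_{\omega_3}$ has $|\dom(p)| < \omega_2 < \cf(\omega_3) = \omega_3$, so $\dom(p)$ is bounded in $\omega_3$ and $p \in \p_\beta$ for some $\beta < \omega_3$; this yields the union decomposition, and (1) gives $|\p_{\omega_3}| = \omega_3$. For $\omega_3$-c.c., take an antichain $\{p_\eta : \eta < \omega_3\}$. The hypothesis $\omega_2^{<\omega_2} = \omega_2 < \omega_3$ enables the $\Delta$-system lemma, so I refine to an $\omega_3$-subfamily whose domains form a $\Delta$-system with root $R$ of size $<\omega_2$. By (1), the number of possible restrictions $p_\eta \restrict R$ is at most $\omega_2$, so by pigeonhole some $p_\eta, p_\xi$ agree on $R$. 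Their union $r := p_\eta \cup p_\xi$ is a well-defined function, and I show it is a condition in $\p_{\omega_3}$ with $r \le_{\omega_3} p_\eta, p_\xi$, contradicting the antichain assumption.

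For (3), $a$ has finite domain, so $a \in \p_{\beta_0}$ for some $\beta_0 < \omega_3$. Each $p \in \p_\alpha^*/a$ has $p \restrict \even = a$ and odd support of size $<\omega_2$ bounded in $\omega_3$, hence lies in $\p_\beta^*/a$ for some $\beta < \omega_3$; the size bound then follows from (1). For the chain condition, the $\Delta$-system plus pigeonhole argument from (2) is applied to the odd parts (the even parts being identically $a$), and the key point is that the compatibility witness $r = p_\eta \cup p_\xi$ inherits $r \restrict \even = a = p_\eta \restrict \even = p_\xi \restrict \even$, upgrading $r \le_\alpha p_\eta, p_\xi$ to $r \le_\alpha^* p_\eta, p_\xi$ and contradicting $\le^*$-incompatibility. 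The main technical obstacle, common to both (2) and (3), is the inductive verification that $r$ is a condition and lies $\le_\alpha$-below both $p_\eta$ and $p_\xi$: at odd coordinates $\delta \in R$ we have $r(\delta) = p_\eta(\delta) = p_\xi(\delta)$, which makes the end-extension clause of Definition 1.1(5) trivial, and elsewhere $r$ agrees with the sole $p_\eta$ or $p_\xi$ whose domain contains $\delta$; the forcing relation $r \restrict \delta \Vdash_\delta r(\delta) \cap \dot S_\delta = \emptyset$ is then inherited by induction via the $\le_\delta$-extension together with the corresponding forced statement at $p_\eta \restrict \delta$ or $p_\xi \restrict \delta$.
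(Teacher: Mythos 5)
Your proposal is correct and follows essentially the same route as the paper: part (1) by induction using the count of nice $\p_\gamma^c$-names for bounded subsets of $\omega_2$ (at most $2^{\omega_1}=\omega_2$ since $\p_\gamma^c$ is $\omega_1$-c.c.\! of size $\le\omega_2$), and parts (2) and (3) via bounded supports for the union decomposition together with a $\Delta$-system plus pigeonhole argument showing $p_\eta\cup p_\xi$ is a common extension. You simply supply more detail than the paper's sketch, in particular the inductive verification that the union is a condition below both, and that detail is accurate.
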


\begin{proof}
	(1) Since $\alpha \le \omega_3$, 
	for all $\gamma \in \alpha$, 
	$\p_\gamma^c$ is $\omega_1$-c.c.\! and has size at most $\omega_2$. 
	Hence, there are at most $2^{\omega_1} = \omega_2$ many nice $\p_\gamma^c$-names 
	for bounded subsets of $\omega_2$. 
	With this observation, 
	(1) easily follows by induction on $\beta$.
	
	(2) The first part of (2) easily follows from Definition 1.1. 
	If $\{ p_i : i < \omega_3 \} \subseteq \p_\alpha$, then a 
	$\Delta$-system argument implies 
	that there is a set $X \subseteq \omega_3$ of size $\omega_3$ 
	and a function $r$ such that for all $i < j$ in $X$, 
	$\dom(p_i) \cap \dom(p_j) = \dom(r)$ and for all $\gamma \in \dom(r)$, 
	$p_i(\gamma) = p_j(\gamma)$. 
	It easily follows that $p_i \cup p_j$ is a condition in $\p_\alpha$ below $p_i$ and $p_j$, 
	proving that $\p_\alpha$ is $\omega_3$-c.c.
	
	(3) The proof of (3) is similar to the proof of (2).
	\end{proof}

Note that if $\alpha = \omega_3$, then 
$\p_{\alpha}^*$ is not $\omega_3$-c.c., since any two conditions in $\p_{\alpha}^*$ 
with different even parts are incompatible in $\p_\alpha^*$.

\section{Distributivity and cardinal preservation}

The most challenging part of 
our main consistency result will be in the 
verification that a 
particular suitable mixed support forcing iteration 
$\langle \p_\beta : \beta \le \omega_3 \rangle$, which destroys the stationarity of 
nonreflecting subsets of $\omega_2 \cap \cof(\omega)$, 
preserves $\omega_1$ and $\omega_2$. 
By Propositions 2.1 and 2.2 below, it will suffice to prove that $\p_\beta^*$ is 
$\omega_2$-distributive for all $\beta < \omega_3$.

For some perspective, let us review in rough outline 
the original Harrington-Shelah argument \cite{HS}. 
Start with a model of \textsf{GCH} in which 
$\kappa$ is a Mahlo cardinal, 
and let $G$ be a generic filter on 
the L\'evy collapse $\col(\omega_1,<\! \kappa)$. 
In $V[G]$, define a forcing 
iteration $\langle \p_\alpha, \dot \q_\beta : \alpha \le \omega_3, \ \beta < \omega_3 \rangle$ 
so that for all $\alpha < \omega_3$, $\dot \q_\alpha$ is a $\p_\alpha$-name for a forcing which kills the 
stationarity of a nonreflecting 
subset of $\omega_2 \cap \cof(\omega)$, bookkeeping so that 
all nonreflecting stationary sets are handled. 
To prove that this forcing iteration is $\omega_2$-distributive, fix $\alpha < \omega_3$, 
and consider an appropriate 
elementary substructure $M$ containing $\p_\alpha$ 
with transitive collapsing map $\pi$. 
Then show that any condition 
in $M \cap \p_\alpha$ has an extension which 
lies in every dense open 
subset of $\p_\alpha$ in $M$.

The fact that $\p_\alpha$ is an iteration of natural posets 
adding clubs disjoint from nonreflecting subsets 
of $\omega_2$ implies that in 
$V[G \restrict (M \cap \kappa)]$, 
$\pi(\p_\alpha)$ is an iteration of natural posets adding 
clubs disjoint from \emph{nonstationary} subsets of $M \cap \kappa$. 
As such, $\pi(\p_\alpha)$ contains an $(M \cap \kappa)$-closed dense subset. 
It follows that the tail of the L\'evy collapse provides a 
$V[G \restrict (M \cap \kappa)]$-generic filter 
on $\pi(\p_\alpha)$ in $V[G]$, and the image of this filter 
under $\pi^{-1}$ is an $M$-generic filter on $\p_\alpha$. 
Hence, a lower bound of this filter, which does 
exist, 
is a member of every dense open subset of 
$\p_\alpha$ in $M$.

Let us compare these arguments with our situation. 
Instead of forcing with 
a L\'evy collapse, our preparation forcing 
will be a countable 
support iteration of proper forcings which is designed to collapse $\kappa$ to become $\omega_2$ 
and ensure 
the existence of sufficiently generic 
filters for certain forcings. 
Let $G$ be a generic filter for the preparation forcing. 
In $V[G]$, we define a 
suitable mixed support forcing iteration $\p$ which 
adds reals and clubs disjoint from nonreflecting sets. 

Consider an elementary substructure $M$ 
with transitive collapsing map $\pi$. 
In order to prove that $\p^*$ is $\omega_2$-distributive, 
one might try to argue similarly as above that in 
$V[G \restrict (M \cap \kappa)]$, 
$\pi(\p)$ is a suitable mixed support forcing iteration 
for adding reals and adding clubs disjoint from 
nonstationary sets. 
It turns out, however, that 
we can only show that the product 
$\pi(\p^c \otimes \p^*)$ forces that the collapse of a 
nonreflecting set is nonstationary, rather than $\pi(\p)$. 
Nonetheless, by some technical arguments this will suffice 
to prove that $\p^*$ is $\omega_2$-distributive, and hence 
that $\p$ preserves cardinals.

\begin{proposition}
	Let $\langle \p_\beta : \beta \le \alpha \rangle$ be 
	a suitable mixed support forcing iteration.
	Let $\beta \le \alpha$. 
	If $\p_\beta^*$ is $\omega_2$-distributive, 
	then $\p_\beta$ preserves $\omega_1$ and $\omega_2$.
	\end{proposition}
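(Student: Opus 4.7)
The plan is to reduce cardinal preservation for $\p_\beta$ to the same question for the auxiliary forcing $\p_\beta^c \otimes \p_\beta^*$, and then to exploit the product structure of the latter. By Lemma 1.19, $\tau_\beta : \p_\beta^c \otimes \p_\beta^* \to \p_\beta$ is a surjective projection; hence for any $V$-generic filter $G \subseteq \p_\beta$ there is, in a suitable further generic extension, a $V$-generic $H \subseteq \p_\beta^c \otimes \p_\beta^*$ with $V[G] \subseteq V[H]$. So any cardinal preserved in $V[H]$ is automatically preserved in $V[G]$, and it suffices to show that $\p_\beta^c \otimes \p_\beta^*$ preserves $\omega_1$ and $\omega_2$.

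For this, I would combine Lemmas 1.15 and 1.16: the dense set of conditions $(a,p) \in \p_\beta^c \otimes \p_\beta^*$ satisfying $a \le_\beta^c p \restrict \even$ is an incomparable union of the literal product forcings $(\p_\beta^c / a) \times (\p_\beta^* / p)$. Consequently, any generic extension by $\p_\beta^c \otimes \p_\beta^*$ is obtained by forcing with one such product, and I would analyze this product as a two-step iteration. First force with $\p_\beta^*/p$: being the restriction of the $\omega_2$-distributive poset $\p_\beta^*$ below a condition, it is itself $\omega_2$-distributive and hence preserves $\omega_1$ and $\omega_2$. Then, in the intermediate model, force with $\p_\beta^c/a$, which by Lemma 1.3 is isomorphic to a restriction of $\add(\omega,\ot(\beta \cap \even))$, hence is a Cohen-style poset.

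The main step requiring care is that the ccc of $\p_\beta^c/a$ persists in the $\p_\beta^*/p$-extension. In general $\omega_2$-distributive forcing need not preserve the ccc of an auxiliary poset, but here the ccc of $\add(\omega,\kappa)$ is a theorem of $\textsf{ZFC}$ provable by the $\Delta$-system lemma on the finite supports of its conditions, and therefore holds in any model of $\textsf{ZFC}$, including the intermediate extension. Once this is noted, the second step preserves $\omega_1$ by ccc, and since the intermediate model still shares its $\omega_2$ with $V$, it preserves $\omega_2$ as well. Combining with the reduction in the first paragraph yields that $\p_\beta$ preserves $\omega_1$ and $\omega_2$, completing the proof.
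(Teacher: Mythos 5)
Your proposal is correct and follows essentially the same route as the paper: pass to $\p_\beta^c \otimes \p_\beta^*$ via the projection $\tau_\beta$, use Lemmas 1.15--1.16 to reduce to the product $(\p_\beta^c/a) \times (\p_\beta^*/p)$, force first with the $\omega_2$-distributive factor, and then observe that the Cohen factor remains $\omega_1$-c.c.\! in the intermediate model by absoluteness. The only cosmetic difference is that the paper runs the argument by contradiction from a condition $p$ forcing a cardinal to be collapsed (choosing $H$ to contain $(p\restrict\even,p)$), whereas you invoke the standard quotient fact to embed an arbitrary $V[G]$ into some $V[H]$; both are fine.
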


\begin{proof}
	Suppose for a contradiction that $p \in \p_\beta$ 
	forces that either $\omega_1^V$ or $\omega_2^V$ is no longer 
	a cardinal in $V^{\p_\beta}$. 
	Let $a := p \restrict \even$. 
	Let $H$ be a generic filter on $\p_\beta^c \otimes \p_\beta^*$ 
	which contains the condition $(a,p)$. 
	Let $G := \tau_\beta[H]$. 
	Then $G$ is a generic filter on $\p_\beta$ 
	by Lemma 1.18 and 
	$p = p + a = \tau_\beta(a, p)$ is in $G$. 
	Therefore, either 
	$\omega_1^V$ or $\omega_2^V$ is no longer a cardinal 
	in $V[G]$, and hence in $V[H]$.
	
	By Lemma 1.16, $V[H] = V[K]$, 
	where $K = K_1 \times K_2$ is a generic 
	filter on $(\p_\beta^c / a) \times 
	(\p_\beta^* / p)$. 
	Now $\p_\beta^*$ is $\omega_2$-distributive 
	by assumption, so $\omega_1^V$ and $\omega_2^V$ 
	remain cardinals in $V[K_2]$. 
	By absoluteness, in $V[K_2]$, 
	$\p_\beta^c$ is still 
	isomorphic to Cohen forcing, and hence 
	is $\omega_1$-c.c. 
	Therefore, 
	$\omega_1^V$ and $\omega_2^V$ remain cardinals in 
	$V[K_2][K_1] = V[K_1][K_2] = V[K] = V[H]$, which 
	is a contradiction.
\end{proof}

\begin{proposition}
	Assume that $2^{\omega_1} = \omega_2$. 
	Let $\langle \p_\beta : 
	\beta \le \omega_3 \rangle$ be 
	a suitable mixed support forcing iteration. 
	Suppose that for all $\beta < \omega_3$, 
	$\p_\beta^*$ is $\omega_2$-distributive. 
	Then $\p_{\omega_3}^*$ is $\omega_2$-distributive, 
	and hence preserves $\omega_1$ and $\omega_2$.
	\end{proposition}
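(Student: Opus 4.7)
The plan is to reduce $\omega_2$-distributivity of $\p_{\omega_3}^*$ to the hypothesized distributivity of $\p_\beta^*$ for a suitably large $\beta < \omega_3$. Fix $p \in \p_{\omega_3}^*$ and a sequence $\langle D_i : i < \omega_1 \rangle$ of $\le^*$-dense open subsets of $\p_{\omega_3}^*$; the goal is to produce $q \le_{\omega_3}^* p$ lying in $\bigcap_i D_i$.

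The first step I would carry out is a reflection claim: for any $\beta < \omega_3$ with $\dom(p) \subseteq \beta$, the restriction map $r \mapsto r \restrict \beta$ sends $\p_{\omega_3}^* / p$ into $\p_\beta^* / p$ and serves as a reduct. Given $r \in \p_{\omega_3}^* / p$ and $s \le_\beta^* r \restrict \beta$, the amalgamation $s \cup (r \restrict [\beta,\omega_3))$ is a common $\le_{\omega_3}^*$-extension of $s$ and $r$: since $r \le_{\omega_3}^* p$ forces $r \restrict \even = p \restrict \even \subseteq \beta$, the amalgamation has even part exactly $r \restrict \even = s \restrict \even$, and by Lemma 1.7(4) it is a condition lying $\le_{\omega_3}$-below both $s$ and $r$. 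Consequently, any maximal $\le^*$-antichain of $\p_{\omega_3}^* / p$ contained in $\p_\beta$ will automatically be a maximal $\le^*$-antichain of $\p_\beta^* / p$.

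Next, I would pick for each $i$ a maximal $\le^*$-antichain $A_i$ in the dense open set $D_i \cap (\p_{\omega_3}^* / p)$. By Lemma 1.20(3), $\p_{\omega_3}^* / (p \restrict \even)$ is $\omega_3$-c.c., and $\p_{\omega_3}^* / p$ sits inside it, so $|A_i| \le \omega_2$ and $|\bigcup_i A_i| \le \omega_2$. Choose $\beta < \omega_3$ with $\bigcup_i A_i \cup \{p\} \subseteq \p_\beta$; by the reflection step, each $A_i$ is then a maximal $\le^*$-antichain of $\p_\beta^* / p$. Applying the hypothesized $\omega_2$-distributivity of $\p_\beta^*$ to the $\le^*$-dense open sets $E_i := \{ r \in \p_\beta^* / p : (\exists r_0 \in A_i)\; r \le_\beta^* r_0 \}$ produces $q \le_\beta^* p$ in $\bigcap_i E_i$; since each $D_i$ is $\le^*$-open and $q$ lies $\le^*$-below some element of $A_i \subseteq D_i$, we get $q \in \bigcap_i D_i$. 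Preservation of $\omega_1$ and $\omega_2$ by $\p_{\omega_3}^*$ is then immediate from $\omega_2$-distributivity, with Proposition 2.1 transferring the conclusion to $\p_{\omega_3}$. The main obstacle I expect is the reflection step: amalgamation across the boundary $\beta$ must respect both conditionhood and the $\le^*$-ordering, which works precisely because $r \restrict \even = p \restrict \even \subseteq \beta$ for every $r \le^* p$, so no even coordinate of $r$ crosses the boundary.
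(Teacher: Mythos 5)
Your argument is correct and follows essentially the same route as the paper: both use the $\omega_3$-c.c.\ of $\p_{\omega_3}^* / (p \restrict \even)$ from Lemma 1.19(3) to capture the relevant antichains (the paper instead speaks of nice names for ordinals) inside some $\p_\beta$ with $\beta < \omega_3$, and then invoke the hypothesized $\omega_2$-distributivity of $\p_\beta^*$, your explicit reduct/reflection step being exactly what the paper's ``it easily follows'' elides. The only needed corrections are to your citations: the amalgamation fact is Lemma 1.4(4), not Lemma 1.7(4), and the chain-condition lemma is Lemma 1.19(3), not Lemma 1.20(3).
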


\begin{proof}
	Let $\p := \p_{\omega_3}^*$. 
	Consider $p \in \p$. 
	Let $a := p \restrict \even$. 
	Then easily $p \in \p / a$.
	
	Suppose that $p$ forces in $\p$ that 
	$\{ \dot \alpha_i : i < \omega_1 \}$ 
	is a set of ordinals. 
	We will find $q$ below $p$ in $\p$ which decides 
	the value of $\dot \alpha_i$, for all 
	$i < \omega_1$. 
	Without loss of generality, we can assume that 
	each $\dot \alpha_i$ is a 
	nice $(\p / a)$-name for an ordinal. 
	It easily follows by Lemma 1.19(3) that 
	each $\dot \alpha_i$ 
	is a nice $(\p_\beta^* / a)$-name for an ordinal 
	for some $\beta < \omega_3$. 
	Thus, we can find an ordinal $\xi < \omega_3$ such that 
	$p \in \p_\xi^* / a$ and each $\dot \alpha_i$ 
	is a $(\p_\xi^* / a)$-name for an ordinal.
	
	Since $\p_\xi^*$ is $\omega_2$-distributive by 
	assumption, fix $q \le_\xi^* p$ 
	which decides in $\p_\xi^*$ the value of 
	$\dot \alpha_i$ for all $i < \omega_1$. 
	Then $q \le_\p p$ and $q$ decides in $\p$ the 
	value of $\dot \alpha_i$ for all $i < \omega_1$.
	\end{proof}

For the remainder of the section, fix a suitable mixed support forcing 
iteration $\langle \p_\beta : \beta \le \alpha \rangle$, where $\alpha < \omega_3$, 
based on a sequence of names $\langle \dot S_\gamma : \gamma \in \alpha \cap \odd \rangle$.

Before stating the next result, we make some clarifying 
remarks about names. 
Consider $\beta \le \alpha$. 
Then we have four forcing posets associated with $\beta$:
$\p_\beta^c$, $\p_\beta$, $\p_\beta^*$, and 
$\p_\beta^c \otimes \p_\beta^*$. 
If $H$ is a generic filter on 
$\p_\beta^c \otimes \p_\beta^*$, then 
$G := \tau_\beta[H]$ is a generic filter on 
$\p_\beta$, and in turn $G^c := G \cap \p_\beta^c$ 
is a generic filter on $\p_\beta^c$. 
Accordingly, if $\dot x$ is either a 
$\p_\beta$-name or a $\p_\beta^c$-name, when we talk about 
$\dot x$ in the context of statements in the forcing 
language of $\p_\beta^c \otimes \p_\beta^*$, we really 
mean the 
$(\p_\beta^c \otimes \p_\beta^*)$-name for the 
interpretation of $\dot x$ under 
$\tau_\beta[\dot H]$ or $\tau_\beta[\dot H] \cap \p_\beta^c$ 
respectively. 
Similar comments apply to $\p_\beta^c$-names 
in the context of the forcing language for $\p_\beta$.

The next two technical results will be crucial for the 
rest of the paper.

\begin{proposition}
	Let $\beta \le \alpha$, and assume that 
	$\p_\beta^*$ is $\omega_2$-distributive. 
	Suppose that $\dot x$ is a 
	$(\p_\beta^c \otimes \p_\beta^*)$-name for a set of 
	ordinals of size less than $\omega_2$. 
	Then for all $p \in \p_\beta$, there is $q \le_\beta^* p$ and a nice $\p_\beta^c$-name 
	$\dot b$ of size $\omega_1$ 
	such that $(q \restrict \even,q)$ forces in 
	$\p_\beta^c \otimes \p_\beta^*$ that $\dot x = \dot b$. 
	\end{proposition}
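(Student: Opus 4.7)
The plan is to work below the condition $(p \restrict \even, p) \in \p_\beta^c \otimes \p_\beta^*$, where by Lemma 1.15 the forcing factors as the product $(\p_\beta^c / (p \restrict \even)) \times (\p_\beta^*/p)$. I will first force with the $\omega_2$-distributive factor $\p_\beta^*/p$, then produce a nice name in the Cohen factor, and finally use distributivity a second time to push that name down to $V$ together with a suitable deciding condition.

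Concretely, let $H_2$ be generic on $\p_\beta^*/p$. In $V[H_2]$ the name $\dot x$ reinterprets as a $\p_\beta^c$-name $\dot x^*$ forced to be a set of ordinals of size $\le \omega_1$ (noting that $\omega_2$ is preserved, since $\p_\beta^*$ is $\omega_2$-distributive and $\p_\beta^c$ is ccc). By Lemma 1.3, $\p_\beta^c$ is still isomorphic to Cohen forcing in $V[H_2]$, hence ccc, so the standard nice-name construction yields a $\p_\beta^c$-name $\dot b \in V[H_2]$ of size $\omega_1$---consisting of $\omega_1$-many pairs of ordinals and countable antichains in $\p_\beta^c$---with $\Vdash_{\p_\beta^c} \dot b = \dot x^*$.

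The crux of the argument is that $\dot b$ is coded by a single $\omega_1$-sequence of elements of $V$ (antichains in $\p_\beta^c$ are subsets of a ground-model set, and ordinals are in $V$). Since $\p_\beta^*/p$ inherits $\omega_2$-distributivity from $\p_\beta^*$, this sequence already lies in $V$, so $\dot b \in V$. Letting $\dot{\dot b}$ be a $(\p_\beta^*/p)$-name for such a $\dot b$, a second application of distributivity---intersecting the $\omega_1$-many dense open sets of conditions deciding each coordinate of $\dot{\dot b}$---produces a condition $q \le_\beta^* p$ in $\p_\beta^*/p$ forcing that $\dot{\dot b}$ equals a specific ground-model name $\dot b^* \in V$.

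To conclude, $q \restrict \even = p \restrict \even$ since $q \le_\beta^* p$, so $(q \restrict \even, q) \in \p_\beta^c \otimes \p_\beta^*$ (compatibility of $q$ with $q \restrict \even$ follows from Lemma 1.10(1)), and Lemma 1.15 again yields the product $(\p_\beta^c / (q \restrict \even)) \times (\p_\beta^*/q)$ below $(q \restrict \even, q)$. Any generic on this product restricts to a $\p_\beta^*/p$-generic containing $q$, so the choice of $q$ ensures that $\dot x = \dot b^*$ is forced by $(q \restrict \even, q)$. The main obstacle is verifying that the nice name $\dot b$ built in $V[H_2]$ actually descends to $V$; this requires viewing $\dot b$ as an $\omega_1$-sized block of ground-model data and invoking distributivity both to place $\dot b$ in $V$ and to extract a single witness forced by some $q \le_\beta^* p$.
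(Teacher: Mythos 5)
Your proposal is correct and follows essentially the same route as the paper: factor $\p_\beta^c \otimes \p_\beta^*$ below $(p\restrict\even,p)$ as a product via Lemma 1.15, pass to the intermediate extension by the $\omega_2$-distributive factor $\p_\beta^*/p$, build a nice $\p_\beta^c$-name of size $\omega_1$ there using the c.c.c.\ of the Cohen part, pull the name back to $V$ by distributivity, and extract $q$ from the generic on $\p_\beta^*/p$. The only cosmetic difference is that the paper obtains $q$ directly as a condition of the $V$-generic filter $K_2$ forcing the relevant statement, rather than by intersecting dense sets deciding the coordinates of a name for $\dot b$, but these amount to the same thing.
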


\begin{proof}
Let $\dot x$ and $p$ be as above. 
Let $a := p \restrict \even$. 
For the purpose of finding the condition $q$ and the name 
$\dot b$, let us consider 
a generic filter $H$ on $\p_\beta^c \otimes \p_\beta^*$ 
which contains the condition $(a,p)$. 
By Lemma 1.16, $V[H] = V[K]$, 
where $K = K_1 \times K_2$ is a generic 
filter on $(\p_\beta^c / a) \times 
(\p_\beta^* / p)$.

Let $x := \dot x^H$. 
Then $x \in V[K_2][K_1]$. 
Since $\p_\beta^c$ is still isomorphic to Cohen forcing 
in $V[K_2]$, 
we can cover $x$ by some set of ordinals $y \in V[K_2]$ 
of size $\omega_1$. 
Now fix in $V[K_2]$ a nice $(\p_\beta^c / a)$-name 
$\dot b$ for a subset of $y$ such that $\dot b^{K_1} = x$. 
Moreover, by the maximality principle applied in $V[K_2]$, 
we can find such a nice name 
so that $\p_\beta^c / a$ forces over $V[K_2]$ that 
$\dot b$ is equal to $\dot x$ (interpreted by the 
appropriate generic filters).

Since $\dot b$ is a nice name for a subset of $y$ and $\p_\beta^c / a$ is $\omega_1$-c.c.\! 
in $V[K_2]$, $\dot b$ has size $\omega_1$ in $V[K_2]$. 
Easily $\dot b \subseteq V$. 
Therefore, since $\p_\beta^*$ is $\omega_2$-distributive, 
the name $\dot b$ is in $V$. 
As $K_2$ is a $V$-generic filter on $\p_\beta^* / p$, 
we can find $q \le_\beta^* p$ in $K_2$ 
which forces in $\p_\beta^*$ that $\p_\beta^c / a$ 
forces that $\dot b$ equals $\dot x$. 
It is now straightforward to check that $q$ and 
$\dot b$ are as required.
\end{proof}

\begin{proposition}
	Let $\beta \le \alpha$, and assume that 
	$\p_\beta^*$ is $\omega_2$-distributive. 
	Suppose that $\dot x$ is a 
	$\p_\beta$-name for a set of 
	ordinals of size less than $\omega_2$. 
	Then for all $p \in \p_\beta$, there is $q \le_\beta^* p$ and a nice $\p_\beta^c$-name 
	$\dot b$ of size $\omega_1$ 
	such that $q$ forces in $\p_\beta$ that $\dot x = \dot b$.
	\end{proposition}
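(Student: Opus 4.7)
The plan is to reduce this proposition to Proposition 2.3 by means of the projection mapping $\tau_\beta : \p_\beta^c \otimes \p_\beta^* \to \p_\beta$ of Lemma 1.20, together with the decomposition supplied by Proposition 1.13.

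I would begin by applying Proposition 2.3 to $\dot x$, viewed as a $(\p_\beta^c \otimes \p_\beta^*)$-name via the $\tau_\beta[\dot H]$ interpretation spelled out in the remarks preceding Proposition 2.3. This immediately produces $q \le_\beta^* p$ and a nice $\p_\beta^c$-name $\dot b$ of size $\omega_1$ such that $(q \restrict \even, q) \Vdash_{\p_\beta^c \otimes \p_\beta^*} \dot x = \dot b$. I would take this $q$ and $\dot b$ as the witnesses for the current proposition; the remaining task is to upgrade the forcing relation from $\p_\beta^c \otimes \p_\beta^*$ to $\p_\beta$ itself.

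I would argue the upgrade by contradiction. Suppose some $r \le_\beta q$ satisfies $r \Vdash_\beta \dot x \ne \dot b$, and set $c := r \restrict \even$; by Lemma 1.10(2), $c \le_\beta^c q \restrict \even$. Apply Proposition 1.13 to the inequality $r \le_\beta q$ to obtain $s \in \p_\beta$ with $r \le_\beta s \le_\beta^* q$ and $r \le_\beta s + c \le_\beta r$. Since $r$ is a common extension of both $c$ and $s$, the pair $(c, s)$ belongs to $\p_\beta^c \otimes \p_\beta^*$ and sits below $(q \restrict \even, q)$, so $(c, s) \Vdash_{\p_\beta^c \otimes \p_\beta^*} \dot x = \dot b$. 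Now pick a generic $H$ on $\p_\beta^c \otimes \p_\beta^*$ containing $(c, s)$ and set $G := \tau_\beta[H]$. By Lemma 1.18, $G$ is $\p_\beta$-generic, and $\tau_\beta(c, s) = s + c \in G$, so by upward closure under $\le_\beta$ (using $s + c \le_\beta r$) we have $r \in G$. Hence $\dot x^G \ne \dot b^{G \cap \p_\beta^c}$; but the forcing statement for $(c, s)$ in the product unwinds to $\dot x^{\tau_\beta[H]} = \dot b^{\tau_\beta[H] \cap \p_\beta^c}$, i.e., $\dot x^G = \dot b^{G \cap \p_\beta^c}$, a contradiction.

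The main conceptual hurdle is juggling the two different interpretations of $\dot x$: as a $\p_\beta$-name it is read off directly by a $\p_\beta$-generic filter, whereas in the forcing language of $\p_\beta^c \otimes \p_\beta^*$ it is reinterpreted through $\tau_\beta[\dot H]$. The key reason the reduction succeeds is Proposition 1.13, whose equivalence $r \le_\beta s + c \le_\beta r$ guarantees that any product-generic extending the engineered pair $(c, s)$ necessarily projects under $\tau_\beta$ to a $\p_\beta$-generic containing the offending condition $r$, forcing the two interpretations to be evaluated against compatible generics. The remaining bookkeeping with Lemmas 1.10 and 1.18 is routine.
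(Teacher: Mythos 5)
Your proposal is correct and follows essentially the same route as the paper: reduce to Proposition 2.3 via the $\tau_\beta[\dot H]$ reinterpretation of $\dot x$, then upgrade the forcing relation to $\p_\beta$ by contradiction, using Proposition 1.13 to manufacture a pair $(c,s)$ below $(q\restrict\even,q)$ whose image under $\tau_\beta$ generates a $\p_\beta$-generic containing the offending condition $r$. The only cosmetic difference is that the paper names the reinterpreted object $\dot x'$ explicitly, while you keep the two readings of $\dot x$ implicit; the substance is identical.
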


\begin{proof}
	Let $\dot x'$ be a $(\p_\beta^c \otimes \p_\beta^*)$-name for the interpretation 
	of $\dot x$ by $\tau_\beta[\dot H]$, where 
	$\dot H$ is the canonical name 
	for the generic filter on 
	$\p_\beta^c \otimes \p_\beta^*$. 
	Then obviously $\dot x'$ is a 
	$(\p_\beta^c \otimes \p_\beta^*)$-name for a set of 
	ordinals of size less than $\omega_2$. 
	By Proposition 2.3, there is $q \le_\beta^* p$ 
	and a nice $\p_\beta^c$-name 
	$\dot b$ of size $\omega_1$ 
	such that $(q \restrict \even,q)$ forces in 
	$\p_\beta^c \otimes \p_\beta^*$ that $\dot x' = \dot b$.
	
	It remains to show that $q$ forces in 
	$\p_\beta$ that $\dot x = \dot b$. 
	Suppose for a contradiction that 
	$r \le_\beta q$ and $r$ forces in $\p_\beta$ 
	that $\dot x \ne \dot b$. 
	Let $a := r \restrict \even$. 
	By Proposition 1.13, fix $r' \in \p_\beta$ 
	such that $r \le_\beta r' \le_\beta^* q$ 
	and $r \le_\beta r' + a \le_\beta r$. 
	Then $r'$ and $a$ are compatible in $\p_\beta$, 
	so $(a,r') \in \p_\beta^c \otimes \p_\beta^*$.
	
	Fix a generic filter $H$ on 
	$\p_\beta^c \otimes \p_\beta^*$ which contains $(a,r')$. 
	Note that $(a,r') \le (q \restrict \even,q)$, 
	so $(q \restrict \even,q) \in H$. 
	Let $G := \tau_\beta[H]$ and $G^c := G \cap \p_\beta^c$. 
	Then $\tau_\beta(a,r') = r' + a \in G$. 
	Since $r' + a \le_\beta r$, $r \in G$. 
	By the choice of $r$, 
	$\dot x^G \ne \dot b^{G^c}$. 
	By the choice of $q$, $(\dot x')^{H} = \dot b^{G^c}$. 
	Finally, by the choice of $\dot x'$, 
	$(\dot x')^H = \dot x^G$. 
	Thus, $\dot x^G \ne \dot b^{G^c}$ and yet 
	$\dot x^G = (\dot x')^H = \dot b^{G^c}$, which 
	is a contradiction.
\end{proof}

For a set $A \subseteq \omega_2$, let 
$\mathrm{CU}(A)$ denote the forcing poset consisting 
of closed and bounded subsets of $A$, ordered 
by end-extension. 
Assuming that $A$ is unbounded in $\omega_2$, it is easy 
to check that $\mathrm{CU}(A)$ adds a closed and cofinal 
subset of $\omega_2$ which is contained in $A$.

One of the main consequences of Proposition 2.4 is that 
our suitable mixed support forcing iteration will 
in fact add the desired 
generic filters for the club adding forcings.

\begin{proposition}
	Let $\gamma < \alpha$ be odd, and 
	assume that $\p_\gamma^*$ is $\omega_2$-distributive. 
	Then $\p_{\gamma+1}$ is forcing equivalent to 
	$\p_\gamma * \textrm{CU}(\omega_2 \setminus \dot S_\gamma)$.
\end{proposition}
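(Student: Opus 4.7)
The plan is to construct a dense embedding $\pi \colon \p_{\gamma+1} \to \p_\gamma * \textrm{CU}(\omega_2 \setminus \dot S_\gamma)$ by setting $\pi(p) := (p \restrict \gamma, p(\gamma))$, where the nice $\p_\gamma^c$-name $p(\gamma)$ is regarded as a $\p_\gamma$-name via the regular suborder inclusion $\p_\gamma^c \subseteq \p_\gamma$ (Lemma 1.3), with the convention $p(\gamma) := \check\emptyset$ when $\gamma \notin \dom(p)$. Definition 1.1(3) guarantees that $p \restrict \gamma$ forces $p(\gamma)$ to be a closed and bounded subset of $\omega_2$ disjoint from $\dot S_\gamma$, so $\pi(p)$ genuinely lands in $\p_\gamma * \textrm{CU}(\omega_2 \setminus \dot S_\gamma)$.

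Order preservation is routine from Lemma 1.5 and the remark following Lemma 1.12: if $q \le_{\gamma+1} p$, then $q \restrict \gamma \le_\gamma p \restrict \gamma$, and the clause ``$q \restrict (\gamma \cap \even) \Vdash_{\p_\gamma^c} q(\gamma)$ end-extends $p(\gamma)$'' is equivalent, via Lemma 1.12 applied to the $\Delta_0$ assertion of end-extension, to ``$q \restrict \gamma \Vdash_\gamma q(\gamma)$ end-extends $p(\gamma)$,'' which is exactly $\pi(q) \le \pi(p)$ in the two-step iteration.

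The heart of the argument is density, and this is where Proposition 2.4 comes to bear. Given $(p, \dot c) \in \p_\gamma * \textrm{CU}(\omega_2 \setminus \dot S_\gamma)$, we may assume after strengthening that $p \Vdash_\gamma \dot c \ne \emptyset$. Since $p$ forces $\dot c$ to be a bounded, hence size at most $\omega_1 < \omega_2$, subset of $\omega_2$, Proposition 2.4 yields $q \le_\gamma^* p$ and a nice $\p_\gamma^c$-name $\dot b$ of size $\omega_1$ with $q \Vdash_\gamma \dot c = \dot b$. Using the maximality principle in $\p_\gamma^c$, we can adjust $\dot b$ to a nice $\p_\gamma^c$-name $\dot b'$ which $\p_\gamma^c$ already forces to be a nonempty closed bounded subset of $\omega_2$ and for which $q \Vdash_\gamma \dot b' = \dot c$ still holds (on conditions already forcing $\dot b$ to be such, take $\dot b' = \dot b$; elsewhere substitute a default value such as $\{\check 0\}$). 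Then $r := q \cup \{(\gamma, \dot b')\}$ belongs to $\p_{\gamma+1}$, since $q = r \restrict \gamma$ forces $\dot b' \cap \dot S_\gamma = \dot c \cap \dot S_\gamma = \emptyset$, and $\pi(r) = (q, \dot b') \le (p, \dot c)$ by construction.

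Finally, preservation of compatibility is immediate from order preservation. For reflection, given a common lower bound $(r, \dot e)$ of $\pi(p_1), \pi(p_2)$, the density construction applied to $(r, \dot e)$ produces $s \in \p_{\gamma+1}$ with $\pi(s) \le (r, \dot e) \le \pi(p_i)$, and translating back via Lemmas 1.5 and 1.12 gives $s \le_{\gamma+1} p_i$, exhibiting compatibility of $p_1$ and $p_2$ in $\p_{\gamma+1}$. The main obstacle is the density step, specifically extracting a $\p_\gamma^c$-name from an arbitrary $\p_\gamma$-name for a bounded subset of $\omega_2$; this is exactly the content of Proposition 2.4, whose applicability rests on the $\omega_2$-distributivity hypothesis on $\p_\gamma^*$.
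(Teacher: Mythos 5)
Your proposal is correct and follows essentially the same route as the paper: the same map $p \mapsto (p \restrict \gamma) * p(\gamma)$, the order-equivalence via Lemma 1.12, and the density argument via Proposition 2.4 together with the maximality principle to upgrade $\dot b$ to a nice $\p_\gamma^c$-name for a nonempty closed bounded set. The only cosmetic difference is that you phrase the dense-embedding criterion in terms of compatibility preservation and reflection, whereas the paper verifies the equivalent condition that $f$ is order-reflecting and order-preserving with dense range.
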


\begin{proof}
	Let $\q := \p_\gamma * \textrm{CU}(\omega_2 \setminus \dot S_\gamma)$. 
	Define $f : \p_{\gamma+1} \to \q$ 
	by $f(p) := (p \restrict \gamma) * p(\gamma)$. 
	Let us check that $f$ actually maps into $\q$. 
	For a condition $p \in \p_{\gamma+1}$, 
	Definition 1.1(3) implies that 
	\begin{enumerate}
		\item $p \restrict \gamma \in \p_\gamma$;
		\item $p(\gamma)$ is a $\p_\gamma^c$-name for a closed and bounded subset of $\omega_2$;
		\item $p \restrict \gamma \Vdash_\gamma p(\gamma) \cap \dot S_\gamma = \emptyset$.
		\end{enumerate}
	By Lemma 1.12, (2) implies that $p(\gamma)$ is a $\p_\gamma$-name for a closed and 
	bounded subset of $\omega_2$. 
	So by (3), $p \restrict \gamma \Vdash_\gamma p(\gamma) \in \textrm{CU}(\omega_2 \setminus \dot S_\gamma)$.
	Hence, $f(p) = (p \restrict \gamma) * p(\gamma)$ is in $\q$.
	
	We claim that $f$ is a dense embedding. 
	It suffices to show that for all $p$ and $q$ in $\p_{\gamma+1}$, 
	$q \le_{\gamma+1} p$ iff $f(q) \le_\q f(p)$, and the range of $f$ is dense in $\q$.

	Consider $p$ and $q$ in $\p_{\gamma+1}$. 
	Then by Lemma 1.5(3), $q \le_{\gamma+1} p$ iff 
	\begin{itemize}
		\item[(a)] $q \restrict \gamma \le_\gamma p \restrict \gamma$;
		\item[(b)] $q \restrict (\gamma \cap \even)$ forces in $\p_\gamma^c$ that 
		$q(\gamma)$ end-extends $p(\gamma)$.
		\end{itemize}
	Assume that $q \le_{\gamma+1} p$. 
	Then $q \restrict \gamma \le_\gamma p \restrict \gamma$. 
	To see that $f(q) = (q \restrict \gamma) * q(\gamma) \le_\q 
	(p \restrict \gamma) * p(\gamma)$, it remains to show that 
	$q \restrict \gamma \Vdash_\gamma q(\gamma) \le_{\textrm{CU}(\omega_2 \setminus \dot S_\gamma)} p(\gamma)$, or  
	in other words, 
	that $q \restrict \gamma$ forces in $\p_\gamma$ that 
	$q(\gamma)$ end-extends $p(\gamma)$. 
	By Lemma 1.12, this follows from (b) above.
	
	Assume conversely that $f(q) \le_{\q} f(p)$. 
	Then $q \restrict \gamma \le_\gamma p \restrict \gamma$, and 
	$q \restrict \gamma$ forces in $\p_\gamma$ that 
	$q(\gamma) \le_{\textrm{CU}(\omega_2 \setminus \dot S_\gamma)} p(\gamma)$. 
	Hence, $q \restrict \gamma$ forces in $\p_\gamma$ that $q(\gamma)$ end-extends $p(\gamma)$. 
	By Lemma 1.12, $q \restrict (\gamma \cap \even)$ 
	forces in $\p_\gamma^c$ that 
	$q(\gamma)$ end-extends $p(\gamma)$. 
	By Lemma 1.5(3), $q \le_{\gamma+1} p$.
	
	To show that $f$ is dense, consider $r \in \q$. 
	Then $r = r_0 * \dot r_1$, where $r_0 \in \p_\gamma$ and $r_0$ forces in $\p_\gamma$ that 
	$\dot r_1 \in \textrm{CU}(\omega_2 \setminus \dot S_\gamma)$. 
	We will find $w \in \p_{\gamma+1}$ 
	such that $f(w) \le_\q r$. 
	By extending $r$ if necessary, we may 
	assume without loss of generality that 
	$r_0$ forces that $\dot r_1$ is nonempty.
	
	By Proposition 2.4, fix $t \le_\gamma^* r_0$ and a nice $\p_\gamma^c$-name 
	$\dot b$ such that $t \Vdash_\gamma \dot r_1 = \dot b$. 
	By the maximality principle for names, we may assume 
	that $\dot b$ is a nice $\p_\gamma^c$-name for a 
	nonempty closed and bounded subset of $\omega_2$. 
	We claim that $w := t \cup \{ ( \gamma, \dot b ) \}$ is a condition in $\p_{\gamma+1}$ 
	and $f(w) \le_\q r$. 
	We know that $w \restrict \gamma = t$ is in $\p_\gamma$, $w(\gamma) = \dot b$ is a nice $\p_\gamma^c$-name 
	for a nonempty closed and 
	bounded subset of $\omega_2$, and 
	$w \restrict \gamma = t$ forces in $\p_\gamma$ that $w(\gamma) = \dot b$ 
	is equal to $\dot r_1$, which is in 
	$\textrm{CU}(\omega_2 \setminus \dot S_\gamma)$ and 
	hence is disjoint from $\dot S_\gamma$. 
	By Definition 1.1, $w \in \p_{\gamma+1}$. 
	Since $t \le_\gamma^* r_0$, we have that 
	$t \le_\gamma r_0$. 
	Also, $t$ forces in $\p_\gamma$ that 
	$\dot r_1 = \dot b$, and hence obviously that 
	$\dot b \le \dot r_1$ in 
	$\textrm{CU}(\omega_2 \setminus \dot S_\gamma)$. 
	Therefore, $f(w) = t * \dot b$ extends 
	$r = r_0 * \dot r_1$ in $\q$. 
	\end{proof}

We now turn to studying conditions under which 
$\p_\alpha^*$ is $\omega_2$-distributive. 
The main result is Proposition 2.9 
below. 

\begin{lemma}
	Let $\gamma < \alpha$ be odd. 
	Assume that $\dot C$ is a 
	$(\p_\gamma^c \otimes \p_\gamma^*)$-name for a 
	club subset of $\omega_2$ which is disjoint from $\dot S_\gamma$. 
	Let $p \in \p_\gamma$ and $\dot \zeta$ be a $\p_\gamma$-name for an ordinal. 
	If $(p \restrict \even,p)$ forces in $\p_\gamma^c \otimes \p_\gamma^*$ that 
	$\dot \zeta$ is in $\dot C$, then $p$ forces in $\p_\gamma$ that 
	$\dot \zeta$ is not in $\dot S_\gamma$.
	\end{lemma}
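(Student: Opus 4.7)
I would prove this by contradiction: suppose some $q \le_\gamma p$ forces in $\p_\gamma$ that $\dot\zeta \in \dot S_\gamma$, and try to contradict the hypothesis. The key difficulty is that $\dot C$ is interpreted by a generic on $\p_\gamma^c \otimes \p_\gamma^*$, whereas $q$, $\dot\zeta$, and $\dot S_\gamma$ live in the $\p_\gamma$-language; the bridge is the projection $\tau_\gamma$ together with the reshaping result of Proposition 1.13. The plan is to produce a condition in the product lying below $(p \restrict \even, p)$ whose $\tau_\gamma$-image sits below $q$ in $\p_\gamma$, so that the two worlds can be compared inside a single generic extension.

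To do this, I would apply Proposition 1.13 to $q \le_\gamma p$, producing $q' \in \p_\gamma$ with $q \le_\gamma q' \le_\gamma^* p$ and $q \le_\gamma q' + b \le_\gamma q$, where $b := q \restrict \even$. Since $q' \restrict \even = p \restrict \even$ and $b \le_\gamma^c p \restrict \even$ by Lemma 1.10(2), the pair $(b, q')$ is a condition in $\p_\gamma^c \otimes \p_\gamma^*$ and $(b, q') \le (p \restrict \even, p)$ in the product. I would then choose a $V$-generic filter $H$ on $\p_\gamma^c \otimes \p_\gamma^*$ containing $(b, q')$ and set $G := \tau_\gamma[H]$, which is $V$-generic on $\p_\gamma$ by Lemma 1.18. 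Since $(p \restrict \even, p) \in H$, the hypothesis yields $\dot\zeta^G \in \dot C^H$ while $\dot C^H \cap \dot S_\gamma^G = \emptyset$. On the other hand, $\tau_\gamma(b, q') = q' + b$ is in $G$, and $q' + b \le_\gamma q$ forces $q$ into $G$, so by the assumption on $q$ we have $\dot\zeta^G \in \dot S_\gamma^G$, contradicting the previous line.

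I expect the main obstacle to be conceptual rather than technical: one cannot directly compare $(q \restrict \even, q)$ with $(p \restrict \even, p)$ in the product, since $q$ need not be $\le_\gamma^*$-below $p$, and Proposition 1.13 is precisely what produces the intermediate $q'$ with the correct even part while keeping $q' + b$ equivalent to $q$ as a forcing condition. Once that maneuver is in hand, everything else is bookkeeping using Lemmas 1.10 and 1.18 and the naming conventions recalled just before the lemma.
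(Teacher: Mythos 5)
Your proposal is correct and follows essentially the same route as the paper's proof: both argue by contradiction, invoke Proposition 1.13 to produce $q'$ with $q \le_\gamma q' \le_\gamma^* p$ and $q \le_\gamma q' + b \le_\gamma q$, and then pass to a generic $H$ on $\p_\gamma^c \otimes \p_\gamma^*$ containing $(b,q')$ so that $G := \tau_\gamma[H]$ simultaneously contains $q$ and witnesses $(p \restrict \even, p) \in H$. The details, including the use of Lemma 1.10(2) to see $(b,q') \le (p \restrict \even, p)$, match the paper's argument.
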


\begin{proof}
	Suppose for a contradiction that there is $q \le_\gamma p$ which forces 
	in $\p_\gamma$ that $\dot \zeta$ is in $\dot S_\gamma$. 
	Let $b := q \restrict \even$. 
	Apply Proposition 1.13 to fix 
	$q' \in \p_\gamma$ such that 
	$q \le_\gamma q' \le_\gamma^* p$ 
	and $q \le_\gamma q' + b \le_\gamma q$.
	
	Let $H$ be a generic filter on 
	$\p_\gamma^c \otimes \p_\gamma^*$ which contains 
	the condition $(b,q')$. 
	Let $G := \tau_\gamma[H]$, which is a generic filter 
	on $\p_\gamma$. 
	Let $\zeta := \dot \zeta^G$, 
	$S_\gamma := \dot S_\gamma^G$, and 
	$C := \dot C^H$. 
	Then $C \cap S_\gamma = \emptyset$.

	Since $q' \le_\gamma^* p$ and 
	$b \le_\gamma^c p \restrict \even$, it follows that 
	$(b,q') \le (p \restrict \even,p)$, and hence 
	$(p \restrict \even,p) \in H$. 
	Therefore, $\zeta \in C$. 
	Since $C$ is disjoint from $S_\gamma$, 
	$\zeta \notin S_\gamma$. 
	On the other hand, 
	$\tau_\gamma(b,q') = q' + b \in G$ 
	and $q' + b \le_\gamma q$, so $q \in G$. 
	By the choice of $q$, $\zeta \in S_\gamma$, and 
	we have a contradiction.
	\end{proof}

\begin{notation}
	Let $\beta \le \alpha$. 
	Define the relation $\le_\beta^{*,s}$ on $\p_\beta$ by letting 
	$q \le_\beta^{*,s} p$ if for all $r \le_\beta^* q$, 
	$r$ and $p$ are compatible in $\p_\beta^*$. 
	We will abbreviate the forcing poset 
	$(\p_\beta,\le_\beta^{*,s})$ as $\p_\beta^{*,s}$.
	\end{notation}

Note that $q \le_\beta^* p$ implies that 
$q \le_\beta^{*,s} p$. 
It is easy to verify that 
the forcing poset $\p_\beta^{*,s}$ is separative, and the 
identity function is a dense embedding of $\p_\beta^*$ into 
$\p_\beta^{*,s}$.

\begin{lemma}
	Let $\beta \le \alpha$. 
	Assume that $q \le_\beta^{*,s} p$. Then:
	\begin{enumerate}
		\item $p \restrict \even = q \restrict \even$;
		\item $\dom(p) \subseteq \dom(q)$;
		\item for all odd $\gamma \in \dom(p)$, 
		$p \restrict (\gamma \cap \even)$ forces in $\p_\gamma^c$ 
		that one of $p(\gamma)$ and $q(\gamma)$ is an end-extension of the other. 
	\end{enumerate}
\end{lemma}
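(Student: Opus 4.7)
The plan is to prove the three clauses in order, exploiting the fact that $q \le_\beta^* q$ is itself an allowable choice of $r$ in the definition of $q \le_\beta^{*,s} p$. This already yields a common extension $s \le_\beta^* q, p$ in $\p_\beta^*$, which is enough to establish (1) and (3); only the odd case of (2) requires building a bespoke $r \le_\beta^* q$ incompatible with $p$.

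For (1), fix such an $s \le_\beta^* q, p$. Then $s \restrict \even = q \restrict \even$ from $s \le_\beta^* q$ and $s \restrict \even = p \restrict \even$ from $s \le_\beta^* p$, so $p \restrict \even = q \restrict \even$ immediately.

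For (2), assume for contradiction that $\gamma \in \dom(p) \setminus \dom(q)$. If $\gamma$ is even, then $\gamma \in \dom(p \restrict \even) \setminus \dom(q \restrict \even)$, contradicting (1). The substantive case is $\gamma$ odd, where I will build $r \le_\beta^* q$ incompatible with $p$ in $\p_\beta^*$, contradicting $q \le_\beta^{*,s} p$. Set $r := q \cup \{(\gamma, \dot c)\}$, where $\dot c$ is a nice $\p_\gamma^c$-name for the singleton $\{\xi_0\}$ and $\xi_0 \in \omega_2 \cap \cof(\omega_1)$ is chosen below. Since $\xi_0$ has uncountable cofinality while $\dot S_\gamma$ names a subset of $\omega_2 \cap \cof(\omega)$, the side condition $r \restrict \gamma \Vdash_\gamma r(\gamma) \cap \dot S_\gamma = \emptyset$ holds, so $r \in \p_\beta$, and clearly $r \le_\beta^* q$. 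Because $\p_\gamma^c$ is $\omega_1$-c.c.\ and has size at most $\omega_2$ by Lemma 1.3, the set of ordinals that some extension of $a := p \restrict (\gamma \cap \even)$ forces to equal $\min(p(\gamma))$ has size at most $\omega_1$, hence is bounded in $\omega_2$; pick $\xi_0 \in \cof(\omega_1)$ strictly above this bound. If some $s \le_\beta^* r, p$ existed, then by Definition 1.1(5) and Lemma 1.12 the condition $a = s \restrict (\gamma \cap \even)$ would force in $\p_\gamma^c$ that $s(\gamma)$ end-extends both $\{\xi_0\}$ and $p(\gamma)$; a short case analysis, using that $p(\gamma)$ is forced to be nonempty, then forces $\min(p(\gamma)) = \xi_0$, contradicting the choice of $\xi_0$.

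For (3), let $\gamma \in \dom(p)$ be odd; by (2), $\gamma \in \dom(q)$ as well. Set $a := p \restrict (\gamma \cap \even) = q \restrict (\gamma \cap \even)$, using (1), and suppose for contradiction that some $b \le_\gamma^c a$ forces in $\p_\gamma^c$ that neither of $p(\gamma)$ and $q(\gamma)$ end-extends the other. Since any two nonempty closed bounded sets of ordinals admit a common end-extension if and only if one is an initial segment of the other, $b$ forces that no common end-extension of $p(\gamma)$ and $q(\gamma)$ exists. But applying the witness $s \le_\beta^* q, p$ from clause (1), Definition 1.1(5) at $\gamma$ yields that $a = s \restrict (\gamma \cap \even)$ forces in $\p_\gamma^c$ that $s(\gamma)$ end-extends both $q(\gamma)$ and $p(\gamma)$, so $a$ forces that a common end-extension exists, contradicting the behavior of $b$. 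The main obstacle in the proof is the odd case of (2), which is the only place the trivial witness $r = q$ is useless and one must construct a novel extension together with an $\omega_1$-c.c.\ combinatorial choice of $\xi_0$.
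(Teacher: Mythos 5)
Your proof is correct, and for clauses (1) and (3) it is essentially identical to the paper's argument: both rest on fixing a common $\le_\beta^*$-extension $s$ of $p$ and $q$ and reading off the even parts, respectively the forced end-extension statements at $\gamma$. The only substantive deviation is in the odd case of (2). The paper puts at coordinate $\gamma$ a $\p_\gamma^c$-\emph{name} for the singleton of the least ordinal of cofinality $\omega_1$ strictly above $\max(p(\gamma))$; this is automatically forced to have no common end-extension with $p(\gamma)$, with no counting required. You instead use a check name for a fixed ground-model ordinal $\xi_0$, obtained by an $\omega_1$-c.c.\ argument bounding the possible values of $\min(p(\gamma))$ below $p \restrict (\gamma \cap \even)$; this costs you the extra chain-condition step (which is fine, since $\p_\gamma^c$ is Cohen forcing and $\omega_2$ is regular) but buys a slightly simpler name. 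Both routes land in the same contradiction with $q \le_\beta^{*,s} p$, so this is a cosmetic rather than structural difference.
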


\begin{proof}
	(1) By the definition of $\le_\beta^{*,s}$, clearly $p$ and $q$ are 
	compatible in $\p_\beta^*$. 
	Fix $r \le_\beta^* p, q$. 
	Then 
	$p \restrict \even = r \restrict \even = q \restrict \even$.
	
	(2) If not, then by (1) we can fix an odd 
	ordinal 
	$\gamma \in \dom(p) \setminus \dom(q)$. 
	Fix a $\p_\gamma^c$-name $\dot a$ for the singleton consisting of 
	the least member of $\omega_2 \cap \cof(\omega_1)$ which is strictly larger than $\max(p(\gamma))$ 
	(we are using the fact that $p(\gamma)$ is forced to be nonempty by Definition 1.1(3)). 
	Clearly, $\p_\gamma^c$ forces that $\dot a$ and $p(\gamma)$ have no common 
	end-extension, and since $\p_\gamma$ forces that $\dot S_\gamma$ consists of ordinals 
	of cofinality $\omega$, $\p_\gamma$ forces that $\dot a$ is disjoint from $\dot S_\gamma$. 
	Define $s := q \cup \{ (\gamma, \dot a ) \}$. 
	Then $s \in \p_\beta$, $s \le_\beta^* q$, 
	and $s$ and $p$ are incompatible in $\p_\beta^*$. 
	This contradicts the assumption that $q \le_\beta^{*,s} p$.
	
	(3) Let $\gamma \in \dom(p) \cap \odd$. 
	Then by (2), $\gamma \in \dom(q)$. 
	Since $p$ and $q$ are compatible in $\p_\beta^*$, fix 
	$r \le_\beta^* p, q$. 
	As $\gamma \in \dom(p) \cap \dom(q)$, 
	$r \restrict (\gamma \cap \even)$ forces in $\p_\gamma^c$ that 
	$r(\gamma)$ is an end-extension of both $p(\gamma)$ and $q(\gamma)$. 
	In particular, it forces that $p(\gamma)$ and $q(\gamma)$ have a common end-extension, and 
	hence that one of them is an end-extension of the other. 
	But $r \le_\beta^* p$ implies that 
	$r \restrict \even = p \restrict \even$, 
	so $p \restrict (\gamma \cap \even)$ forces the same.
\end{proof}

\begin{proposition}
	Assume that for all odd $\gamma < \alpha$, $\p_\gamma^c \otimes \p_\gamma^*$ forces that 
	$\dot S_\gamma$ is a nonstationary subset of $\omega_2$. 
	Then both $\p_\alpha^*$ and $\p_\alpha^{*,s}$ 
	contain an $\omega_2$-closed dense subset. 
	\end{proposition}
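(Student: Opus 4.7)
My plan is to exhibit a single subset $D \subseteq \p_\alpha$ that will serve as an $\omega_2$-closed dense subset of both $\p_\alpha^*$ and $\p_\alpha^{*,s}$. Using the hypothesis, for each odd $\gamma < \alpha$ I first fix a $(\p_\gamma^c \otimes \p_\gamma^*)$-name $\dot C_\gamma$ for a club subset of $\omega_2$ disjoint from $\dot S_\gamma$. Then $D$ consists of those $p \in \p_\alpha$ such that for every odd $\gamma \in \dom(p)$ there is an ordinal $\eta_\gamma^p$ with $p \restrict \gamma \Vdash_\gamma \max(p(\gamma)) = \eta_\gamma^p$ and $(p \restrict (\gamma \cap \even), p \restrict \gamma) \Vdash_{\p_\gamma^c \otimes \p_\gamma^*} \eta_\gamma^p \in \dot C_\gamma$. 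The argument proceeds by induction on $\beta \le \alpha$, showing simultaneously that $D \cap \p_\beta$ is dense in $\p_\beta^*$ and closed under $\le_\beta^*$-descending sequences of length less than $\omega_2$; this inductively yields that $\p_\gamma^*$ is $\omega_2$-distributive for $\gamma < \beta$, so Proposition 2.4 is available at the earlier stages.

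For density, given $p \in \p_\beta$ I would construct $q \le_\beta^* p$ in $D$ by recursion on the coordinate, with the only interesting step being at an odd successor $\gamma + 1$ with $\gamma \in \dom(p)$. Working abstractly in the forcing extension by $\p_\gamma^c \otimes \p_\gamma^*$ below $(q \restrict (\gamma \cap \even), q \restrict \gamma)$, I select some ordinal $\eta \in \dot C_\gamma$ strictly above $\max(p(\gamma))$; this produces a $(\p_\gamma^c \otimes \p_\gamma^*)$-name for a single ordinal, which by Proposition 2.4 can be replaced, after strengthening $q \restrict \gamma$ in the $\le_\gamma^*$-order, by a nice $\p_\gamma^c$-name $\dot b$. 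I then let $q(\gamma)$ be the natural $\p_\gamma^c$-name for $p(\gamma) \cup \{\dot b\}$. The new ordinal $\dot b$ is forced not to belong to $\dot S_\gamma$ by Lemma 2.6, while the remaining disjointness and closedness requirements are inherited from $p(\gamma)$.

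For closure, let $\langle p_i : i < \theta \rangle$ with $\theta < \omega_2$ be descending in $\p_\beta^*$ (or in $\p_\beta^{*,s}$) with each $p_i \in D$. Their common even part is some $a$, and the union $X$ of the odd parts of their domains has size at most $\omega_1$. By Lemma 2.8(3), for each odd $\gamma \in X$, the common restriction to $\gamma \cap \even$ forces in $\p_\gamma^c$ that the family $\{p_i(\gamma) : \gamma \in \dom(p_i)\}$ is linearly ordered by inclusion, and this feature handles both orderings uniformly. Set $\eta^*_\gamma := \sup_i \eta_\gamma^{p_i}$, which is below $\omega_2$ by regularity, and let $q(\gamma)$ be the $\p_\gamma^c$-name for $\bigcup_i p_i(\gamma) \cup \{\eta^*_\gamma\}$. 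Since each $\eta_\gamma^{p_i}$ is forced into $\dot C_\gamma$, the supremum $\eta^*_\gamma$ is forced into $\dot C_\gamma$ by closedness (the cofinal sequence has length less than $\omega_2$), and then Lemma 2.6 gives $\eta^*_\gamma \notin \dot S_\gamma$. Assembling $q := a \cup \{(\gamma, q(\gamma)) : \gamma \in X\}$ yields a condition in $D$ that is $\le_\beta^*$-below every $p_i$, hence also $\le_\beta^{*,s}$-below every $p_i$.

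The main technical obstacle will be the density step, which demands a coherent recursive application of Proposition 2.4 at each odd successor stage to translate a name that naturally lives over $\p_\gamma^c \otimes \p_\gamma^*$ into one that lives over $\p_\gamma^c$ alone; this is the precise point where the inductive distributivity hypothesis is consumed. The closure step is more routine once Lemma 2.8 is in hand, and it reduces to the standard club-shooting construction, the only twist being that $\dot C_\gamma$ is a $(\p_\gamma^c \otimes \p_\gamma^*)$-name rather than a $\p_\gamma$-name, which is precisely what Lemma 2.6 is designed to mediate.
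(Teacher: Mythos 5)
Your overall route is the paper's: fix names $\dot C_\gamma$ for clubs disjoint from the $\dot S_\gamma$, define the dense set by requiring the top point of each $p(\gamma)$ to be forced into $\dot C_\gamma$ by the pair $(p \restrict (\gamma \cap \even), p \restrict \gamma)$, and prove density and closure by a simultaneous induction that feeds $\omega_2$-distributivity of the earlier $\p_\xi^*$ into Proposition 2.3/2.4 and uses Lemma 2.6 to convert ``forced into $\dot C_\gamma$ by the product'' into ``forced out of $\dot S_\gamma$ by $\p_\gamma$.'' However, there is a genuine mismatch between your definition of $D$ and your density construction. You require that $p \restrict \gamma$ \emph{decide} $\max(p(\gamma))$ as an actual ordinal $\eta_\gamma^p$. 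But the object Proposition 2.3 hands you is only a nice $\p_\gamma^c$-name $\dot b$ of size $\omega_1$; since $\p_\gamma^c$ is Cohen forcing and a $\le_\gamma^*$-extension is forbidden from strengthening the even part, $\dot b$ will in general not be decided by any condition you are allowed to pass to. So the condition $q$ you build with $q(\gamma) = p(\gamma) \cup \{\dot b\}$ need not belong to your $D$, and the density argument as written does not close. Your closure step then leans on exactly this over-specification, since it forms $\eta^*_\gamma = \sup_i \eta_\gamma^{p_i}$ as a genuine ordinal.

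The repair is local and brings you back to the paper's argument: drop the decidedness clause, so that $D$ only demands that $(p \restrict (\gamma \cap \even), p \restrict \gamma)$ force $\max(p(\gamma)) \in \dot C_\gamma$, where $\max(p(\gamma))$ is itself a $\p_\gamma^c$-name. Density then works exactly as you describe (apply Proposition 2.3 --- not 2.4, since the relevant name lives over $\p_\gamma^c \otimes \p_\gamma^*$ --- to the name for $p(\gamma)$ together with the least member of $\dot C_\gamma$ above $\max(p(\gamma))$). In the closure step, replace $\eta^*_\gamma$ by the $\p_\gamma^c$-name $\sup_i \max(p_i(\gamma))$; since $(q \restrict (\gamma \cap \even), q \restrict \gamma)$ extends each $(a \restrict \gamma, p_i \restrict \gamma)$ and $\dot C_\gamma$ is forced to be club, this supremum is still forced into $\dot C_\gamma$, and Lemma 2.6 applies verbatim to the name. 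One further point to make explicit: your ``recursion on the coordinate'' for density must invoke the already-established closure at limit stages of the recursion (and, when $\cf(\beta) < \omega_2$, some bookkeeping such as confining $\dom(p) \cap \even$ to an initial segment), which is where the paper spends most of its effort in the limit case; your simultaneous-induction framing covers this in principle but should be spelled out.
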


\begin{proof}
	For each odd $\gamma < \alpha$, 
	fix a $(\p_\gamma^c \otimes \p_\gamma^*)$-name $\dot C_\gamma$ for a club subset 
	of $\omega_2$ which is disjoint from $\dot S_\gamma$. 
	For each $\beta \le \alpha$, define $D_\beta$ as the set of conditions 
	$p \in \p_\beta$ such that for all odd $\gamma \in \dom(p)$, 
	$(p \restrict (\gamma \cap \even),p \restrict \gamma)$ 
	forces in $\p_\gamma^c \otimes \p_\gamma^*$ that $\max(p(\gamma)) \in \dot C_\gamma$. 
	Observe that for all $\xi \le \beta \le \alpha$, $D_\xi \subseteq D_\beta$, 
	and if $p \in D_{\beta}$, 
	then $p \restrict \xi \in D_{\xi}$.
	
	We claim that for all $\beta \le \alpha$, 
	$D_\beta$ is an $\omega_2$-closed dense subset of both 
	$\p_\beta^*$ and $\p_\beta^{*,s}$. 
	The proof will be by induction on $\beta$, with the case $\beta = \alpha$ 
	concluding the proof of the proposition. 
	So fix $\beta \le \alpha$, and assume that for all $\xi < \beta$, 
	$D_{\xi}$ is an $\omega_2$-closed dense subset of both 
	$\p_\xi^*$ and $\p_{\xi}^{*,s}$. 
	It follows that for all $\xi < \beta$, the forcing poset $\p_\xi^*$ 
	is $\omega_2$-distributive, since it is forcing equivalent to an $\omega_2$-closed 
	forcing poset.
	
	We begin by proving closure. 
	We will show that any 
	$\le_\beta^{*,s}$-descending sequence of conditions in 
	$D_\beta$ of length a limit 
	ordinal less than $\omega_2$ has a $\le_\beta^*$-lower 
	bound in $D_\beta$. 
	Note that this implies that $D_\beta$ is $\omega_2$-closed 
	in both $\p_\beta^*$ and $\p_\beta^{*,s}$. 
	So consider a $\le_\beta^{*,s}$-descending sequence 
	$\langle p_i : i < \delta \rangle$ of conditions in $D_\beta$, where $\delta < \omega_2$ 
	is a limit ordinal. 
	We will find $q \in D_\beta$ such that 
	$q \le_\beta^{*} p_i$ for all $i < \delta$. 
	Let $a := p_0 \restrict \even$. 
	Then by Lemma 2.8(1), 
	for all $i < \delta$, $p_i \restrict \even = a$.
	
	Define $q$ as follows. 
	Let $q \restrict \even := a$. 
	Let $\dom(q) \cap \odd := \bigcup \{ \dom(p_i) \cap \odd : i < \delta \}$. 
	Consider an odd ordinal $\gamma$ in $\dom(q)$. 
	By Lemma 2.8(3), $a \restrict \gamma$ forces in 
	$\p_\gamma^c$ that 
	$\{ p_i(\gamma) : i < \delta \}$ is a family 
	of closed and bounded subsets of $\omega_2$ which are 
	pairwise comparable under end-extension. 
	It easily follows that $a \restrict \gamma$ forces 
	that the union of this family is bounded in $\omega_2$ 
	and is closed below its supremum. 
	Let $q(\gamma)$ be a nice $\p_\gamma^c$-name for a 
	nonempty closed and bounded subset of 
	$\omega_2$ which, if $a \restrict \gamma$ is in 
	the generic filter on $\p_\gamma^c$, is equal to 
	the union of $\{ p_i(\gamma) : i < \delta \}$ together with the ordinal 
	$\sup \{ \max(p_i(\gamma)) : i < \delta \}$.
	
	We prove by induction on $\xi \le \beta$ that 
	$q \restrict \xi \in D_\xi$ and 
	$q \restrict \xi \le_\xi^* p_i \restrict \xi$ for all $i < \delta$. 
	It then follows that $q \in D_\beta$ 
	and $q \le_\beta^* p_i$ for all $i < \delta$. 
	Referring to Definition 1.1, the only nontrivial case to consider is when 
	$\xi = \gamma + 1$ for an odd ordinal $\gamma$.
	
	So assume that $\gamma < \beta$ is odd and $q \restrict \gamma$ is as required. 
	Then $q \restrict \gamma \le_\gamma^* p_i \restrict \gamma$ for all $i < \delta$. 
	By the definition of $D_\beta$, 
	each $p_i$ with $\gamma \in \dom(p_i)$ 
	satisfies that $(p_i \restrict (\gamma \cap \even),p_i \restrict \gamma) 
	= (a \restrict \gamma,p_i \restrict \gamma)$ 
	forces in $\p_\gamma^c \otimes \p_\gamma^*$ that $\max(p_i(\gamma)) \in \dot C_\gamma$. 
	The fact that 
	$q \restrict \gamma \le_\gamma^* p_i \restrict \gamma$ implies that 
	$(q \restrict (\gamma \cap \even),q \restrict \gamma) = 
	(a \restrict \gamma,q \restrict \gamma)$ is below 
	$(a \restrict \gamma,p_i \restrict \gamma)$ in 
	$\p_\gamma^c \otimes \p_\gamma^*$. 
	Therefore, $(q \restrict (\gamma \cap \even),q \restrict \gamma)$ forces in 
	$\p_\gamma^c \otimes \p_\gamma^*$ that $\max(p_i(\gamma)) \in \dot C_\gamma$.
	
	Since the above is true for all $i < \delta$ and $\dot C_\gamma$ is a name 
	for a club, it follows that 
	$(q \restrict (\gamma \cap \even),q \restrict \gamma)$ forces in 
	$\p_\gamma^c \otimes \p_\gamma^*$ that 
	$\sup \{ \max(p_i(\gamma)) : i < \delta \} = \max(q(\gamma)) \in \dot C_\gamma$. 
	By Lemma 2.6, $q \restrict \gamma$ forces in $\p_\gamma$ that 
	$\max(q(\gamma)) \notin \dot S_\gamma$. 
	Since $q \restrict \gamma$ forces that 
	any other member of $q(\gamma)$ is in $p_i(\gamma)$ 
	for some $i < \delta$, and 
	$q \restrict \gamma \le_\gamma 
	p_i \restrict \gamma$ for all $i < \delta$, 
	it follows that $q \restrict \gamma$ forces that 
	$q(\gamma)$ is disjoint from $\dot S_\gamma$. 
	Thus, $q \restrict (\gamma+1)$ is 
	in $\p_{\gamma+1}$. 
	Now the inductive hypothesis and the above 
	arguments imply that 
	$q \restrict (\gamma+1) \in D_{\gamma+1}$ and 
	$q \restrict (\gamma+1) \le_{\gamma+1}^* p_i \restrict (\gamma+1)$ for all $i < \delta$. 
	This completes the proof of closure.

	It remains to show that $D_\beta$ is a dense subset of 
	$\p_\beta^*$ and $\p_\beta^{*,s}$. 
	Note that it suffices to prove that $D_\beta$ is 
	dense in $\p_\beta^*$. 
	Consider $p \in \p_\beta^*$, and we will find 
	$q \le_\beta^* p$ in $D_\beta$. 
	First, assume that $\beta = \xi + 1$ is a successor ordinal. 
	If $\xi$ is even, then fix 
	$q_0 \le_\xi^* p \restrict \xi$ in $D_\xi$ by the inductive hypothesis. 
	Then $q_0 \cup \{ (\xi,p(\xi))\} \le_\beta^* p$ is in $D_{\beta}$. 
	Now suppose that $\xi$ is odd. 
	If $\xi \notin \dom(p)$, then 
	fix $q_0 \le_\xi^* p \restrict \xi$ in $D_\xi$ by the inductive hypothesis. 
	Then $q_0 \le_\beta^* p$ and 
	$q_0 \in D_{\beta}$.
	
	Suppose that $\xi \in \dom(p)$. 
	Let $\dot x$ be a $(\p_\xi^c \otimes \p_\xi^*)$-name for 
	$p(\xi)$ together with the least member of $\dot C_\xi$ strictly above $\max(p(\xi))$. 
	Since $\p_\xi^*$ is $\omega_2$-distributive by the inductive hypothesis, 
	by Proposition 2.3 we can fix 
	$q_0 \le_\xi^* p \restrict \xi$ and a nice 
	$\p_\xi^c$-name $\dot b$ such that 
	$(q_0 \restrict \even,q_0)$ forces in 
	$\p_\xi^c \otimes \p_\xi^*$ 
	that $\dot b = \dot x$. 
	By the maximality principle for names, we may assume without loss of generality 
	that $\dot b$ is a nice $\p_\xi^c$-name for a nonempty closed and bounded subset of $\omega_2$. 
	Note that $(q_0 \restrict \even,q_0)$ forces in 
	$\p_\xi^c \otimes \p_\xi^*$ 
	that $\max(\dot b) = \max(\dot x) \in \dot C_\xi$. 
	By Lemma 2.6, $q_0$ forces in $\p_\xi$ that 
	$\max(\dot b) \notin \dot S_\xi$. 
	Now fix $r_0 \le_\xi^* q_0$ in $D_\xi$ by the inductive hypothesis. 
	Let $r := r_0 \cup \{ (\xi,\dot b) \}$. 
	Since $r_0 \le_\xi q_0$, $r_0$ forces in 
	$\p_\xi$ that $\max(\dot b) \notin \dot S_\xi$. 
	As $r_0 \le_\xi p \restrict \xi$, $r_0$ forces 
	in $\p_\xi$ that 
	$\dot b$ is disjoint from $\dot S_\xi$. 
	Thus, $r \in \p_{\beta}$. 
	Also, clearly $r$ is in 
	$D_{\beta}$ and $r \le_\beta^* p$.
	
	Secondly, assume 
	that $\beta$ is a limit ordinal. 
	If $\cf(\beta) \ge \omega_2$, then  
	for some $\xi < \beta$, $\dom(p) \subseteq \xi$, and hence 
	$p \in \p_\xi$. 
	By the inductive hypothesis, we can fix $q \le_\xi^* p$ in $D_\xi$. 
	Then $q \le_\beta^* p$ is in $D_\beta$. 
	
	Suppose that $\cf(\beta) < \omega_2$. 
	Fix a strictly increasing and continuous sequence 
	$\langle \beta_i : i < \cf(\beta) \rangle$ 
	which is cofinal in $\beta$, and let 
	$\beta_{\cf(\beta)} = \beta$. 
	Since $\dom(p) \cap \even$ is finite, we may assume that 
	$\dom(p) \cap \even \subseteq \beta_0$. 
	We define by induction a $\le_\beta^*$-descending sequence of conditions 
	$\langle p_i : i \le \cf(\beta) \rangle$ below $p$ such that for each $i \le \cf(\beta)$,  
	$p_i \restrict \beta_i \in D_{\beta_i}$ if $i > 0$, 
	and $p_i \restrict [\beta_i,\beta) = p \restrict [\beta_i,\beta)$.
		
	Let $p_0 := p$. 
	Let $i < \cf(\beta)$, and assume that $p_j$ is defined as required for all $j \le i$. 
	By the inductive hypothesis, fix 
	$p_{i+1}^{-} \le_{\beta_{i+1}}^* p_i \restrict \beta_{i+1}$ 
	in $D_{\beta_{i+1}}$. 
	Now let $p_{i+1} := p_{i+1}^- \cup p \restrict [\beta_{i+1},\beta)$. 
	Then easily $p_{i+1}$ is as required.
	
	Let $\delta \le \cf(\beta)$ be a limit ordinal, and assume that $p_i$ is defined as 
	required for all $i < \delta$. 
	Then for all $i < j < \delta$, $p_j \le_\beta^* p_i$. 
	Since $\dom(p) \cap \even \subseteq \beta_0$, it easily follows that 
	for all $i < j < \delta$, 
	$p_j \restrict \beta_j \le_{\beta_\delta}^* p_i \restrict \beta_i$. 
	Therefore, $\langle p_i \restrict \beta_i : i < \delta \rangle$ 
	is a $\le_{\beta_\delta}^*$-descending sequence in $D_{\beta_\delta}$. 
	Since we have already proven the $\omega_2$-closure of $D_{\beta_\delta}$, 
	we can find $p_{\delta}^- \in D_{\beta_\delta}$ 
	such that $p_{\delta}^- \le_{\beta_\delta}^* p_i \restrict \beta_i$ for all 
	$i < \delta$. 
	As $\sup_{i < \delta} \beta_i = \beta_{\delta}$, 
	it easily follows that $p_{\delta}^- \le_{\beta_\delta}^* p_i \restrict \beta_\delta$ 
	for all $i < \delta$. 
	Let $p_{\delta} := p_{\delta}^- \cup p \restrict [\beta_\delta,\beta)$. 
	Then $p_{\delta} \le_\beta^* p_i$ for all $i < \delta$ and 
	$p_\delta \restrict \beta_\delta = p_{\delta}^- \in D_{\beta_\delta}$.
\end{proof}

The next result describes how we will use the 
preparation forcing in the proof of the main 
consistency result.

\begin{lemma}
	Assume that $2^{\omega_1} = \omega_2$. 
	Suppose that the forcing poset 
	$\p_\alpha^{*,s}$ contains an 
	$\omega_2$-closed dense subset. 
	Let $G \times H$ be a generic filter on $\add(\omega,\omega_2) \times \add(\omega_2)$. 
	Then in $V[G \times H]$, for any condition $(a,p) \in \p_\alpha^c \otimes \p_\alpha^*$ 
	such that $a \le_\alpha^c p \restrict \even$, 
	there exists a generic filter $K$ on 
	$\p_\alpha^c \otimes \p_\alpha^*$ 
	which contains $(a,p)$, and moreover, 
	$V[G \times H]$ is a generic extension of 
	$V[K]$ by an $\omega_1$-c.c.\! forcing poset.
	\end{lemma}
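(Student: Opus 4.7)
The plan is to construct $K$ as the upward closure in $\p_\alpha^c \otimes \p_\alpha^*$ of a product $V$-generic $\bar G \times \bar H$ on $(\p_\alpha^c / a) \times (\p_\alpha^*/p)$, which by Lemma 1.15 coincides with $(\p_\alpha^c \otimes \p_\alpha^*)/(a,p)$. The component $\bar G$ will be extracted from $G$ and $\bar H$ from $H$.

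For the Cohen part: by Lemma 1.3, $\p_\alpha^c \cong \add(\omega,\lambda)$ where $\lambda := \ot(\alpha \cap \even) \le \omega_2$, and by homogeneity of Cohen forcing $\p_\alpha^c/a$ is itself isomorphic to $\add(\omega,\lambda)$. Using the decomposition $\add(\omega,\omega_2) \cong \add(\omega,\lambda) \times \add(\omega,\omega_2)$, $G$ factors as $G_0 \times G_1$ with $G_0$ being $V$-generic for $\add(\omega,\lambda)$ and $G_1$ being $V[G_0]$-generic for $\add(\omega,\omega_2)$. Transferring $G_0$ through the isomorphism yields a $V$-generic filter $\bar G$ on $\p_\alpha^c/a$, whose upward closure in $\p_\alpha^c$ contains $a$.

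For the distributive part: by Lemma 1.19(1), $|\p_\alpha| \le \omega_2$, and by hypothesis there is an $\omega_2$-closed dense $D \subseteq \p_\alpha^{*,s}$. Thus $D$ restricted to conditions below $p$ is an atomless, separative, $\omega_2$-closed forcing with a dense subset of size at most $\omega_2$; under $2^{\omega_1} = \omega_2$ such a forcing is equivalent to $\add(\omega_2)$ by the standard uniqueness result for $\omega_2$-closed forcings of size $\omega_2$. Hence $H$ translates through the equivalence to a $V$-generic on $D$ below $p$, which generates a $V$-generic $\bar H$ on $\p_\alpha^{*,s}/p$, and since the identity embeds $\p_\alpha^*$ densely into $\p_\alpha^{*,s}$, also on $\p_\alpha^*/p$. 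Because the translation is a forcing equivalence, $V[\bar H] = V[H]$.

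Since $G \times H$ is product-generic over $V$, the factorization ensures that $\bar G$, $G_1$, and $\bar H$ are jointly mutually $V$-generic. Hence $\bar G \times \bar H$ is $V$-generic for $(\p_\alpha^c/a) \times (\p_\alpha^*/p)$, and its upward closure $K$ in $\p_\alpha^c \otimes \p_\alpha^*$ is a $V$-generic filter containing $(a,p)$. Finally, $V[K] = V[\bar G][\bar H] = V[\bar G][H]$, and $G_1$ remains $V[K]$-generic for $\add(\omega,\omega_2)$ by mutual genericity, giving
$V[G \times H] = V[\bar G][G_1][H] = V[K][G_1],$
a generic extension of $V[K]$ by the $\omega_1$-c.c.\ forcing $\add(\omega,\omega_2)$.

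The main obstacle is carefully tracking mutual genericity through the factorization of the original product generic $G \times H$, so that $G_1$ survives as generic over $V[K]$; the rest is bookkeeping plus standard facts about Cohen forcing and about $\omega_2$-closed forcings of size $\omega_2$ under $2^{\omega_1} = \omega_2$.
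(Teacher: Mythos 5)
Your proposal is correct and follows essentially the same route as the paper: factor $\add(\omega,\omega_2)$ so that one factor realizes $\p_\alpha^c/a$, identify the dense $\omega_2$-closed set below $p$ with $\add(\omega_2)$ using the standard uniqueness fact under $2^{\omega_1}=\omega_2$, form the product generic on $(\p_\alpha^c/a)\times(\p_\alpha^*/p)=(\p_\alpha^c\otimes\p_\alpha^*)/(a,p)$ via Lemma 1.15, and take its upward closure as $K$, with the leftover Cohen factor giving the $\omega_1$-c.c.\ extension. The only cosmetic difference is that you phrase the leftover factor as $\add(\omega,\omega_2)$ by homogeneity where the paper writes $\add(\omega,\omega_2\setminus\delta)$; the mutual-genericity bookkeeping is the same.
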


\begin{proof}
	Fix an $\omega_2$-closed dense subset $D$ of $\p_\alpha^{*,s}$. 
	Consider a condition 
	$(a,p) \in \p_\alpha^c \otimes \p_\alpha^*$ such that 
	$a \le_\alpha^c p \restrict \even$. 	
	Let $D_p := \{ q \in D : q \le_\alpha^{*,s} p \}$. 
	Then clearly $D_p$ is an $\omega_2$-closed dense 
	subset of $\p_\alpha^{*,s} / p$. 
	Since $\p_\alpha^{*,s}$ is separative, 
	obviously $(D_p,\le_\alpha^{*,s})$ is also separative, 
	and since $\p_\alpha$ has size $\omega_2$, so does $D_p$. 
	By standard forcing facts, it follows that 
	$(D_p,\le_\alpha^{*,s})$ is forcing equivalent to $\add(\omega_2)$.
	
	We also know by Lemma 1.3 
	that $\p_\alpha^c$ is isomorphic to $\add(\omega,\ot(\alpha \cap \even))$. 
	Since $\alpha < \omega_3$, $\p_\alpha^c$ is isomorphic to a regular suborder of 
	$\add(\omega,\omega_2)$ of the form 
	$\add(\omega,\delta)$ for some $\delta \le \omega_2$. 
	By standard facts, for any $s \in \add(\omega,\delta)$, 
	$\add(\omega,\delta) / s$ is isomorphic 
	to $\add(\omega,\delta)$. 
	Hence, $\p_\alpha^c / a$ is isomorphic to 
	$\add(\omega,\delta)$. 
	So $\add(\omega,\omega_2)$ is isomorphic to 
	$(\p_\alpha^c / a) \times 
	\add(\omega,\omega_2 \setminus \delta)$.
	
	From these facts, we can obtain in $V[H]$ a $V$-generic filter $H_1$ on $(D_p,\le_{\alpha}^{*,s})$ 
	such that $V[H] = V[H_1]$, and in $V[H][G]$ 
	we can obtain a $V[H]$-generic filter 
	$H_2$ on $\p_\alpha^c / a$ such that 
	$V[G \times H] = V[H][G]$ 
	is a generic extension of $V[H][H_2]$ by 
	the $\omega_1$-c.c.\! forcing $\add(\omega,\omega_2 \setminus \delta)$.
	
	Now the upwards closure $H_1'$ of $H_1$ in 
	$\p_\alpha^{*,s}$ 
	is a $V$-generic filter on $\p_\alpha^{*,s}$ 
	which contains $p$, and 
	$V[H] = V[H_1] = V[H_1']$. 
	Since the identity function is a dense embedding of 
	$\p_\alpha^*$ into $\p_\alpha^{*,s}$, 
	$H_1'$ is also a $V$-generic filter on $\p_\alpha^*$ 
	which contains $p$. 
	So $H_1' / p$ is a $V$-generic filter on 
	$\p_\alpha^* / p$ and 
	$V[H] = V[H_1'] = V[H_1' / p]$. 
	Thus, $H_2 \times (H_1' / p)$ is a $V$-generic filter on 
	$(\p_\alpha^c / a) \times (\p_\alpha^* / p) = 
	(\p_\alpha^c \otimes \p_\alpha^*) / (a,p)$. 
	Letting $K$ be the upwards closure of this filter in 
	$\p_\alpha^c \otimes \p_\alpha^*$, 
	$K$ is a generic filter on 
	$\p_\alpha^c \otimes \p_\alpha^*$ 
	which contains $(a,p)$, and 
	$V[K] = V[H_2 \times (H_1' / p)] = V[H][H_2]$. 
	And from the above, $V[G \times H]$ is a generic extension of 
	$V[H][H_2] = V[K]$ by an $\omega_1$-c.c.\! forcing poset.
	\end{proof}

We need one more lemma before proceeding to the main 
result of the paper.

\begin{lemma}
	Assume that for all $\beta < \alpha$, 
	$\p_\beta$ preserves $\omega_1$. 
	Suppose that $\langle p_i : i < \delta \rangle$ 
	is a $\le_\alpha^*$-descending sequence of conditions, 
	where $\delta \in \omega_2 \cap \cof(\omega_1)$. 
	Then there is $q$ such that $q \le_\alpha^* p_i$ 
	for all $i < \delta$.
	\end{lemma}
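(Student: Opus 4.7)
My plan is to define $q$ by taking the coordinatewise union of the $p_i$'s at odd coordinates and to verify, by induction on $\xi \le \alpha$, that $q \restrict \xi$ is a valid condition satisfying $q \restrict \xi \le^*_\xi p_i \restrict \xi$ for all $i < \delta$.

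By Lemma 2.8(1), since $\le^*_\alpha$ refines $\le^{*,s}_\alpha$, the even parts agree: set $a := p_0 \restrict \even$, so $p_i \restrict \even = a$ for every $i < \delta$. By Lemma 2.8(2), the odd-indexed domains $\dom(p_i) \cap \odd$ are nested in $i$. I would set $q \restrict \even := a$ and $\dom(q) \cap \odd := \bigcup_{i < \delta} \dom(p_i) \cap \odd$, which has cardinality at most $\omega_1 \cdot \omega_1 = \omega_1 < \omega_2$, meeting the support bound of Definition 1.1(4). For each odd $\gamma \in \dom(q)$, let $I_\gamma := \{ i < \delta : \gamma \in \dom(p_i) \}$, an end segment of $\delta$; by transitivity of $\le^*_\alpha$, $a \restrict \gamma$ forces in $\p_\gamma^c$ that $\langle p_i(\gamma) : i \in I_\gamma \rangle$ is a monotonically end-extending family of closed bounded subsets of $\omega_2$. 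Using the maximality principle, I would define $q(\gamma)$ as a nice $\p_\gamma^c$-name for a nonempty closed bounded subset of $\omega_2$ which, whenever $a \restrict \gamma$ lies in the generic filter on $\p_\gamma^c$, equals $\bigcup_{i \in I_\gamma} p_i(\gamma)$ together with its supremum $s_\gamma$.

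The limit and even successor stages of the induction are routine from the construction and Lemma 1.5. The only delicate case is the odd successor $\xi = \gamma + 1$, where I must verify that $q \restrict \gamma$ forces in $\p_\gamma$ that $q(\gamma) \cap \dot S_\gamma = \emptyset$. For this, fix a $\p_\gamma$-generic $G$ with $q \restrict \gamma \in G$ and set $G^c := G \cap \p_\gamma^c$. The points in $\bigcup_{i \in I_\gamma} p_i(\gamma)^{G^c}$ are disjoint from $\dot S_\gamma^G$ because each $p_j \restrict \gamma \in G$ (since $q \restrict \gamma \le_\gamma p_j \restrict \gamma$) and forces $p_j(\gamma) \cap \dot S_\gamma = \emptyset$. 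For the supremum $s_\gamma$, there are two subcases. If $s_\gamma$ is achieved in the union, it lies in some $p_j(\gamma)^{G^c}$ and is handled the same way. If $s_\gamma$ is a strict limit, then since $\p_\gamma^c$ is ccc and preserves cofinalities, $\delta$ still has cofinality $\omega_1$ in $V[G^c]$, and the strictly increasing cofinal subsequence of $\max p_i(\gamma)^{G^c}$ witnesses that $\cf^{V[G^c]}(s_\gamma) = \omega_1$.

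The main obstacle is transferring this cofinality computation from $V[G^c]$ to $V[G]$, since $\dot S_\gamma^G \subseteq \omega_2 \cap \cof^{V[G]}(\omega)$ is interpreted in $V[G]$. The hypothesis that $\p_\gamma$ preserves $\omega_1$ gives $\omega_1^{V[G]} = \omega_1^V$ but does not by itself preserve $\cf(s_\gamma) = \omega_1$. The plan is to exploit the product-like structure from Lemma 1.15 together with the projection $\tau_\gamma$ of Definition 1.17: the step from $V[G^c]$ to $V[G]$ is essentially captured by forcing with the quotient $\p_\gamma^* / p$, and one should argue, in the spirit of the closure analysis of Proposition 2.9 and using the inductive context, that this quotient adds no new $\omega$-cofinal sequence in $s_\gamma$. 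This yields $\cf^{V[G]}(s_\gamma) \ne \omega$ and hence $s_\gamma \notin \dot S_\gamma^G$, completing the inductive verification.
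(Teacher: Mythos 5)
Your construction of $q$ is exactly the paper's: same even part, same union of odd domains, same name $q(\gamma)$ for the union of the $p_i(\gamma)$ together with the supremum of their maxima, and the same case split (supremum attained versus a strict limit). The gap is in your final paragraph, where you declare a ``main obstacle'' that does not exist and then propose an unjustified detour around it. You claim that preservation of $\omega_1$ by $\p_\gamma$ ``does not by itself preserve $\cf(s_\gamma)=\omega_1$,'' but it does, by a standard fact: the witnessing increasing cofinal sequence $\langle \max(p_i(\gamma))^{G^c}: i \in I_\gamma\rangle$ of length (cofinally) $\delta$ already lives in $V[G^c]\subseteq V[G]$, so in $V[G]$ one has $\cf(s_\gamma)=\cf(\delta)=\cf(\omega_1^V)$, and since $\omega_1^V$ remains the first uncountable cardinal of $V[G]$ it is regular there. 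Hence $\cf^{V[G]}(s_\gamma)=\omega_1\ne\omega$ (and a fortiori $\cf^{V}(s_\gamma)\ne\omega$, which is what is needed since $\dot S_\gamma$ is a nice name for a subset of the ground-model set $\omega_2\cap\cof(\omega)$). This one line is precisely how the paper closes the argument, and it is the only place the hypothesis ``$\p_\gamma$ preserves $\omega_1$'' is used.

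Your proposed substitute --- arguing via Lemma 1.15 and $\tau_\gamma$ that the quotient $\p_\gamma^*/p$ ``adds no new $\omega$-cofinal sequence in $s_\gamma$'' --- is not justified by the hypotheses of the lemma, which assume only $\omega_1$-preservation, not any distributivity of $\p_\gamma^*$. Worse, such a claim is essentially the $\omega_2$-distributivity of $\p_\gamma^*$, which is what Section 3 is trying to prove, and Lemma 2.11 is itself an ingredient of that proof (via Lemma 3.7); leaning on it here risks circularity. Replace your last paragraph with the cofinality-transfer observation above and the proof is complete and coincides with the paper's.
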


\begin{proof}
	Let $a := p_0 \restrict \even$. 
	Then $a = p_i \restrict \even$ for all $i < \delta$. 
	Define $q$ as follows. 
	Let $q \restrict \even = a$ and 
	$\dom(q) \cap \odd := \bigcup \{ \dom(p_i) \cap \odd : 
	i < \delta \}$. 
	For each odd $\gamma \in \dom(q)$, let 
	$q(\gamma)$ be a $\p_\gamma^c$-name for a nonempty 
	closed and bounded subset of $\omega_2$ such that, 
	assuming $a \restrict \gamma$ is in the generic filter, 
	then $q(\gamma)$ is the union of 
	$\{ p_i(\gamma) : i < \delta \}$ together with 
	the supremum of 
	$\{ \max(p_i(\gamma)) : i < \delta \}$.
	
	To see that $q$ is a condition, it suffices to show 
	that for all odd $\gamma < \alpha$, assuming 
	that $q \restrict \gamma$ is in $\p_\gamma$ and 
	is $\le_\gamma^*$-below $p_i \restrict \gamma$ for all 
	$i < \delta$, then $q \restrict \gamma$ forces in 
	$\p_\gamma$ that $\max(q(\gamma)) \notin \dot S_\gamma$. 
	But since $\delta$ has cofinality $\omega_1$, 
	$a \restrict \gamma$ forces that 
	$\max(q(\gamma))$ has cofinality 
	$\omega_1$, or for some $i < \delta$, 
	$\max(q(\gamma)) = \max(p_j(\gamma))$ for all 
	$i \le j < \delta$. 
	As $\dot S_\gamma$ is a 
	$\p_\gamma$-name for a subset of 
	$\omega_2 \cap \cof(\omega)$ 
	and $\p_\gamma$ preserves $\omega_1$, 
	in either case 
	$q \restrict \gamma$ forces that $\max(q(\gamma))$ is 
	not in $\dot S_\gamma$. 
	\end{proof}

\section{The Consistency Result}

Let $\kappa$ be a Mahlo cardinal and assume that \textsf{GCH} holds. 
For example, if $\kappa$ is Mahlo, then $\kappa$ is Mahlo in $L$, so we can take 
our ground model to be $L$.
We will prove that there exists 
a forcing poset which collapses $\kappa$ to become $\omega_2$, forces that 
$2^\omega = \omega_3$, and forces that every stationary subset of $\omega_2 \cap \cof(\omega)$ 
reflects to an ordinal in $\omega_2$ with cofinality $\omega_1$. 
The forcing poset will be of the form $\R_\kappa * \p_{\kappa^+}$, where $\R_\kappa$ 
is a preparation forcing which collapses $\kappa$ to become $\omega_2$ 
and $\p_{\kappa^+}$ is a suitable mixed support forcing iteration in $V^{\R_\kappa}$ for killing 
nonreflecting sets.

To begin, let us define in the ground model $V$ 
a countable support forcing iteration 
$$
\langle \R_\alpha, \dot \s_\beta : \alpha \le \kappa, \ \beta < \kappa \rangle
$$
of proper forcings as follows. 
Let $\alpha < \kappa$, and assume that $\R_\beta$ and $\dot \s_\gamma$ are 
defined for all $\beta \le \alpha$ and $\gamma < \alpha$. 
If $\alpha$ is not inaccessible, then let $\dot \s_\alpha$ be an $\R_\alpha$-name 
for the collapse $\col(\omega_1,\omega_2)$. 
Then $\R_\alpha$ forces that 
$\dot \s_\alpha$ is $\omega_1$-closed, and hence proper. 
Let $\R_{\alpha+1} := \R_\alpha * \dot \s_\alpha$.

Now assume that $\alpha$ is inaccessible. 
Also, assume as a recursion hypothesis that 
$\R_\alpha$ is $\alpha$-c.c., 
has size $\alpha$, and collapses $\alpha$ to become $\omega_2$. 
Let $\dot \s_\alpha$ be an $\R_\alpha$-name for 
$\add(\omega,\omega_2) \times \add(\omega_2)$ (in other words, 
$\add(\omega,\alpha) \times \add(\alpha)$). 
Note that this product is forcing equivalent to the two-step forcing iteration 
$\add(\omega_2) * \add(\omega,\omega_2)$, which is an $\omega_1$-closed forcing 
followed by an $\omega_1$-c.c.\! forcing, and hence is proper. 
Let $\R_{\alpha+1} := \R_\alpha * \dot \s_\alpha$.

Now let $\delta \le \kappa$ be a limit ordinal, and 
assume that 
$\R_\beta$ and $\dot \s_\beta$ are defined for all $\beta < \delta$. 
Let $\R_\delta$ be the 
countable support limit of 
$\langle \R_\alpha : \alpha < \delta \rangle$. 
By standard arguments, it is easy to check that if $\delta$ is inaccessible, then 
the recursion hypothesis stated in the inaccessible case above holds for $\R_\delta$.

This completes the definition.  
The iteration $\R_\kappa$ is proper, $\kappa$-c.c.\!, and has size $\kappa$.  
So $\R_\kappa$ preserves $\omega_1$ and collapses $\kappa$ to become $\omega_2$. 
Standard nice name arguments show that $\R_\kappa$ 
forces that $2^\omega = 2^{\omega_1} = \omega_2$ and 
$2^\mu = \mu^+$ for all cardinals $\mu \ge \kappa$.

\bigskip

Let $G$ be a generic filter on $\R_\kappa$. 
In $V[G]$, we define a sequence of forcing posets  
$\langle \p_\beta : \beta \le \kappa^+ \rangle$. 
This sequence will be a suitable mixed support forcing 
iteration based on a sequence of names 
$\langle \dot S_\gamma : \gamma \in \kappa^+ \cap \odd \rangle$. 
Definition 1.1 provides a recursive 
description which will determine the iteration, provided that we 
specify the names $\dot S_\gamma$ for all $\gamma \in \kappa^+ \cap \odd$. 
Each name $\dot S_\gamma$ will be a nice $\p_\gamma$-name for a subset 
of $\omega_2 \cap \cof(\omega)$ such that 
$\p_\gamma$ forces that $\dot S_\gamma$ does not reflect to any ordinal 
in $\omega_2 \cap \cof(\omega_1)$.

We will assume two recursion hypotheses in $V[G]$. 
Let $\beta < \kappa^+$, and suppose that $\langle \p_\delta : \delta \le \beta \rangle$ and 
$\langle \dot S_\gamma : \gamma \in \beta \cap \odd \rangle$ are defined. 
The first recursion hypothesis is:

\begin{recursionhyp}
	For all $\xi \le \beta$, 
	the forcing poset $\p_{\xi}^*$ is $\omega_2$-distributive, and therefore $\p_\xi$ 
	preserves $\omega_1$ and $\omega_2$.
	\end{recursionhyp}

Let us see how we can prove the consistency result assuming that this first recursion hypothesis 
holds for all $\beta < \kappa^+$. 
By Lemma 1.19(2) and Proposition 2.2, 
$\p_{\kappa^+}$ is $\kappa^+$-c.c.\! 
and preserves 
$\omega_1$ and $\omega_2$. 
It easily follows that any nice $\p_{\kappa^+}$-name for a subset of $\kappa \cap \cof(\omega)$ which 
does not reflect to any ordinal of uncountable cofinality in $\kappa$ 
is also a nice $\p_\beta$-name for a set of the same kind for some $\beta < \kappa^+$. 
Since $\p_\beta$ has size $\kappa$ and $2^\kappa = \kappa^+$, 
after we define $\p_\beta$ we can enumerate all such $\p_\beta$-names 
in order type $\kappa^+$. 
When we select the names $\dot S_\gamma$, we use a standard bookkeeping function 
argument to arrange that any such name is equal to 
$\dot S_\gamma$ for some $\gamma < \kappa^+$. 
Since $\p_{\gamma+1}$ is a regular suborder of $\p_{\kappa^+}$ 
and is forcing equivalent to $\p_\gamma * \textrm{CU}(\kappa \setminus \dot S_\gamma)$ 
by Proposition 2.5, 
this nonreflecting set will become nonstationary after forcing with $\p_{\kappa^+}$. 
Thus, in the model $V^{\R_\kappa * \p_{\kappa^+}}$, 
every stationary subset of $\omega_2 \cap \cof(\omega)$ reflects to an 
ordinal in $\omega_2$ with cofinality $\omega_1$. 
Since $\p_{\kappa^+}$ adds $\kappa^+$ many reals, standard arguments show that 
in this final model, $2^\omega = \omega_3$.

In order to maintain the first recursion hypothesis, we will need a second more technical 
recursion hypothesis. 
Before stating it, we introduce some terminology.

\begin{notation}
A set $N$ in the ground model $V$ 
is said to be \emph{suitable} if 
$N$ is an elementary substructure of $H(\kappa^+)$ of size less than $\kappa$, 
$\kappa_N := N \cap \kappa$ is inaccessible, 
$|N| = \kappa_N$, 
$N^{< \kappa_N} \subseteq N$, and the forcing 
iteration $\vec \R := \langle \R_\alpha, \dot \s_\delta : \alpha \le \kappa, \ \delta < \kappa \rangle$ 
is a member of $N$.
	\end{notation}

The fact that $\kappa$ is Mahlo implies by standard 
arguments that there are stationarily many 
suitable sets in $P_{\kappa}(H(\kappa^+))$. 
The same comment applies regarding Notation 3.4 below.

\begin{lemma}
Suppose that $N$ is suitable. 
Let $\pi_N : N \to N_0$ be the transitive collapse of $N$. 
Let $\pi_{N[G]} : N[G] \to M_0$ be the transitive collapse of $N[G]$ in $V[G]$. 
Then:
\begin{enumerate}
	\item $\pi_N(\vec \R) = 
	\langle \R_\alpha, \dot \s_\delta : \alpha \le \kappa_N, \ \delta < \kappa_N \rangle$;
	\item $M_0 = N_0[G \restrict \kappa_N]$, and therefore $M_0 \in V[G \restrict \kappa_N]$;
	\item $\pi_{N[G]} \restrict N = \pi_N$.
\end{enumerate}
\end{lemma}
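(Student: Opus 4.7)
For (1), the plan is to exploit the absoluteness of the recursive definition of $\vec\R$ together with the closure of $N$. The hypotheses that $\kappa_N$ is inaccessible and $N^{<\kappa_N} \subseteq N$ force $V_{\kappa_N} \subseteq N$ by induction on rank, so every initial segment $\R_\alpha$ with $\alpha < \kappa_N$ lies in $V_{\kappa_N} \subseteq N$ and is pointwise fixed by $\pi_N$. The recursive clauses in the definition of $\vec\R$ (the trivial, inaccessible, non-inaccessible, and limit stages, together with the L\'evy collapse and the $\add(\omega,\omega_2) \times \add(\omega_2)$ names and the countable support limits) are absolute. Combined with the elementarity of $N$ and $\pi_N(\kappa) = \kappa_N$, this forces $\pi_N(\vec\R)$ to be the iteration of length $\kappa_N$ produced by the same recipe, which is exactly the statement of (1).

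For (2) and (3), the plan is to construct the Mostowski collapse of $N[G]$ explicitly as a name-interpretation map and read off both statements at once. Start by observing that $\R_\kappa \cap N = \R_{\kappa_N}$: any $p \in \R_\kappa \cap N$ is a countable function in $N$ whose domain, being a countable subset of $\kappa$ in $N$, lies in $N$ (using $N^\omega \subseteq N$) and hence in $N \cap \kappa = \kappa_N$; conversely $\R_{\kappa_N} \subseteq V_{\kappa_N} \subseteq N$. Consequently $\pi_N$ sends each $\R_\kappa$-name in $N$ to an $\R_{\kappa_N}$-name in $N_0$. Define a map $\sigma : N[G] \to N_0[G \restrict \kappa_N]$ by $\sigma(\dot y^G) := \pi_N(\dot y)^{G \restrict \kappa_N}$. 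To verify well-definedness and $\in$-preservation, suppose $\dot y_1^G = \dot y_2^G$, or $\dot y_1^G \in \dot y_2^G$, with $\dot y_1, \dot y_2 \in N$; by elementarity and genericity there is a condition $p \in G \cap N = G \cap \R_{\kappa_N}$ forcing the equality or membership in $\R_\kappa$, and applying $\pi_N$ inside $N$ transfers this to the corresponding forcing statement in $N_0$ about $\pi_N(\dot y_1)$ and $\pi_N(\dot y_2)$. Interpretation by $G \restrict \kappa_N$ then yields the corresponding relation in $N_0[G \restrict \kappa_N]$. Surjectivity of $\sigma$ follows from the surjectivity of $\pi_N$ on names.

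Now $N[G] \prec H(\kappa^+)^{V[G]}$ by the standard combination of the forcing theorem with elementarity of $N$, so $N[G]$ is extensional, and $N_0[G \restrict \kappa_N]$ is transitive. Uniqueness of the Mostowski collapse then forces $\sigma = \pi_{N[G]}$ and $M_0 = N_0[G \restrict \kappa_N]$; since $N_0 \in V$ and $G \restrict \kappa_N$ is $V$-generic on $\R_{\kappa_N}$, this also gives $M_0 \in V[G \restrict \kappa_N]$, proving (2). For (3), given $x \in N$, represent $x$ in $N[G]$ by the check name $\check x$ and compute $\pi_{N[G]}(x) = \sigma(\check x^G) = \pi_N(\check x)^{G \restrict \kappa_N} = \pi_N(x)$, the last step by absoluteness of the check-name construction under $\pi_N$.

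The principal technical obstacle is the verification that $\sigma$ preserves and reflects both $=$ and $\in$; its crux is the identification $\R_\kappa \cap N = \R_{\kappa_N}$ together with the fact that any $\R_\kappa$-forcing statement about names in $N$ is decided by a condition in $G \cap N$. Both rest on elementarity, the closure of $N$, and the countable support of the preparation iteration, but they must be laid out with care since $\R_\kappa$ itself is not an element of $V_{\kappa_N}$.
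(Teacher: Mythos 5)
Your proof is correct, and it is the standard argument the authors have in mind: the paper simply states that the proof of this lemma is straightforward and omits it. Your two key identifications --- $V_{\kappa_N} \subseteq N$ (hence $\pi_N$ fixes $\R_{\kappa_N}$ pointwise and part (1) follows from absoluteness of the recursion) and $\R_\kappa \cap N = \R_{\kappa_N}$ (hence the name-interpretation map $\dot y^G \mapsto \pi_N(\dot y)^{G\restrict\kappa_N}$ is a well-defined $\in$-isomorphism onto the transitive set $N_0[G\restrict\kappa_N]$, giving (2) and (3) by uniqueness of the Mostowski collapse) --- are exactly the right ones.
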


The proof is straightforward.

\begin{notation}
A set $N$ is said to be \emph{$\beta$-suitable} if $N$ is suitable and $N$ contains 
$\R_\kappa$-names for the objects 
$\langle \p_i : i \le \beta \rangle$ and 
$\langle \dot S_\gamma : \gamma \in \beta \cap \odd \rangle$.
\end{notation}

Observe that if $N$ is $\beta$-suitable, 
then for all $\beta' \in N \cap \beta$, 
$N$ is $\beta'$-suitable.

\begin{lemma}
	Let $N$ be $\beta$-suitable, $\pi_N : N \to N_0$ the transitive collapse of $N$, 
	and $\pi : N[G] \to N_0[G \restrict \kappa_N]$ the transitive collapse of $N[G]$. 
	Then in $V[G \restrict \kappa_N]$, 
	$\langle \p_i^\pi : i \le \pi(\beta) \rangle := \pi(\langle \p_i : i \le \beta \rangle)$ 
	is a suitable mixed support forcing iteration 
	based on the sequence of names 
	$\langle \dot S_\gamma^\pi : \gamma \in \pi(\beta) \cap \odd \rangle := 
	\pi(\langle \dot S_\gamma : \gamma \in \beta \cap \odd \rangle)$. 
	Moreover, $\pi(\p_\beta^c) = (\p_{\pi(\beta)}^\pi)^c$, 
	$\pi(\p_\beta^*) = (\p_{\pi(\beta)}^\pi)^*$, 
	$\pi(\p_\beta^c \otimes \p_\beta^*) = (\p_{\pi(\beta)}^\pi)^c \otimes (\p_{\pi(\beta)}^\pi)^*$ , 
	and $\pi(\p_\beta^{*,s}) = (\p_{\pi(\beta)}^\pi)^{*,s}$.
	\end{lemma}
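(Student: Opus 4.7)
The plan is to combine elementarity of the Mostowski collapse with the absoluteness result Lemma 1.2. As a first step, I would establish that $N[G] \prec H(\kappa^+)^{V[G]}$. This follows from the standard fact that if $N \prec H(\theta)^V$ contains a forcing poset and $G$ is $V$-generic for it, then $N[G] \prec H(\theta)^{V[G]}$; here $\theta = \kappa^+$ and the poset is $\R_\kappa \in N$. Since $V[G]$ models by construction that $\langle \p_i : i \le \beta \rangle$ is a suitable mixed support forcing iteration based on $\langle \dot S_\gamma : \gamma \in \beta \cap \odd \rangle$, so does $N[G]$.

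Next, by Lemma 3.3, the collapse $\pi := \pi_{N[G]}$ extends $\pi_N$ and sends $N[G]$ isomorphically onto $M := N_0[G \restrict \kappa_N]$, which lies in $V[G \restrict \kappa_N]$. Being an isomorphism, $\pi$ is elementary, and so $M$ models that $\langle \p_i^\pi : i \le \pi(\beta) \rangle$ is a suitable mixed support forcing iteration based on $\langle \dot S_\gamma^\pi : \gamma \in \pi(\beta) \cap \odd \rangle$.

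To lift this from $M$ to $V[G \restrict \kappa_N]$, I would apply Lemma 1.2 in the ambient universe $V[G \restrict \kappa_N]$ with this $M$. The hypotheses that $M$ is a transitive model of $\textsf{ZFC}$ minus Powerset and contains $\omega_2$ (with $\omega_2^M = \kappa_N$) are immediate. I expect the main obstacle to be verifying $M^{\omega_1} \subseteq M$ in $V[G \restrict \kappa_N]$; this should follow from the ground-model closure $N^{<\kappa_N} \subseteq N$ together with properties of $\R_{\kappa_N}$, using that any $\omega_1$-sequence of elements of $M$ in $V[G \restrict \kappa_N]$ can be realized by evaluating names drawn from $N_0$.

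For the moreover part, each of the operations $\p \mapsto \p^c$, $\p \mapsto \p^*$, $\p \mapsto \p^c \otimes \p^*$, and $\p \mapsto \p^{*,s}$ is defined from $\p$ by a formula absolute between transitive models containing the relevant parameters. Hence the elementary isomorphism $\pi$ commutes with each of them, yielding the four claimed equalities as an immediate consequence.
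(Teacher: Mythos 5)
Your proposal is correct and follows essentially the same route as the paper: transfer the statement to $M = N_0[G\restrict\kappa_N]$ via the isomorphism $\pi$, then lift it to $V[G\restrict\kappa_N]$ by Lemma 1.2, with the closure $M^{\omega_1}\subseteq M$ verified exactly as you indicate, from $N^{<\kappa_N}\subseteq N$ (hence $N_0^{<\kappa_N}\subseteq N_0$) together with the $\kappa_N$-c.c.\ of $\R_{\kappa_N}$. The paper treats the ``moreover'' clause with the same brevity you do, as an absoluteness observation.
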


\begin{proof}
	Let $M := N_0[G \restrict \kappa_N]$. 
	Then $\kappa_N = \pi(\kappa) \in M$ and 
	$\kappa_N$ equals $\omega_2$ in $V[G \restrict \kappa_N]$. 
	Since $N^{<\kappa_N} \subseteq N$ 
	and $N$ and $N_0$ 
	are isomorphic, $N_0^{<\kappa_N} \subseteq N_0$. 
	As $\R_{\kappa_N}$ is $\kappa_N$-c.c.\!, it follows 
	by standard facts that $M = N_0[G \restrict \kappa_N]$ is 
	closed under sequences of length less than 
	$\kappa_N$ in $V[G \restrict \kappa_N]$. 
	In particular, $M^{\omega_1} \subseteq M$ in 
	$V[G \restrict \kappa_N]$. 
	Since $M$ is isomorphic to $N[G]$, which is a model 
	of $\textsf{ZFC} - \textsf{Powerset}$, 
	$M$ is a model of $\textsf{ZFC} - \textsf{Powerset}$.

	Using absoluteness, 
	$\pi(\langle \p_i : i \le \beta \rangle)$ is a sequence 
	of forcing posets $\langle \p_i^\pi : i \le \pi(\beta) \rangle$, 
	and $\pi(\langle \dot S_\gamma : \gamma \in \beta \cap \odd \rangle)$ 
	is a sequence $\langle \dot S_\gamma^\pi : \gamma \in \pi(\beta) \cap \odd \rangle$ 
	such that for each $\gamma \in \pi(\beta) \cap \odd$, $\dot S_\gamma^\pi$ is a nice $\p_\gamma^\pi$-name 
	for a subset of $\kappa_N \cap \cof(\omega)$.
	
	Since $\pi$ is an isomorphism, $M$ models that $\langle \p_i^\pi : i \le \pi(\beta) \rangle$ 
	is a suitable mixed support forcing iteration based 
	on the sequence of names $\langle \dot S_\gamma^\pi : \gamma \in \pi(\beta) \cap \odd \rangle$. 
	By Lemma 1.2, it follows that in $V[G \restrict \kappa_N]$, 
	$\langle \p_i^\pi : i \le \pi(\beta) \rangle$ is a suitable mixed support forcing iteration based 
	on the sequence of names 
	$\langle \dot S_\gamma^\pi : \gamma \in \pi(\beta) \cap \odd \rangle$. 
	The remaining statements are easy to verify.
	\end{proof}

We are now ready to state the second recursion hypothesis.

\begin{recursionhyp}
	Let $N$ be $\beta$-suitable and $\pi$ be the 
	transitive collapsing map of $N[G]$. 
	Then for all odd $\gamma \in N \cap \beta$, in the model 
	$V[G \restrict \kappa_N]$, 
	$\pi(\p_\gamma^c \otimes \p_\gamma^*)$ 
	forces that $\pi(\dot S_\gamma)$ is a nonstationary subset of $\kappa_N$. 
	\end{recursionhyp}

It remains to prove that the two recursion hypotheses hold for all $\beta < \kappa^+$. 
The proof will proceed as follows. 
For a fixed $\beta < \kappa^+$, we will 
assume that the recursion hypotheses hold 
for all $\gamma \le \beta$, 
and then prove that they hold for $\beta+1$ by first verifying 
the second recursion hypothesis for $\beta+1$, 
and then using that hypothesis to 
prove the first recursion hypothesis for $\beta+1$. 
Then, for a fixed limit ordinal $\alpha < \kappa^+$, we will assume that both recursion 
hypotheses hold for all $\beta < \alpha$. 
Observe that the second recursion hypothesis then holds immediately for $\alpha$. 
So in the limit case it will suffice to prove the first recursion hypothesis for $\alpha$.

The proof of the first recursion hypothesis 
is the same for both successor and limit stages. 
Observe that if the second recursion hypothesis holds for $\beta$, where 
$\beta$ is even, then it immediately holds for $\beta+1$. 
Putting it all together, it will suffice to prove 
the second recursion hypothesis only in the successor case 
$\beta+1$ where $\beta$ is odd, and then 
prove the first recursion hypothesis in an independent way.

The proofs of both recursion hypotheses will use the following lemma.

\begin{lemma}
	Assume that both recursion hypotheses hold 
	for all $\gamma < \beta$ and 
	the second recursion hypothesis holds for $\beta$. 
	Let $N$ be $\beta$-suitable and 
	$(a,p) \in \p_\beta^c \otimes \p_\beta^*$. 
	Let $\pi$ be the transitive collapsing map of $N[G]$. 
	Then in $V[G]$ there exists a 
	$V[G \restrict \kappa_N]$-generic filter $K$ on 
	$\pi(\p_\beta^c \otimes \p_\beta^*)$ which contains 
	$\pi(a,p)$ such that $V[G]$ is a generic extension 
	of $V[G \restrict \kappa_N][K]$ by a proper forcing poset.
	
	Furthermore, letting $J := \pi(\tau_\beta)[K]$, 
	$K^+ := \pi^{-1}(K)$, and $J^+ := \pi^{-1}(J)$, 
	then $K^+$ is a filter on 
	$N[G] \cap (\p_\beta^c \otimes \p_\beta^*)$ containing 
	$(a,p)$ which is $N[G]$-generic, $J^+$ is a filter on 
	$N[G] \cap \p_\beta$ which is $N[G]$-generic, and 
	$J^+ = \tau_\beta[K^+]$. 
	Moreover, there exists $s \in \p_\beta$ such that 
	for all $(b,q)$ in $K^+$, $s \le_\beta^* q$.
	\end{lemma}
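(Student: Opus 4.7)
The plan is to pass to the collapsed iteration inside $V[G \restrict \kappa_N]$, invoke Lemma 2.10 there to manufacture $K$, use the properness of the tail of $\R_\kappa$ to descend to $V[G]$, and extract the master condition $s$ by combining Proposition 2.9 with Lemma 2.11.

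First I would assume without loss of generality that $a \le_\beta^c p \restrict \even$, which can be arranged by replacing $a$ with a common $\le_\beta^c$-refinement of $a$ and $p \restrict \even$ in $\p_\beta^c$; any generic filter containing the stronger condition still contains $\pi(a,p)$. By Lemma 3.5, $\pi(\langle \p_i : i \le \beta \rangle)$ is, in $V[G \restrict \kappa_N]$, a suitable mixed support forcing iteration based on the collapsed sequence of names, and the second recursion hypothesis for $\beta$ provides that $\pi(\p_\gamma^c \otimes \p_\gamma^*)$ forces $\pi(\dot S_\gamma)$ to be nonstationary for each odd $\gamma \in N \cap \beta$. Proposition 2.9 applied inside $V[G \restrict \kappa_N]$ therefore yields an $\omega_2$-closed dense subset $D^\pi$ of $\pi(\p_\beta^{*,s})$, where $\omega_2 = \kappa_N$ in that model. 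Since $2^{\omega_1} = \omega_2 = \kappa_N$ holds in $V[G \restrict \kappa_N]$ and the stage-$\kappa_N$ forcing $\dot \s_{\kappa_N}$ is precisely $\add(\omega,\omega_2) \times \add(\omega_2)$, Lemma 2.10 applies and produces a $V[G \restrict \kappa_N]$-generic filter $K$ on $\pi(\p_\beta^c \otimes \p_\beta^*)$ containing $\pi(a,p)$, with $V[G \restrict (\kappa_N+1)]$ an $\omega_1$-c.c.\ extension of $V[G \restrict \kappa_N][K]$. The tail $\R_\kappa / \R_{\kappa_N+1}$ is a countable support iteration of proper forcings and hence proper, so $V[G]$ is a proper extension of $V[G \restrict \kappa_N][K]$.

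The assertions about $K^+$ and $J^+$ are routine transfers along the isomorphism $\pi$: since $\pi$ carries $N[G]$ onto $N_0[G \restrict \kappa_N] \subseteq V[G \restrict \kappa_N]$ and sends $\tau_\beta$ to $\pi(\tau_\beta)$, the $V[G \restrict \kappa_N]$-genericity of $K$ and $J$ upgrades to $N[G]$-genericity of $K^+$ and $J^+$, and $J^+ = \tau_\beta[K^+]$ follows from the naturality of $\pi^{-1}$.

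The main obstacle is producing the master condition $s$. Write $K_2$ for the second-coordinate projection of $K$ onto $\pi(\p_\beta^*)$ and $K^+_2 := \pi^{-1}(K_2) = \{q : \exists b, (b,q) \in K^+\}$; filter-theoretic compatibility forces each $q \in K^+_2$ to satisfy $q \restrict \even = p \restrict \even$. The strategy is to construct, inside $V[G \restrict \kappa_N][K_2]$, a $\le_\beta^*$-descending chain $\langle r_\alpha : \alpha < \kappa_N \rangle$ of elements of $K^+_2$ cofinal in $K^+_2$. Fix in $V[G \restrict \kappa_N][K_2]$ an enumeration of $K^+_2$ of order type $\kappa_N = \omega_2^{V[G \restrict \kappa_N][K_2]}$; successor stages of the recursion use only directedness of $K^+_2$, while at a limit $\delta < \kappa_N$ the image sequence $\langle \pi(r_\beta) : \beta < \delta \rangle$ is a $<\kappa_N$-sequence of elements of $V[G \restrict \kappa_N]$ lying in $V[G \restrict \kappa_N][K_2]$, so by the $\omega_2$-distributivity of $\pi(\p_\beta^*)$ it already belongs to $V[G \restrict \kappa_N]$, whereupon the $\omega_2$-closure of $D^\pi$, its density in $\pi(\p_\beta^*)$, and the genericity of $K_2$ together deliver a common $\le^*$-lower bound $e \in K_2$, and I set $r_\delta := \pi^{-1}(e) \in K^+_2$. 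Finally, in $V[G]$ the ordinal $\kappa_N$ has cofinality $\omega_1$, since it was collapsed to have size $\omega_1$ by stage $\kappa_N + 2$, so I extract a $\le_\beta^*$-descending subsequence $\langle r_{\alpha_i} : i < \omega_1 \rangle$ cofinal in the original chain. Since the first recursion hypothesis for $\gamma < \beta$ supplies the $\omega_1$-preservation of each $\p_\gamma$, Lemma 2.11 applied with $\delta = \omega_1$ produces $s \in \p_\beta$ with $s \le_\beta^* r_{\alpha_i}$ for every $i < \omega_1$; by cofinality this yields $s \le_\beta^* q$ for every $q \in K^+_2$.
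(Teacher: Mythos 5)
Your proposal follows the paper's proof essentially step for step: the same reduction to $a \le_\beta^c p \restrict \even$, the same appeal to Lemma 3.5, Proposition 2.9 and Lemma 2.10 at stage $\kappa_N$ of the preparation iteration, the same properness-of-the-tail argument for descending to $V[G]$, and the same extraction of the master condition $s$ via a $\le_\beta^*$-descending chain dense in $K_2$ followed by Lemma 2.11 (the paper applies Lemma 2.11 directly with $\delta = \kappa_N \in \omega_2 \cap \cof(\omega_1)$ rather than first passing to an $\omega_1$-cofinal subsequence, but this is immaterial). The one place you add detail beyond the paper's ``by standard arguments'' is the recursive construction of the chain, and there your limit-stage assertion that closure of $D^\pi$ plus genericity of $K_2$ delivers a lower bound \emph{inside} $K_2$ deserves a word of justification (the set of conditions deciding the chain is not obviously dense); the cleanest route is to note that Lemma 2.10 realizes $K_2$ as generated by an $\add(\kappa_N)$-generic on $(D^\pi,\le^{*,s})$, whose filter is literally the upward closure of a $\le^*$-descending $\kappa_N$-chain, and also to enumerate $K_2$ rather than $K_2^+$ inside $V[G \restrict \kappa_N][K_2]$, since $\pi^{-1}$ is only available in $V[G]$.
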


\begin{proof}
	By extending further if necessary, we may assume without loss of 
	generality that $a \le_\beta^c p \restrict \even$. 
	Let $\pi(\langle \p_i : i \le \beta \rangle) = 
	\langle \p_i^\pi : i \le \pi(\beta) \rangle$ and 
	$\pi(\langle \dot S_\gamma : \gamma \in \beta \cap \odd \rangle) = 
	\langle \dot S_\gamma^\pi : \gamma \in \pi(\beta) \cap \odd \rangle$. 
	Then the second recursion hypothesis means that in 
	$V[G \restrict \kappa_N]$, for all $\gamma \in \pi(\beta) \cap \odd$, 
	$(\p_\gamma^\pi)^c \otimes (\p_\gamma^\pi)^*$ 
	forces that $\dot S_\gamma^\pi$ is nonstationary 
	in $\kappa_N$. 
	By Proposition 2.9, in $V[G \restrict \kappa_N]$ 
	the forcing poset 
	$\pi(\p_\beta^{*,s})$ 
	contains a $\kappa_N$-closed dense subset.
	
	At stage $\kappa_N$ in the preparation forcing iteration $\R_\kappa$ we forced with 
	$\add(\omega,\kappa_N) \times \add(\kappa_N)$. 
	Therefore, $V[G \restrict (\kappa_N+1)] = V[G \restrict \kappa_N][L]$, where $L$ is a 
	$V[G \restrict \kappa_N]$-generic filter on $\add(\omega,\kappa_N) \times \add(\kappa_N)$. 
	By Lemma 2.10, there exists in 
	$V[G \restrict \kappa_N][L]$ a 
	$V[G \restrict \kappa_N]$-generic filter 
	$K$ on $\pi(\p_\beta^c \otimes \p_\beta^*)$ 
	which contains $\pi(a,p)$ such that 
	$V[G \restrict \kappa_N][L]$ is a generic extension of 
	$V[G \restrict \kappa_N][K]$ by an $\omega_1$-c.c.\! forcing poset. 
	Since $V[G]$ is a generic extension of $V[G \restrict \kappa_N][L]$ by a proper forcing, 
	namely, the tail of the iteration $\R_\kappa$ after forcing with $\R_{\kappa_N + 1}$, it follows 
	that $V[G]$ is a generic extension of $V[G \restrict \kappa_N][K]$ by a proper forcing.
	
	Recall that the map $\tau_\beta : \p_\beta^c \otimes \p_\beta^* \to \p_\beta$ 
	defined by $\tau_\beta(b,q) = q + b$ is a surjective projection mapping by Lemma 1.18. 
	Since $\pi$ is an isomorphism and by absoluteness, 
	in $V[G \restrict \kappa_N]$ we have that 
	$\pi(\tau_\beta)$ is a surjective 
	projection mapping from 
	$\pi(\p_\beta^c \otimes \p_\beta^*)$ 
	onto $\pi(\p_\beta)$. 
	Let $J := \pi(\tau_\beta)[K]$. 
	Then $J$ is a $V[G \restrict \kappa_N]$-generic filter on $\pi(\p_\beta)$.
	
	Let $K^+ := \pi^{-1}(K)$ and $J^+ := \pi^{-1}(J)$. 
	Since $\pi(a,p) \in K$, $(a,p) \in K^+$. 
	It is easy to check that $K^+$ and $J^+$ are filters on 
	$N[G] \cap (\p_\beta^c \otimes \p_\beta^*)$ and 
	$N[G] \cap \p_\beta$ respectively, and 
	$J^+ = \tau_\beta[K^+]$. 
	If $D$ is a dense open subset of $\p_\beta^c \otimes \p_\beta^*$ 
	in $N[G]$, then since $\pi$ is an isomorphism and by absoluteness, 
	$\pi(D)$ is a dense open subset of 
	$\pi(\p_\beta^c \otimes \p_\beta^*)$ in $V[G \restrict \kappa_N]$. 
	Since $K$ is $V[G \restrict \kappa_N]$-generic, we can fix 
	$w \in \pi(D) \cap K$. 
	Then $\pi^{-1}(w) \in D \cap K^+$. 
	This shows that $K^+$ is $N[G]$-generic for 
	$\p_\beta^c \otimes \p_\beta^*$. 
	A similar argument shows that $J^+$ is $N[G]$-generic for $\p_\beta$.
	
	By Lemma 1.16, we can write 
	$V[G \restrict \kappa_N][K] = V[G \restrict \kappa_N][K_1 \times K_2]$, where 
	$K_1 \times K_2 := K \cap 
	(\pi(\p_\beta^c \otimes \p_\beta^*) / \pi(a,p))$ 
	is a $V[G \restrict \kappa_N]$-generic 
	filter on $(\pi(\p_\beta)^c / \pi(a)) \times  (\pi(\p_\beta)^* / \pi(p))$. 
	By Proposition 2.9, 
	$\pi(\p_\beta)^*$ contains a $\kappa_N$-closed 
	dense subset. 
	By standard arguments, it follows that there exists 
	in $V[G \restrict \kappa_N][K]$ 
	a $\pi(\le_\beta^*)$-descending sequence 
	$\langle q_i : i < \kappa_N \rangle$ below $\pi(p)$ 
	which is dense in $K_2$. 
	Let $r_i := \pi^{-1}(q_i)$ for all $i < \kappa_N$. 
	Then $\langle r_i : i < \kappa_N \rangle$ is a 
	$\le_\beta^*$-descending sequence of conditions in 
	$N[G] \cap \p_\beta^*$ below $p$ which is dense in 
	$\pi^{-1}(K_2)$.
	
	Now $\kappa_N$ has cofinality $\omega_1$ in $V[G]$, 
	and since both recursion hypotheses hold for all 
	$\gamma < \beta$, we also have that 
	for all $\gamma < \beta$, $\p_\gamma$ preserves 
	$\omega_1$. 
	By Lemma 2.11, there is $s \in \p_\beta$ such that 
	$s \le_\beta^* r_i$ for all $i < \kappa_N$. 
	Then $s \le_\beta^* r$ for all $r \in \pi^{-1}(K_2)$. 
	Consider $(b,q)$ in $K^+$. 
	Since $(a,p) \in K^+$, without loss of generality 
	$(b,q) \le (a,p)$. 
	Then $\pi(b,q) \in K$, so $\pi(q) \in K_2$. 
	Hence, $q \in \pi^{-1}(K_2)$. 
	Therefore, $s \le_\beta^* q$, which completes 
	the proof.
\end{proof}

The next proposition verifies the second recursion hypothesis. 
We will use the standard result that proper forcings preserve the stationarity 
of stationary subsets of $\alpha \cap \cof(\omega)$, for any ordinal 
$\alpha$ with uncountable cofinality. 
This result is true because any set 
$S \subseteq \alpha \cap \cof(\omega)$ is stationary in 
$\alpha$ iff the set $\{ a \in [\alpha]^\omega : \sup(a) \in S \}$ is stationary 
in $[\alpha]^\omega$, and proper forcings preserve the stationarity of subsets of 
$[\alpha]^\omega$.

\begin{proposition}
	Let $\beta < \omega_3$ be odd, and assume that the two recursion hypotheses hold for 
	all $\gamma \le \beta$. 
	Let $N$ be $(\beta+1)$-suitable and $\pi$ be the transitive collapsing map of $N[G]$. 
	Then for all odd $\gamma \in N \cap (\beta+1)$, in the model $V[G \restrict \kappa_N]$, 
	$\pi(\p_\gamma^c \otimes \p_\gamma^*)$ 
	forces that $\pi(\dot S_\gamma)$ is a nonstationary subset of $\kappa_N$. 
\end{proposition}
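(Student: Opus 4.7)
The cases with $\gamma \in N \cap \beta$ odd should reduce directly to the already assumed second recursion hypothesis at smaller stages: by elementarity, $(\beta+1)$-suitability of $N$ together with $\gamma \in N$ gives that $N$ is $\gamma$-suitable, and the hypothesis applies. The substantive task is therefore the new case $\gamma = \beta$ (assuming $\beta \in N$, which is the relevant situation). I would argue by contradiction: suppose some $\pi(a,p) \in \pi(\p_\beta^c \otimes \p_\beta^*)$ forces $\pi(\dot S_\beta)$ to be stationary in $\kappa_N$ over $V[G \restrict \kappa_N]$, where $(a,p) \in N[G] \cap (\p_\beta^c \otimes \p_\beta^*)$ with $a \le_\beta^c p \restrict \even$.

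The plan is to apply Lemma 3.8 (available because the second recursion hypothesis holds at $\beta$) to produce, in $V[G]$, a $V[G \restrict \kappa_N]$-generic filter $K$ on $\pi(\p_\beta^c \otimes \p_\beta^*)$ containing $\pi(a,p)$, with $V[G]$ a proper extension of $V[G \restrict \kappa_N][K]$, and a condition $s \in \p_\beta$ with $s \le_\beta^* q$ for every $(b,q) \in K^+ := \pi^{-1}[K]$. Setting $J := \pi(\tau_\beta)[K]$, $J^+ := \tau_\beta[K^+]$, and $T := \pi(\dot S_\beta)^J$, the assumption gives $T$ stationary in $\kappa_N$ in $V[G \restrict \kappa_N][K]$. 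Because $T \subseteq \kappa_N \cap \cof(\omega)$ and properness preserves the stationarity of $\omega$-cofinal subsets, $T$ remains stationary in $V[G]$. Using that $\pi$ is an isomorphism on $N[G]$, that $\dot S_\beta$ is a nice name in $N[G]$, and that $J^+$ is $N[G]$-generic (part of Lemma 3.8), I would verify the identification $T = \dot S_\beta^{J^+}$ inside $V[G]$; and for each $\alpha \in T$, fixing a witnessing $(b,q) \in K^+$ with $q + b \Vdash_\beta \alpha \in \dot S_\beta$, the relation $s \le_\beta^* q$ combined with Lemma 1.11 yields $s + b \le_\beta q + b$, so $s + b \Vdash_\beta \alpha \in \dot S_\beta$.

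To obtain the contradiction, I would extend to a $\p_\beta$-generic $G_\beta$ over $V[G]$ containing $s$ whose Cohen part absorbs the $\p_\beta^c$-projection of $K^+$, so that $T \subseteq \dot S_\beta^{G_\beta} \cap \kappa_N$ holds in $V[G][G_\beta]$. Since $\dot S_\beta^{G_\beta}$ is nonreflecting in $\omega_2 \cap \cof(\omega)$ and $\kappa_N$ retains cofinality $\omega_1$ in $V[G][G_\beta]$ (the $\p_\beta^*$-factor is $\omega_2$-distributive by the first recursion hypothesis, and the Cohen factor $\p_\beta^c$ adds no cofinal $\omega$-sequence to any ordinal above $\omega_1$), this forces $T$ to be nonstationary in $\kappa_N$ in $V[G][G_\beta]$. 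Yet the preservation of stationarity in the other direction — that $T$ remains stationary when passing from $V[G]$ to $V[G][G_\beta]$ — would follow by viewing $\p_\beta$ as the projection of $\p_\beta^c \otimes \p_\beta^*$: below a dense set the latter is the product of a c.c.c.\ Cohen forcing (which is proper, hence preserves stationarity of $\omega$-cofinal subsets) and an $\omega_2$-distributive forcing (which adds no new $\omega_1$-sequences, hence no new clubs in $\kappa_N$), and stationarity in the larger product extension transfers down to the $\p_\beta$-extension. The clash between $T$ stationary and $T$ nonstationary in $V[G][G_\beta]$ gives the contradiction.

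The main obstacle is precisely the last preservation step: verifying that $\p_\beta$-forcing preserves the stationarity of $T$ as a subset of $\kappa_N \cap \cof(\omega)$, where $\kappa_N$ is an ordinal of cofinality $\omega_1$ that is neither $\omega_1$ nor $\omega_2$ of $V[G]$. This requires a careful analysis of the product decomposition and of the interaction between $s$, the pullback $K^+$, and the Cohen generic chosen for $G_\beta$, and appears to be the technical heart of the argument.
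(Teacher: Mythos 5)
Your overall skeleton matches the paper's: reducing to the case $\gamma = \beta$, invoking Lemma 3.7 to obtain $K$, $K^+$, $J$, $J^+$ and the condition $s$, identifying $T = \pi(\dot S_\beta)^J$ with the interpretation of $\dot S_\beta$ by $J^+$, exploiting the nonreflection of $\dot S_\beta$ at $\kappa_N$, and transferring stationarity between $V[G \restrict \kappa_N][K]$ and $V[G]$ via properness of $\omega$-cofinal sets. But there is a genuine gap in the step you pass over most quickly: there is no $\p_\beta$-generic filter $G_\beta$ over $V[G]$ whose Cohen part ``absorbs the $\p_\beta^c$-projection of $K^+$.'' The Cohen parts of the conditions in $K^+$ generate an $N[G]$-generic filter on $N[G] \cap \p_\beta^c$ whose union is a \emph{total} function $F$ on the infinitely many coordinates in $N \cap \beta \cap \even$, and $F$ is an element of $V[G]$ (since $K \in V[G]$ by Lemma 3.7). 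The restriction of any $V[G]$-generic filter on $\p_\beta^c$ to that block of coordinates is itself $V[G]$-generic for the corresponding Cohen factor, hence cannot lie in $V[G]$; so no $G_\beta$ can contain all the conditions $s + b_\alpha$ simultaneously, and the inclusion $T \subseteq \dot S_\beta^{G_\beta} \cap \kappa_N$ cannot be arranged. (By contrast, the preservation step you single out as the main obstacle is actually unproblematic: it is essentially the argument of Proposition 2.1, since below a dense set $\p_\beta^c \otimes \p_\beta^*$ is a product of a c.c.c.\ forcing and an $\omega_2$-distributive forcing, both of which preserve stationarity of subsets of $\kappa_N \cap \cof(\omega)$.)

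The paper avoids passing to a full generic filter altogether. It fixes a $\p_\beta$-name $\dot c$ for a club in $\kappa_N$ of order type $\omega_1$ disjoint from $\dot S_\beta \cap \kappa_N$ (available because $\dot S_\beta$ is forced not to reflect to $\kappa_N$ and $\p_\beta$ preserves $\omega_1$), applies Proposition 2.4 to find $t \le_\beta^* s$ reducing $\dot c$ to a $\p_\beta^c$-name $\dot c_0$, and then uses the $\omega_1$-c.c.\ of $\p_\beta^c$ together with $\cf(\kappa_N) = \omega_1$ to extract an actual club $d \in V[G]$ with $t \Vdash_\beta d \cap \dot S_\beta = \emptyset$. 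The contradiction is then obtained condition by condition, with no generic filter over $V[G]$ needed: if $\alpha \in d \cap S$, the witness $u = z + b$ with $(b,z) \in K^+$ forcing $\alpha \in \dot S_\beta$ satisfies $t + b \le_\beta u$ (since $t \le_\beta^* s \le_\beta^* z$ and Lemma 1.10(3)), so the single condition $t + b$ forces both $\alpha \in \dot S_\beta$ and $\alpha \notin \dot S_\beta$. This yields $d \cap S = \emptyset$, hence $S$ is nonstationary in $V[G]$, and properness then pushes nonstationarity down to $V[G \restrict \kappa_N][K]$, after which a density argument in $K$ finishes. That pointwise use of $t + b$ in place of a global generic is the idea your argument is missing, and I would rework the second half of your proof along those lines.
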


\begin{proof}
	Since $N$ is $(\beta+1)$-suitable, $\beta \in N$ by elementarity, so $N$ is also $\beta$-suitable. 
	By the second recursion hypothesis holding at $\beta$, the conclusion of the proposition 
	is true for all odd $\gamma \in N \cap \beta$. 
	So it suffices to show that in $V[G \restrict \kappa_N]$, 
	$\pi(\p_\beta^c \otimes \p_\beta^*)$ 
	forces that $\pi(\dot S_\beta)$ is a nonstationary 
	subset of $\kappa_N$.

	Let $(a_0,p_0) \in \pi(\p_\beta^c \otimes \p_\beta^*)$, 
	and we will 
	find $(a,p) \le (a_0,p_0)$ 
	which forces that $\pi(\dot S_\beta)$ is nonstationary in $\kappa_N$. 
	By extending further if necessary, we may assume without loss of 
	generality that $a_0 \le p_0 \restrict \even$ in $\pi(\p_\beta)^c$. 
	Then by Lemma 1.15, 
	$\pi(\p_\beta^c \otimes \p_\beta^*) / (a_0,p_0)$ 
	is equal to 
	the product forcing $(\pi(\p_\beta)^c / a_0) \times 
	(\pi(\p_\beta)^* / p_0)$.

	Let $K$, $J$, $K^+$, $J^+$, and $s$ be as described 
	in Lemma 3.7, where $(a_0,p_0) \in K$. 
	Use $J^+$ to interpret the name $\dot S_\beta$ by 
	letting $S$ be the set of $\alpha < \kappa_N$ 
	such that for some $u \in J^+$, 
	$u \Vdash_\beta \check \alpha \in \dot S_\beta$. 
	We claim that $S = \pi(\dot S_\beta)^J$. 
	Clearly $\pi(\dot S_\beta)^J$ is a subset of $\kappa_N$, 
	since $\pi(\dot S_\beta)$ is a 
	$\pi(\p_\beta)$-name for a subset of $\kappa_N$.

	Consider $\alpha  < \kappa_N$. 
	In $V[G]$, let $D$ be the dense open set of conditions 
	in $\p_\beta$ which decide whether or not 
	$\alpha$ is in $\dot S_\beta$. 
	By the elementarity of $N[G]$, $D \in N[G]$. 
	Since $J^+$ is $N[G]$-generic, fix $w \in J^+ \cap D$. 
	Let $w' := \pi(w)$, which is in $\pi(D)$. 
	Since $\pi$ is an isomorphism and by absoluteness, 
	$w'$ decides in $\pi(\p_\beta)$ 
	whether or not $\pi(\alpha) = \alpha$ 
	is in $\pi(\dot S_\beta)$ the same way that $w$ 
	decides whether $\alpha$ is in $\dot S_\beta$.
	As $J$ and $J^+$ are filters, it easily follows that 
	$\alpha \in S$ iff 
	$w \Vdash_\beta^{V[G]} \alpha \in 
	\dot S_\beta$ iff 
	$w' \Vdash_{\pi(\p_\beta)}^{V[G \restrict \kappa_N]} 
	\alpha \in \pi(\dot S_\beta)$ iff 
	$\alpha \in \pi(\dot S_\beta)^J$. 
	Thus, $S = \pi(\dot S_\beta)^J$.
	
	By the choice of $\dot S_\beta$, we know that 
	in $V[G]$ the forcing poset $\p_\beta$ forces that 
	$\dot S_\beta$ does not reflect to any ordinal 
	in $\kappa$ with cofinality $\omega_1$. 
	Now $\kappa_N$ has cofinality $\omega_1$ in $V[G]$, 
	and by the recursion hypotheses $\omega_1$ is 
	preserved by $\p_\beta$. 
	Thus, $\p_\beta$ forces that there exists a club subset 
	of $\kappa_N$ with order type $\omega_1$ which is 
	disjoint from $\dot S_\beta \cap \kappa_N$. 
	Let $\dot c$ be a $\p_\beta$-name for such a club.

	By the first recursion hypothesis holding 
	for $\beta$, 
	$\p_\beta^*$ is $\omega_2$-distributive in $V[G]$. 
	By Proposition 2.4, we can find $t \le_\beta^* s$ 
	and a $\p_\beta^c$-name $\dot c_0$ such that 
	$t \Vdash_\beta \dot c = \dot c_0$. 
	By the maximality principle for names, 
	we may assume without 
	loss of generality that $\dot c_0$ is a 
	$\p_\beta^c$-name for a club 
	subset of $\kappa_N$ with order type $\omega_1$. 
	As $\p_\beta^c$ is $\omega_1$-c.c., we can find a 
	set $d$ in $V[G]$ which is a club subset of 
	$\kappa_N$ such that $\p_\beta^c$ forces that 
	$d \subseteq \dot c_0$. 
	Then 
	$t \Vdash_\beta d \cap \dot S_\beta = \emptyset$.
	
	We claim that $d \cap S = \emptyset$. 
	If not, then fix $\alpha \in d \cap S$. 
	By the definition of $S$, there exists 
	$u \in J^+$ which forces in $\p_\beta$ that 
	$\alpha$ is in $\dot S_\beta$. 
	Since $J^+ = \tau_\beta[K^+]$ by Lemma 3.7, 
	there is $(b,z) \in K^+$ such that $u = z + b$.
	
	By Lemma 3.7, $s \le_\beta^* z$. 
	So $t \le_\beta^* z$. 
	By Lemma 1.10(3), $t$ and $b$ are compatible in 
	$\p_\beta$ and 
	$t + b \le_\beta z + b = u$. 
	It follows that $t + b$ forces in $\p_\beta$ that 
	$\alpha \in \dot S_\beta$. 
	This is impossible since $\alpha \in d$ and 
	$t$ forces in $\p_\beta$ 
	that $d \cap \dot S_\beta = \emptyset$.

	So indeed $d \cap S = \emptyset$, and hence $S$ is a 
	nonstationary subset of $\kappa_N$ in the model $V[G]$. 
	Since $S = \pi(\dot S_\beta)^J$, 
	$S \in V[G \restrict \kappa_N][J]$. 
	As $V[G \restrict \kappa_N][J] \subseteq 
	V[G \restrict \kappa_N][K]$, 
	$S \in V[G \restrict \kappa_N][K]$. 
	But $V[G]$ is a generic extension 
	of $V[G \restrict \kappa_N][K]$ by 
	a proper forcing poset by Lemma 3.7. 
	Since $S$ is a set of ordinals of cofinality $\omega$, 
	$S$ must be nonstationary in 
	$V[G \restrict \kappa_N][K]$. 
	As $(a_0,p_0) \in K$, we can find 
	$(a,p) \le (a_0,p_0)$ in $K$ which forces in 
	$\pi(\p_\beta^c \otimes \p_\beta^*)$ that 
	$\pi(\dot S_\beta)$ is nonstationary in $\kappa_N$, 
	which completes the proof.
\end{proof}

We now verify the first recursion hypothesis for $\beta$, 
which will finish the proof of the 
consistency result.

\begin{proposition}
	Let $\beta < \kappa^+$, and assume that the 
	first and second recursion hypotheses hold for 
	all $\gamma < \beta$  
	and the second recursion hypothesis 
	holds for $\beta$. 
	Then $\p_\beta^*$ is $\omega_2$-distributive.
	\end{proposition}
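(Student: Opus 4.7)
The plan is a master-condition argument built on Lemma 3.7. It suffices to show that the intersection of any $\omega_1$ many dense open subsets of $\p_\beta^*$ is dense. So fix $p\in\p_\beta^*$ and a sequence $\langle D_i:i<\omega_1\rangle$ of dense open subsets of $\p_\beta^*$ in $V[G]$; the goal is to produce $q\le_\beta^* p$ with $q\in\bigcap_{i<\omega_1}D_i$.

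In the ground model, pick a $\beta$-suitable $N$ containing $\R_\kappa$-names for $p$ and for the sequence. Since $\kappa_N$ is inaccessible and $N^{<\kappa_N}\subseteq N$, one has $\omega_1\subseteq N$, so each $D_i$ lies in $N[G]$. Apply Lemma 3.7 to $(p\restrict\even,\,p)\in\p_\beta^c\otimes\p_\beta^*$: this yields an $N[G]$-generic filter $K^+$ on $N[G]\cap(\p_\beta^c\otimes\p_\beta^*)$ containing $(p\restrict\even,\,p)$, together with a condition $s\in\p_\beta$ such that $s\le_\beta^* q$ for every $(b,q)\in K^+$. In particular $s\le_\beta^* p$.

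To check that $s\in D_i$ for each $i<\omega_1$, lift $D_i$ to $D_i':=\{(b,q)\in\p_\beta^c\otimes\p_\beta^*:q\in D_i\}$. Because $\le_\beta^*$ preserves the even coordinate, given any $(b,p')\in\p_\beta^c\otimes\p_\beta^*$ one can pick $q\le_\beta^* p'$ in $D_i$ to form $(b,q)\in D_i'$ extending $(b,p')$; so $D_i'$ is dense in $\p_\beta^c\otimes\p_\beta^*$, and clearly $D_i'\in N[G]$. By $N[G]$-genericity of $K^+$ there exists $(b,q)\in K^+\cap D_i'$, whence $s\le_\beta^* q\in D_i$, and openness of $D_i$ under $\le_\beta^*$ gives $s\in D_i$. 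Thus $s\in\bigcap_{i<\omega_1}D_i$ with $s\le_\beta^* p$, establishing the distributivity.

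The substantive work is encapsulated in Lemma 3.7 and, through it, in Proposition 2.9 applied inside $V[G\restrict\kappa_N]$ to $\pi(\p_\beta^{*,s})$; the $\kappa_N$-closed dense subset produced there is precisely the ingredient that lets one thread a single $\le_\beta^*$-lower bound $s$ through all the $N[G]$-generic conditions simultaneously. The chief conceptual point, and the main obstacle already absorbed into Lemma 3.7, is that the second recursion hypothesis at $\beta$ supplies enough closure on $\pi(\p_\beta^{*,s})$ in the intermediate model to imitate the standard Mitchell-style master-condition construction; once $s$ is in hand, the rest is a routine combination of density and $N[G]$-genericity of $K^+$.
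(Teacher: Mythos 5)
Your proof is correct and follows essentially the same route as the paper's: both invoke Lemma 3.7 at $(p\restrict\even,p)$ to obtain the $N[G]$-generic filter $K^+$ and the master condition $s$, and then use genericity of $K^+$ against dense subsets of $\p_\beta^c\otimes\p_\beta^*$ lying in $N[G]$ to conclude that $s$ works. The only difference is cosmetic: the paper phrases $\omega_2$-distributivity via deciding $\omega_1$ many names for ordinals, while you phrase it via meeting $\omega_1$ many dense open sets, and your lifted sets $D_i'$ play exactly the role of the paper's sets $D$ of pairs $(b,q)$ with $q$ deciding $\dot\alpha_i$.
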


\begin{proof}
	Assume that $p \in \p_\beta$ forces 
	in $\p_\beta^*$ that 
	$\langle \dot \alpha_i : i < \omega_1 \rangle$ 
	is a sequence of ordinals. 
	We will find $q \le_\beta^* p$ which decides 
	in $\p_\beta^*$ the value of 
	$\dot \alpha_i$ for all $i < \omega_1$, and hence 
	forces that this sequence is in the ground model.
	
	Fix a $\beta$-suitable model $N$ such that $N[G]$ 
	contains $p$ and $\langle \dot \alpha_i : i < \omega_1 \rangle$, 
	and let $\pi$ be the transitive collapsing map of $N[G]$. 
	Fix $K$, $J$, $K^+$, $J^+$, and $s$ as in Lemma 3.7, 
	where $\pi(p \restrict \even,p) \in K$. 
	Then $(p \restrict \even,p) \in K^+$.
	
	Let $i < \omega_1$, and we will show that 
	$s$ decides the value of $\dot \alpha_i$. 
	Let $D$ be the set of 
	$(b,q) \in \p_\beta^c \otimes \p_\beta^*$ below 
	$(p \restrict \even,p)$ such that 
	$q$ decides in $\p_\beta^*$ the value of 
	$\dot \alpha_i$. 
	Then $D \in N[G]$ by elementarity, and easily 
	$D$ is dense below $(p \restrict \even,p)$. 
	Since $K^+$ is $N[G]$-generic and contains 
	$(p \restrict \even,p)$, fix $(b,q) \in D \cap K^+$. 
	Then by Lemma 3.7, $s \le_\beta^* q$. 
	Since $q \in D$, 
	$q$ decides the value of $\dot \alpha_i$. 
	So $s$ decides the value of $\dot \alpha_i$.
	\end{proof}

\smallskip

\begin{center}\textsc{Postscript}\end{center}

\smallskip

After this article was completed, I.\ Neeman discovered a shorter proof of the consistency of stationary set 
reflection at $\omega_2$ together with an 
arbitrarily large continuum. 
Specifically, starting with a model in which stationary 
set reflection holds, adding any number of Cohen reals 
preserves stationary set reflection 
(see \cite[Theorem 3.1]{jk35}). 
This new proof is, however, somewhat 
limited in its applications. 
For example, in the model of Harrington and Shelah \cite{HS}, 
there exists a special $\omega_2$-Aronszajn tree, and that 
fact cannot be changed by Cohen forcing. 
In contrast, the methods developed in this paper can be 
used to construct models of stationary set reflection 
together with a variety of combinatorial properties, 
such as the non-existence of special 
$\omega_2$-Aronszajn trees and the failure of 
the weak Kurepa hypothesis. The details will be 
worked out in the first author's upcoming 
Ph.D.\ dissertation.

\bibliographystyle{plain}
\bibliography{paper32}

\end{document}